\numberwithin{equation}{section}
\newtheorem{theorem}{Theorem}[section]
\newtheorem{lemma}[theorem]{Lemma}
\newtheorem{prop}[theorem]{Proposition}
\newtheorem{corollary}[theorem]{Corollary}
\theoremstyle{definition}
\newtheorem{defn}[theorem]{Definition}
\newtheorem{example}[theorem]{Example}
\newtheorem{remark}[theorem]{Remark}
\newcommand{\nc}{\newcommand}
\nc{\DMO}{\DeclareMathOperator}	
\nc{\newnotation}{\nomenclature}
\nc{\wrap}{\cW}
\nc{\wf}{\mathsf{WF}}
\nc{\inff}{\mathsf{InfF}}
\nc{\shv}{\mathsf{Shv}}
\nc{\pwr}{\mathsf{PWr}}
\nc{\Cob}{\mathsf{Cob}}
\nc{\mul}{\mathsf{Mul}}
\nc{\fat}{\mathsf{fat}}
\nc{\cob}{\mathsf{Cob}}
\nc{\coh}{\mathsf{Coh}}
\nc{\sets}{\mathsf{Sets}}
\nc{\Exit}{\mathsf{Exit}}
\nc{\near}{\mathsf{near}}
\nc{\sing}{\mathsf{Sing}}
\nc{\symp}{\mathsf{Symp}}
\nc{\perf}{\mathsf{Perf}}
\nc{\calg}{\mathsf{CAlg}}
\nc{\ssets}{\mathsf{sSets}}
\nc{\cmpct}{\mathsf{cmpct}}
\nc{\pwrap}{\mathsf{PWrap}}
\nc{\coder}{\mathsf{Coder}}
\nc{\bimod}{\mathsf{Bimod}}
\nc{\grmod}{\mathsf{GrMod}}
\nc{\spaces}{\mathsf{Spaces}}
\nc{\pwrms}{\mathsf{PWrFuk}_{M,S}}
\nc{\pwrmf}{\mathsf{PWrFuk}_{M,F}}
\nc{\pwrapmf}{\mathsf{PWrFuk}_{M,F}}
\nc{\fuk}{\mathsf{Fukaya}}
\nc{\infwr}{\mathsf{InfWr}}
\nc{\fukaya}{\mathsf{Fukaya}}
\nc{\autml}{\mathsf{Aut}_{M,\Lambda}}
\nc{\fukml}{\mathsf{Fukaya}_{M,\Lambda}}
\nc{\fukmle}{\mathsf{Fukaya}_{M,\Lambda,\epsilon}}
\nc{\fukmod}{\fun(\mathsf{Fukaya}_{M,\Lambda}^{\op},\chain_\ZZ)}
\nc{\lag}{\mathsf{Lag}}
\nc{\lagm}{\lag_M}
\nc{\lagml}{\lag_{M,\Lambda}} % For when I get lazy.
\nc{\lagmle}{\lag_{M,\Lambda,\epsilon}}
\nc{\fun}{\mathsf{Fun}}
\nc{\vect}{\mathsf{Vect}}
\nc{\chain}{\mathsf{Chain}}
\nc{\morita}{\mathsf{Morita}}
\nc{\wrfuk}{\mathsf{WrFukaya}}
\nc{\pwrfuk}{\mathsf{PWrFukaya}}
\nc{\inffuk}{\mathsf{InfFuk}}
\nc{\pwrfukml}{\mathsf{PWrFukaya}_{M,\Lambda}}
\nc{\inffukml}{\mathsf{InfFuk}_{M,\Lambda}}
\nc{\corres}{\mathsf{Corres}}
\nc{\cat}{\mathsf{Cat}}
\nc{\fukep}{\fukaya_\Lambda(M,\epsilon)}
\nc{\fukepop}{\fukaya_\Lambda(M,\epsilon)^{\op}}
\nc{\lagep}{\lag_\Lambda(M,\epsilon)}
\DMO{\cyl}{cyl} % Cylindrical
\nc{\dbcoh}{D^b\mathsf{Coh}}
\nc{\corr}{\mathsf{Corr}}
\nc{\sympdiscrete}{\underline{\mathsf{Symp}}}
\nc{\cdgadiscrete}{\underline{\mathsf{cdga}}}
\nc{\compact}{\mathsf{Compact}}
\nc{\categories}{\mathsf{Cats}}
\nc{\ainfty}{\mathsf{A}_\infty}
\nc{\inftycat}{\mathcal{C}\!\operatorname{at}_\infty}
\nc{\Ainftycat}{\mathcal{C}\!\operatorname{at}_{A_\infty}}
\nc{\ainftycat}{\mathcal{C}\!\operatorname{at}_{A_\infty}}
\nc{\stablecat}{\mathcal{C}\!\operatorname{at}_\infty^{\Ex}}
\DMO{\fib}{fib}
\DMO{\conf}{Conf}
\DMO{\chains}{Chains}
\DMO{\cochains}{Cochains}
\DMO{\cone}{Cone}
\DMO{\ran}{Ran}
\DMO{\leg}{Leg}
\DMO{\imm}{imm}
\DMO{\adj}{adj}
\DMO{\cube}{Cube}
\DMO{\flow}{Flow}
\DMO{\floer}{Floer}
\DMO{\maps}{Maps}
\DMO{\grph}{graph}
\DMO{\exact}{exact}
\DMO{\Decomp}{Decomp}
\DMO{\decomp}{Decomp}
\DMO{\yoneda}{Yoneda}
\DMO{\hamspace}{Ham}
\DMO{\sympspace}{Symp}
\DMO{\holomaps}{Holomaps}
\DMO{\comp}{Comp}
\DMO{\crit}{Crit}
\DMO{\test}{{test}}
\DMO{\sign}{sign}
\DMO{\topp}{top}
\DMO{\indx}{Index}
\DMO{\Break}{Break} % Partitions
\DMO{\zero}{zero} %Zero
\DMO{\ob}{Ob}
\DMO{\gr}{Gr} % Grassmanian
\DMO{\Gr}{Gr} % Grassmanian
\DMO{\cl}{Cl} % Clifford Algebra
\DMO{\grlag}{GrLag}
\DMO{\Pin}{Pin}
\DMO{\Graph}{Graph}
\DMO{\pin}{Pin}
\DMO{\gap}{Gap}
\DMO{\Ex}{Ex}
\DMO{\id}{id}
\DMO{\End}{End}
\DMO{\sym}{Sym} 
\DMO{\aut}{Aut}
\DMO{\DK}{DK} %Dold-Kan
\DMO{\poly}{poly} % Polynomial deRham forms
\DMO{\diff}{Diff}
\DMO{\coll}{coll}
\DMO{\dist}{dist} %Distance function
\DMO{\coker}{coker} %Cokernel
\nc{\kernel}{\ker} %Kernel
\DMO{\sspan}{span}
\DMO{\hocolim}{hocolim}	
\DMO{\holim}{holim}
\DMO{\sk}{sk}
\DMO{\ev}{ev}
\DMO{\im}{im}
\DMO{\ho}{ho}
\DMO{\fin}{fin}
\DMO{\ret}{Ret}
\DMO{\ham}{Ham}
\DMO{\con}{con}
\DMO{\leaf}{leaf}
\DMO{\supp}{supp}
\DMO{\edge}{edge}
\DMO{\colim}{colim}
\DMO{\edges}{edges}
\DMO{\Image}{image}
\DMO{\roots}{roots}
\DMO{\height}{height}
\DMO{\finmod}{FinMod}
\DMO{\leaves}{leaves}
\DMO{\planar}{planar}
\DMO{\vertices}{vertices}
\nc{\lagg}{\lag^{\cG}}
\nc{\iso}{\mathsf{Iso}}
\nc{\bsc}{\mathcal{B}\mathsf{sc}}
\nc{\Set}{\mathsf{Set}}
\nc{\ass}{\mathsf{ \bf Ass}}
\nc{\Mod}{\mathsf{Mod}}
\nc{\exit}{\mathsf{Exit}}
\nc{\sset}{\mathsf{sSet}}
\nc{\liou}{\mathsf{Liou}}
\nc{\poset}{\mathsf{Poset}}
\nc{\trno}{T^*\RR^n_{\geq 0}}
\nc{\spectra}{\mathsf{Spectra}}
\nc{\tensorfin}{\tensor^{\fin}}
\nc{\lagptg}{\lag_{pt,pt}^{\cG}}
\nc{\Fin}{\mathcal{F}\mathsf{in}}
\nc{\lagnl}{\lag_{N,\Lambda}}
\nc{\lagmlg}{\lag_{M,\Lambda}^{\cG}}
\nc{\lagsplit}{\lag^{\mathsf{split}}}
\nc{\lagktimes}{(\lag^{\dd k})^\times}
\nc{\lagplanar}{\lag^{\times,\planar}}
\nc{\dlambda}{d^\Lambda}
\nc{\smsh}{\wedge}
\nc{\un}{\underline}
\nc{\xto}{\xrightarrow}
\nc{\xra}{\xto}
\nc{\tensor}{\otimes}
\nc{\del}{\partial}
\nc{\dd}{\diamond}
\nc{\tri}{\triangle}
\nc{\bb}{\Box}
\nc{\into}{\hookrightarrow}
\nc{\contains}{\supset}
\nc{\immto}{\looparrowright}
\nc{\transverse}{\pitchfork}
\nc{\uncirc}{\underline{\circ}}
\nc{\Jbar}{\overline{J}}
\nc{\Fbar}{\overline{F}}
\nc{\delbar}{\overline{\del}}
\nc{\thetabar}{\overline{\theta}}
\nc{\omegabar}{\overline{\omega}}
\nc{\colldiff}{\diff^{\del}} 
\nc{\trbar}{\overline{T^*\RR}}
\nc{\tr}{T^*\RR}
\nc{\tsa}{Ts\cA}
\nc{\tsb}{Ts\cB}
\nc{\cmbar}{\overline{\cM}}
\nc{\crbar}{\overline{\cR}}
\nc{\vece}{ {\vec \epsilon}}	
\nc{\vecd}{ {\vec \delta}}
\nc{\ov}{\overline}
\DMO{\op}{op}
\nc{\opp}{ ^{\op}}
\nc{\eqn}{\begin{equation}}
\nc{\eqnn}{\begin{equation*}}
\nc{\eqnnd}{\end{equation*}}
\nc{\eqnd}{\end{equation}}
\nc{\enum}{\begin{enumerate}}
\nc{\enumd}{\end{enumerate}}
\def\cA{\mathcal A}\def\cB{\mathcal B}\def\cC{\mathcal C}\def\cD{\mathcal D}
\def\cF{\mathcal F}\def\cG{\mathcal G}\def\cH{\mathcal H}
\def\cJ{\mathcal J}\def\cL{\mathcal L}
\def\cM{\mathcal M}\def\cO{\mathcal O}
\def\cR{\mathcal R}
\def\cV{\mathcal V}\def\cW{\mathcal W}
\def\RR{\mathbb R}
\def\ZZ{\mathbb Z}
\def\sA{\mathsf A}
\DMO{\prim}{Prim}
\nc{\LS}{\textcolor{magenta}}
\nc{\w}{\wedge}
\nc{\dpp}{\partial_+}
\nc{\dpm}{\partial_-}
\nc{\dl}{d^{\Lambda}}
\nc{\om}{\omega}
\nc{\Om}{\Omega}
\nc{\Alg}{\mathsf{Alg}}
\nc{\cdga}{\mathsf{cdga}}
\nc{\open}{\mathsf{Open}}
\nc{\Span}{\mathsf{Span}}
\nc{\dmfld}{\mathsf{dMfld}}
\nc{\Cospan}{\mathsf{Cospan}}
\DMO{\spec}{Spec}
\nc{\pa}{\partial}
\nc{\paa}{\partial'}
\nc{\pab}{\partial''}
\nc{\thp}{{\theta_{2p+1}}}
\nc{\cpF}{{\cF_p}}
\nc\ddp{{d_+}}
\nc\ddm{{d_-}}
\nc\rstar{{\ast_r}}
\nc\gstar{{\ast_g}}
\def\pip{\Pi^p}
\def\pips{{{\Pi^{p}}^*}}
\nc\FO[1]{F^{p}\Omega^{#1}}
\nc\BFO[1]{\overline{{F^{p}\Omega^{#1}}}}
\def\xa{x_1}
\def\xb{x_2}
\def\xc{x_3}
\def\xd{x_4}
\def\ea{e_1}
\def\eb{e_2}
\def\ec{e_3}
\def\ed{e_4}
\begin{document}

\title{Odd Sphere Bundles, Symplectic Manifolds, and their Intersection Theory}
\author{Hiro Lee Tanaka}
\address{Department of Mathematics, Harvard University, Cambridge, MA 02138}
\email{hirolee@math.harvard.edu}
\author{Li-Sheng Tseng}
\address{Department of Mathematics, University of California, Irvine, CA 92697}
\email{lstseng@math.uci.edu}

\begin{abstract}
Recently, Tsai-Tseng-Yau constructed new invariants of symplectic manifolds: a sequence of $A_\infty$-algebras built of differential forms on the symplectic manifold. We show that these symplectic $A_\infty$-algebras have a simple topological interpretation. Namely, when the cohomology class of the symplectic form is integral, these $A_\infty$-algebras are equivalent to the standard de Rham differential graded algebra on certain odd-dimensional sphere bundles over the symplectic manifold.  From this equivalence, we deduce for a closed symplectic manifold that Tsai-Tseng-Yau's symplectic $A_\infty$-algebras satisfy the Calabi-Yau property, and importantly, that they can be used to define an intersection theory for coisotropic/isotropic chains.  We further demonstrate that these symplectic $A_\infty$-algebras satisfy several functorial properties and lay the groundwork for addressing Weinstein functoriality and invariance in the smooth category.
\end{abstract}

%Abstract for arXiv metadata

%Recently, Tsai-Tseng-Yau constructed new invariants of symplectic manifolds: a sequence of Aoo-algebras built of differential forms on the symplectic manifold. We show that these symplectic Aoo-algebras have a simple topological interpretation. Namely, when the cohomology class of the symplectic form is integral, these Aoo-algebras are equivalent to the standard de Rham differential graded algebra on certain odd-dimensional sphere bundles over the symplectic manifold.  From this equivalence, we deduce for a closed symplectic manifold that Tsai-Tseng-Yau's symplectic Aoo-algebras satisfy the Calabi-Yau property, and importantly, that they can be used to define an intersection theory for coisotropic/isotropic chains.  We further demonstrate that these symplectic Aoo-algebras satisfy several functorial properties and lay the groundwork for addressing Weinstein functoriality and invariance in the smooth category.

\maketitle

\tableofcontents

\section{Introduction}
Recently, Tsai-Tseng-Yau~\cite{tsai-tseng-yau} introduced
a family of algebras of differential forms for any symplectic manifold $M$.   These algebras---denoted $\cpF(M)$, with parameter $p$ taking integral values from zero to $\dim M /2$---have several notable properties.  First, each $\cpF(M)$ is an $A_\infty$-algebra. Second, the differential of the algebra consists of both first- and second-order differential operators, while the product interestingly involves derivatives.  Finally, their cohomologies, $F^pH^*(M)$, are all finite-dimensional---they were named the {\em $p$-filtered cohomologies} of $M$ for short~\cite{tsai-tseng-yau} .   These filtered cohomology rings intrinsically depend on, and can vary with, the symplectic structure $\om$ \cite{tseng-yau-2, tsai-tseng-yau}.

In this paper, we equate the algebras $\cpF$ (in the $A_\infty$-algebra sense) to a standard de Rham differential graded algebra---not on the original symplectic manifold $M$, but on certain odd-dimensional sphere bundles over $M$. This perspective turns out to give a simple, topological description of $\cpF$, and makes manifest several useful properties of $\cpF$ (including some functorial properties).  

We can motivate the existence of such an alternative description for $\cpF$ on $(M^{2n}, \om)$ from a broader point of view. For a symplectic manifold $M$, the symplectic structure and its powers constitute a distinguished set of forms:
\begin{align*}
\left\{\om, \om^2, \ldots, \om^n\right\} \in \Omega^*(M)\,.
\end{align*}
Special forms also arise in geometry as representatives of characteristics classes of bundles over $M$.  So we consider bundles over $M$ with characteristics classes given by powers of $\om$.

Explicitly, for the distinguished two-form $\om$, consider the line bundle $L$ over $M$ with $c_1 = (1/2\pi)\, \om$.  This $L$ is the well-known prequantum line bundle in geometric quantization, and has an associated circle bundle with Euler class $e=\om$.  More generally, for the $2(p+1)$-form $\om^{p+1}$, we have the associated direct sum vector bundle $L^{\oplus\, p+1}$, and we will focus in particular on the associated closed $S^{2p+1}$ bundle over $M$:
\begin{align}
	\xymatrix{
	S^{2p+1} \ar[r] & E_{p} \ar[d]\\
	& (M^{2n}, \om)
	}
\end{align}
with its Euler class given by $\om^{p+1}$.   
Given that $E_p$ is derived from the symplectic form $\omega$, one can ask how the topology of $E_p$ relates to the symplectic geometry of $M^{2n}$:
\begin{align*}
\xymatrix{
&\fbox{Topological data of $E_p$}\qquad \ar@{<-->}[rr]^-*+\txt{?} & & \qquad \fbox{Symplectic data of $(M^{2n}, \om)$}
}
\end{align*}
\noindent Specifically,
\begin{itemize}
\item[(1)] What symplectic data are encoded in the topology of the odd sphere bundle $E_p$?  
\item[(2)] Can we use the topology of these sphere bundles to gain new insights  into the known symplectic invariants on $(M^{2n}, \om)$?
\end{itemize}

In this paper, we begin to address such questions by analyzing the cohomology and the differential topology of $E_p$.  First, the real cohomology of the sphere bundle---equivalently, its de Rham cohomology---is well-known to fit into the Gysin sequence: 
\begin{align}\label{triangled}
	\xymatrix{
	& H^*(E_p)\ar[dl] &\\
	H^*(M) \ar[rr]^{\w \om^{p+1}}& &H^*(M)  \ar[ul]
	}
\end{align}
where again $\om^{p+1}$ is the Euler class of $E_p$.  This exact triangle identifies $H^*(E_p)$ non-canonically as the direct sum of kernels and cokernels of the $\om^{p+1}$ map between the de Rham cohomologies of $M$.  Coincidentally, exactly the same triangle diagram also appeared in \cite{tsai-tseng-yau} for the symplectic $p$-filtered cohomologies of differential forms, $F^pH^*(M)$; so we can quickly conclude from \eqref{triangled} that 
\begin{align}\label{HEF}
H^*(E_p) &\cong \coker[\om^{p+1}: H^*(M)\to H^*(M)] \oplus \ker[\om^{p+1}: H^*(M) \to H^*(M)] \\
&\cong F^pH^*(M)\,. \nonumber
\end{align}    

Of course, to construct an honest sphere bundle $E_p$, the Euler class $e=\om^{p+1}$ must be an integral class, i.e. an element of $H^{2p+2}(M, \ZZ)$. In the case the cohomology class is not integral, we should instead consider $E_p$ to be a fibration of rational spheres, in the sense of rational homotopy theory. However, from the perspective of the above two questions, the integral condition will often not be relevant.  For instance, the resulting cohomology on $E_p$ still fits into a triangle as in \eqref{HEF}, just as $F^pH^*(M)$ does regardless of the integrality of $\omega$.

Beyond the cohomology of $E_p$, we can also consider the de Rham algebra of $E_p$.  When $[\omega]$ is integral, a standard application of the \v{C}ech-de Rham double complex shows that
	\eqnn
		\Omega(E_p) \simeq \cone(\omega^{p+1})
	\eqnnd
where $\cone(\omega^{p+1})$ is the mapping cone of the map between de Rham chain complexes:
	\eqnn		
	\wedge \omega^{p+1}:	\Omega^\bullet(M)[-2p-2] \to \Omega^\bullet(M)\,.
	\eqnnd
We refer the interested reader to the proof in the Appendix. 

The cone algebra $\cone(\omega^{p+1})$ is a commutative differential graded algebra (cdga).   It is also well-defined for any $\om$ whether integral or not.  At first glance, this cone cdga is very different from the $A_\infty$-algebra $\cpF$, but both fit into the same long exact sequence of cohomology groups (hence have isomorphic cohomology).  This raises the question whether $\cpF$ and $\cone(\omega^{p+1})$ represent two distinct symplectic algebras on $(M, \om)$.   We prove that they are in fact equivalent:

\begin{theorem}\label{thm.cone}	
For each $p$, there exists a natural equivalence of $A_\infty$-algebras
			\eqnn
				\cF_p \simeq \cone(\omega^{p+1})\,.
			\eqnnd
\end{theorem}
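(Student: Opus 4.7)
The plan is to construct an explicit $A_\infty$-quasi-isomorphism $\Phi = \{\phi_n\}_{n\ge 1}\colon \cone(\omega^{p+1}) \to \cF_p$. Since $\cone(\omega^{p+1})$ is a cdga---so its higher operations $\mu_n^{\cone}$ vanish for $n\ge 3$---the defining relations of an $A_\infty$-morphism simplify dramatically, and what remains is to produce a chain-level identification $\phi_1$ and then homotopies $\phi_n$, $n \ge 2$, that absorb the noncommutativity and higher $A_\infty$-operations of $\cF_p$.

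First, I would pin down the underlying chain complexes. Both $\cone(\omega^{p+1})$ and $\cF_p$ are naturally assembled from two graded copies of $\Omega^\bullet(M)$ linked by wedging with $\omega^{p+1}$; indeed, both fit into the same long exact sequence recorded in \eqref{HEF}. One writes down $\phi_1$ as the evident componentwise map between these two presentations, chosen so that on cohomology it realizes the isomorphism $H^*(\cone(\omega^{p+1})) \cong F^p H^*(M)$. Verifying that $\phi_1$ is a chain map reduces to checking that the TTY differential---built from $d$ and the symplectic adjoint $d^\Lambda$---matches the cone differential $(a,b) \mapsto (da + \omega^{p+1}\wedge b,\, -db)$ under this identification, up to sign conventions.

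Second, I would extend $\phi_1$ to the higher $\phi_n$. The $n=2$ $A_\infty$-morphism relation requires that $\phi_1\circ\mu_2^{\cone} - \mu_2^{\cF_p}\circ(\phi_1\otimes \phi_1)$ be a $d$-coboundary, with $\phi_2$ providing the primitive. Because the TTY product involves first-order differential operators, $\phi_2$ must itself be a natural bidifferential operator on forms, built from pairings of the two inputs against a primitive of $\omega^{p+1}$. For $n \ge 3$, since $\mu_n^{\cone} = 0$, the $A_\infty$-relations reduce to a recursive cochain equation for $\phi_n$ whose right-hand side is a known expression in $\phi_{<n}$; standard obstruction theory then produces $\phi_n$, using that the relevant obstruction group is already trivial by the Gysin-matching of cohomology in \eqref{HEF}. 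Once $\Phi$ is built, it is automatically an equivalence by the five-lemma applied to the two long exact sequences.

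The main obstacle will be the explicit construction of $\phi_2$ and the verification of the $n=2$ $A_\infty$-morphism relation: this is precisely the place where the derivative-valued TTY product must be reconciled with the strict wedge product on $\cone(\omega^{p+1})$, and $\phi_2$ must be natural enough in both forms and in $\omega$ that the higher obstructions can be solved uniformly. A conceptually clean route that bypasses much of the direct computation is to realize $\cF_p$ as the $A_\infty$-algebra obtained by homotopy transfer of the cdga structure on $\cone(\omega^{p+1})$ along an explicit deformation retract onto a quasi-isomorphic subcomplex of forms; the Kadeishvili/homological-perturbation formulas then yield $\Phi$ automatically, with higher components given by explicit tree sums and with naturality in $p$ and in $M$ built in.
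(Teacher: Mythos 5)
Your overall shape---a chain-level identification plus a correcting $\phi_2$, with the difficulty concentrated in reconciling the derivative-valued TTY product with the strict wedge product on the cone---is the right one and matches the paper in spirit. But there are two genuine gaps in how you propose to finish.

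First, the obstruction-theoretic argument for $n\ge 3$ does not work as stated. The obstruction to extending a partial $A_\infty$-morphism $(\phi_1,\dots,\phi_{n-1})$ to $\phi_n$ lives in a Hochschild-type cohomology group of the source with coefficients in the target (made a bimodule via $\phi_1$), not in the ordinary cohomology of either complex; the Gysin-sequence matching of $H^*(\cone(\omega^{p+1}))$ with $F^pH^*(M)$ in \eqref{HEF} says nothing about the vanishing of these groups, and in general they do not vanish. So ``standard obstruction theory'' does not hand you the $\phi_n$. Second, the homotopy-transfer route you offer as a clean alternative presupposes the content of the theorem: transfer along a deformation retract of $\cone(\omega^{p+1})$ produces \emph{some} $A_\infty$-structure on the subcomplex together with a quasi-isomorphism, but one must still verify that the transferred operations coincide with (or are $A_\infty$-equivalent to) the TTY operations $m^2_{\cF}$, $m^3_{\cF}$ already defined on $\cF_p$, and that the transferred $m^{\ge 4}$ vanish. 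That verification is essentially the whole theorem and is not supplied by citing the perturbation lemma.

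For comparison, the paper avoids both issues by building the morphism in the opposite direction, $g:\cF_p\to\cone(\omega^{p+1})$, with only two nonzero components: $g^1$ is the explicit chain-level map (part of a strong deformation retract $(f,g,G)$), and
\begin{equation*}
g^2=-\theta\, L^{-(p+1)}\, m^2_{\cC}(g^1\tensor g^1),\qquad g^l=0 \ \ (l\ge 3).
\end{equation*}
Since $m^k_{\cC}=0$ for $k\ge 3$, $m^k_{\cF}=0$ for $k\ge 4$, and $g^k=0$ for $k\ge 3$, the $A_\infty$-morphism identities reduce to exactly four equations ($n=1,2,3,4$), each checked by a finite computation hinging on the graded commutator $\{m^1_{\cC},\theta L^{-(p+1)}\}$ and the identity $\pip = 1 - L^{p+1}L^{-(p+1)}$. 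To salvage your plan you would need either to exhibit $\phi_2$ in closed form and prove the $\phi_{\ge 3}$ can be taken to vanish, or to carry out the homotopy transfer explicitly along the retract and check term by term that the tree sums reproduce the TTY operations.
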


Moreover, this equivalence suggests that $\cF_p$ should come with a non-degenerate pairing when $M$ is compact---after all, when $[\omega]$ is integral, $\Omega(E_p)$ has a non-degenerate pairing by Poincar\'e Duality. To that end, we prove:

\begin{theorem}
Each $A_\infty$-algebra $\cF_p$ is Calabi-Yau whenever $M$ is compact.
\end{theorem}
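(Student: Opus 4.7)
The plan is to use Theorem \ref{thm.cone} to reduce the claim to exhibiting a Calabi-Yau structure on the cdga $\cone(\omega^{p+1})$, and then to construct such a structure explicitly from the integration pairing on $M$. Writing the cone as the free extension $\Omega^\bullet(M)\otimes \Lambda[\epsilon]$, where $\epsilon$ is an odd generator of degree $2p+1$ satisfying $d\epsilon = \omega^{p+1}$, I would define a trace of degree $d := 2n+2p+1$ by
\[
\mathrm{tr}(\alpha + \beta\epsilon) := \int_M \beta,
\]
where on the right-hand side only the component of $\beta$ in top degree $2n$ contributes. Stokes' theorem together with $d\omega = 0$ yields immediately that $\mathrm{tr}\circ d = 0$, and cyclicity of the induced pairing $\langle x_1,x_2\rangle := \mathrm{tr}(x_1\cdot x_2)$ is automatic because $\cone(\omega^{p+1})$ is graded commutative.

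The heart of the argument is verifying that this pairing descends to a non-degenerate pairing on $H^\bullet(\cone(\omega^{p+1}))$. For this I would exploit the short exact sequence of complexes
\[
0 \to \Omega^\bullet(M) \to \cone(\omega^{p+1}) \to \Omega^\bullet(M)[-2p-1] \to 0,
\]
whose associated long exact sequence has connecting map $\wedge\,\omega^{p+1}$. Integration on $M$, which is non-degenerate because the compact symplectic manifold $M$ is canonically oriented by $\omega^n$, yields Poincar\'e duality isomorphisms $H^k(M)\xrightarrow{\sim} H^{2n-k}(M)^\vee$. A direct computation from the definition of $\mathrm{tr}$ shows that these Poincar\'e duality maps, together with the candidate duality $H^k(\cone(\omega^{p+1})) \to H^{d-k}(\cone(\omega^{p+1}))^\vee$, assemble into a map from the above long exact sequence into the linear dual of the same long exact sequence. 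The five lemma then concludes non-degeneracy of the pairing on cohomology, which is the Calabi-Yau property.

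The main obstacle is the sign bookkeeping required to make this comparison of long exact sequences strictly commute. In particular, one must verify that the connecting homomorphism $\wedge\,\omega^{p+1}\colon H^{k-2p-1}(M)\to H^{k+1}(M)$ agrees, up to the expected Koszul sign, with the Poincar\'e dual of the corresponding connecting homomorphism in the dual sequence. Once these signs are pinned down, the five lemma argument is immediate.

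As a sanity check, when $[\omega]\in H^2(M,\ZZ)$, the quasi-isomorphism $\Omega^\bullet(E_p)\simeq \cone(\omega^{p+1})$ recalled in the introduction identifies $\mathrm{tr}$ with integration over the compact oriented $(2n+2p+1)$-manifold $E_p$, and non-degeneracy becomes precisely Poincar\'e duality on $E_p$. The construction above is the chain-level avatar of this Poincar\'e duality that continues to make sense for arbitrary, not-necessarily-integral, symplectic forms.
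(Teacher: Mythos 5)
Your construction of a Calabi--Yau structure on the cdga $\cone(\omega^{p+1})$ is sound: the trace $\mathrm{tr}(\alpha+\theta\beta)=\int_M\beta$ kills exact elements by Stokes, is cyclic because $\cone(\omega^{p+1})$ is a strictly associative, graded-commutative algebra, and your five-lemma argument comparing the long exact sequence of $0\to\Omega(M)\to\cone(\omega^{p+1})\to\Omega(M)[-2p-1]\to 0$ with its linear dual via Poincar\'e duality on $M$ is a clean and correct way to get non-degeneracy on cohomology (arguably more elementary than the paper's route, which uses Hodge theory for the elliptic filtered complexes and the operator $\cR\cJ$ to produce explicit dual harmonic representatives). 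The gap is in the very first step: ``use Theorem~\ref{thm.cone} to reduce the claim.'' The Calabi--Yau property in Definition~\ref{CY-swap} is a \emph{strict} cyclic symmetry equation relating the pairing to the actual operations $m^1,m^2,m^3$ of $\cF_p$, for arbitrary (not necessarily closed) inputs. Such a strictly cyclic pairing does not transport formally across an $A_\infty$ quasi-isomorphism with nontrivial higher components. Concretely, the natural candidate is $\langle a,b\rangle_\cF:=-\langle g^1a,g^1b\rangle_\cC$, but the $n=2$ equation of Theorem~\ref{thm.Aoo-map} gives $g^1m^2_\cF=m^2_\cC(g^1\otimes g^1)+m^1_\cC g^2+g^2(m^1_\cF\otimes 1+1\otimes m^1_\cF)$, so $\langle m^2_\cF(a_1,a_2),a_3\rangle_\cF$ differs from the manifestly cyclic quantity $-\mathrm{tr}(g^1a_1\cdot g^1a_2\cdot g^1a_3)$ by correction terms involving $g^2$ and $m^1$; these do not vanish identically and must be dealt with by computation (or by passing to a homotopy-invariant notion of Calabi--Yau structure, which is not the definition in force here). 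The same issue recurs for $m^3_\cF$.

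The paper sidesteps this by never invoking the quasi-isomorphism in the proof: it defines the pairing intrinsically on $\cF$ as $\langle a_1,a_2\rangle=\int_M\ast_r\,m^2(a_1,a_2)$, proves non-degeneracy by Hodge theory for the elliptic complexes of Theorem~\ref{ellipticC}, and then verifies the cyclic symmetry equations for $l=1,2,3$ directly --- using Stokes's theorem, the identity $\int_M\Lambda\eta\wedge\eta'=\int_M\eta\wedge\Lambda\eta'$, and, for $l=2$, integrating the $A_\infty$ relation involving $m^3$ and showing the resulting terms vanish by the $p$-filtration condition. To repair your argument you would either need to carry out the analogous direct verification of cyclicity for the pullback pairing on $\cF_p$ (at which point the reduction to $\cC$ buys you the non-degeneracy and the $l=1$ case, but not the hard part), or reformulate the theorem in a homotopy-invariant language, which is not what is being claimed. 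Note also that your proposal's reliance on Theorem~\ref{thm.cone} is otherwise consistent with the non-integral case, since $\cone(\omega^{p+1})$ and your trace are defined for any symplectic form.
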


In other words, each $\cF_p$ admits a cyclically invariant pairing, and this theorem holds for all symplectic structures on $M$.

At this point, two natural questions arise from these theorems, whose answers we begin to address in this paper, and which we plan to deepen in future work.

First: Is there an intersection theory associated to each $\cF_p$? After all, common examples of Calabi-Yau algebras are differential forms on compact, oriented manifolds, where the Calabi-Yau pairing encodes the intersection theory of oriented submanifolds. Since the sequence of Calabi-Yau algebras $\cF_p$ depends on the choice of $\omega$, an answer to our question amounts to looking for an intersection theory tailored to symplectic manifolds---and one not involving holomorphic curve theory.

In Section~\ref{section.intersection}, we begin to interpret the intersection pairing when $\omega$ is an integral cohomology class and $p=0$. The interpretation is as follows: Consider the contact circle bundle $E$ associated to the line bundle classified by $\omega$. Given an isotropic $I$ inside $M$, one can lift $I$ to a section $\tilde I$ of $E$ because $\omega|_I = 0$. As it turns out, isotropics naturally pair with coisotropics $C \subset M$, but $C$ may have some non-trivial, symplectic boundary. (This is a manifestation of the fact that the Poincar\'e dual form of $C$ may not be closed under the de Rham differential.) Regardless, our work suggests that $C$ lifts to a submanifold $\tilde C \subset E$ in the circle bundle which no longer has boundary. Then the intersection of $\tilde I$ with $\tilde E$---hence the usual intersection pairing on $E$---is the Calabi-Yau pairing on $\cF_0\,$.

Second: What formal properties do the $\cF_p$ satisfy? For instance, what kinds of symplectic morphisms induce maps between them?

We prove that the algebras $\cF_p$ enjoy several formal properties in Section~\ref{section.functoriality}. For instance, Theorem~\ref{thm.cone} implies that $\cF_p$---as an $A_\infty$-algebra---is only an invariant of the cohomology class of $\omega$, rather than $\omega$ itself. Second, since there are natural quotient maps  $\cone(\omega^{p+1}) \to \cone(\omega^{p})$, one obtains a sequence of $A_\infty$-algebra maps that go in a reverse direction to the  filtration that defines $\cpF$:
	\eqn\label{eqn.algebra-maps}
		\ldots \to \cF_p \to \cF_{p-1} \to \ldots \to \cF_0.
	\eqnd 
This sequence of algebra maps suggests that the $\cF_p$ can be given a simple interpretation in terms of derived geometry---they encode higher order neighborhoods around the subvariety of $\Omega(M)$ determined by $\omega$. 

Furthermore, we begin to develop the functorial properties of the assignment $M \mapsto \cone(\omega^\bullet)$. For instance, this assignment is functorial with respect to maps $M \to M'$ that respect the symplectic form. Moreover, the assignment $M \mapsto \cF_p(M)$ forms a sheaf of cdgas on the site of symplectic manifolds with open embeddings. Finally, we begin to see the seeds of Weinstein functoriality: For any Lagrangian submanifold $L \subset (M_1 \times M_2, -\omega_1 \oplus \omega_2)$, one can define a cdga $\cF_p(L)$ receiving an algebra map from each of the $\cF_p(M_i)$---in particular, each Lagrangian correspondence defines a bimodule for the algebras $\cF_p(M_i)$. It will be a subject of later work to show that this assignment sends Lagrangian correspondences to tensor products of bimodules.

{\bf Acknowledgments.}
We thank Kevin Costello, Rune Haugseng, Nitu Kitchloo, Si Li, Richard Schoen, Xiang Tang, Chung-Jun Tsai, and Shing-Tung Yau for helpful discussions.  This research is supported in part by National Science Foundation under Award No. DMS-1400761 of the first author and a Simons Collaboration Grant of the second author.  We are also grateful for the hospitality of the Perimeter Institute for Theoretical Physics and Harvard University's Center for Mathematical Sciences and Applications while this research took place. Research at Perimeter Institute is supported by the Government of Canada through the Department of Innovation, Science and Economic Development and by the Province of Ontario through the Ministry of Research and Innovation.

\section{Recollections}
Here we recall the constructions and results from \cite{tseng-yau}, \cite{tseng-yau-2}, \cite{tsai-tseng-yau}. This will allow us to set some notation, and to recall the $A_\infty$-operations for the algebras $\cF_p$. (Here, $p$ can be any integer between $0$ and $n$, inclusive.)

\subsection{Form decomposition}
Fix a $2n$-dimensional symplectic manifold $(M^{2n},\omega)$. Since $\omega$ defines an isomorphism between 1-forms and vector fields, $\omega$ itself is identified with a bivector field (the Poisson bivector field), under this isomorphism.  There are three basic operations on de Rham forms on $M$:
\begin{enumerate}
	\item[(1)]
		$L: \Omega^k \to \Omega^{k+2}$ sends $\eta \mapsto \omega \wedge \eta$.
	\item[(2)] 
		$\Lambda: \Omega^k \to \Omega^{k-2}$ sends $\eta \mapsto \Lambda \eta$, the interior product with the Poisson bivector field.
	\item[(3)]
		$H: \Omega^k \to \Omega^k$ sends a $k$-form $\eta$ to $(n-k)\eta$.
\end{enumerate}
    
Together, $\{L, \Lambda, H\}$ generate an $sl_2$ Lie algebra acting on forms.  
The highest weight forms under this action are called {\em primitive} forms, whose space we shall denote by $P^{\bullet}$.  If $\beta_k \in P^k$ is a primitive $k$-form, then its interior product with the Poisson bivector field vanishes, i.e. $\Lambda\, \beta_k = 0$.  As usual with $sl_2$ representations, any $k$-form $\eta_k\in\Omega^k$  can be expressed as a sum of primitive forms wedged with powers of $\omega$: 
	\eqnn
	\eta_k = \sum_{k=2j+s} L^j \beta_s = \beta_k + L \beta_{k-2} + \ldots + L^p \beta_{k-2p} +  \ldots 
	\eqnnd
This Lefschetz decomposition of $\Omega^\bullet$ can be arranged in a ``pyramid" diagram as shown in Figure \ref{figure.triangle}.  (This can be compared to the well-known $(p,q)$ ``diamond" of forms on complex manifolds.)  Associated with this Lefschetz decomposition, we define four more operations:
\begin{enumerate}
\item[(4)] $L^{-p}$: the negative power of $L$ with $p>0$.  Roughly, it is the ``inverse" of $L^p$ and removes $\om^p$ from each term of the decomposition.  Explicitly, 
	\eqnn
	L^{-p} \eta_k =  \sum_{k=2j+s} L^{-p}L^j \beta_s  =  \beta_{k-2p} + L \beta_{k-2p-2} + \ldots 
	\eqnnd
\item[(5)] $\rstar\,$: the natural reflection action about the central axis of the pyramid diagram in Figure \ref{figure.triangle}.  The operation is explicitly defined on each term of the Lefschetz decomposition by
	\eqn\label{eqn.star}
	\ast_r(L^j \beta_s) = L^{n-j-s}\beta_s\,.
	\eqnd
	It is worthwhile to note that in terms of $\rstar$, 
	\eqnn
	L^{-p} = \rstar \, L^p \, \rstar\,,
	\eqnnd
	that is, $L^{-p}$ is the $\rstar$ conjugate of $L^p$.  Also, acting on $\Om$, we have $\rstar\, \Om^k = \Om^{2n-k}$ as expected.  
\item[(6)] $\pip$: a projection operator that keeps only terms up to the $p$th power of $\omega$ in the Lefschetz decomposition:
\eqn\label{projp}
\pip \eta_k = \sum_{j=0}^{p} L^j \beta_s = \beta_k + \ldots + L^p \beta_{k-2p}
\eqnd

Alternatively, $\pip$ can be defined as
	\eqnn
	\Pi^p = 1 - L^{p+1} \circ L^{-(p+1)}.
	\eqnnd

\item[(7)] $\pips$: the $\rstar$ conjugate of $\pip$ defined by
	\eqnn
		\pips = \rstar \pip \rstar= 1 - L^{-(p+1)} \circ L^{p+1}.
	\eqnnd

\end{enumerate}

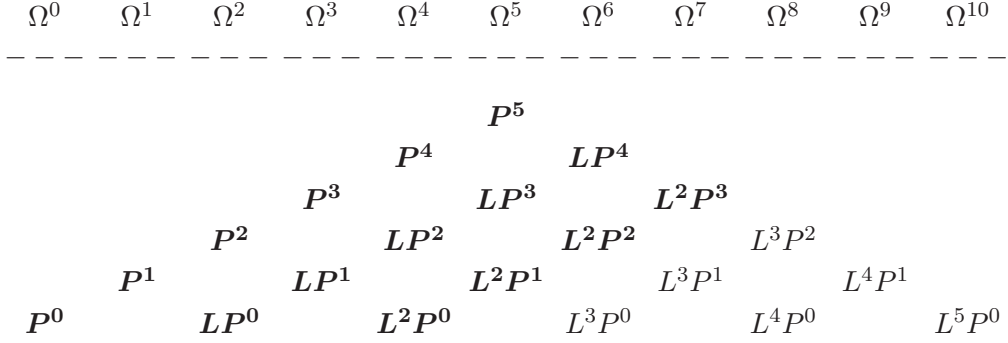
\begin{figure}[t]
$$\xymatrix
@R=0pt
@C=-2pt
@H=2pt
@!C
{
\Omega^0 & \Omega^1 & \Omega^2 & \Omega^3 & \Omega^4 & \Omega^5 & \Omega^6 & \Omega^7 & \Omega^8 & \Omega^9 & \Omega^{10}
\\
---&---&--- &---&---&---&---&---&---&---&---\\
\\
 & & & & & \bm{P^5} & & & & &\\
 & & & & \bm{P^4}& &\bm{L P^4} & & & &\\
 & & &\bm{P^3}& &\bm{L P^3} & & \bm{L^2 P^3} &&&  \\
 & &\bm{P^2}& & \bm{L P^2}& &\bm{L^2 P^2} & &L^3 P^{2} & &\\
 & \bm{P^1} & & \bm{L P^1} & &\bm{L^2 P^1}& &L^3P^{1} & &L^4 P^{1} &\\
\bm{P^0}& &\bm{L P^0}& &\bm{L^2 P^0}& &L^3P^{0}& &L^4P^{0} & & L^5 P^{0} \\
}$$
\caption{The decomposition of  forms into a ``pyramid" diagram in dimension $2n=10$.   Note that there is a natural reflection symmetry action about the $\Omega^{n=5}$ column. This is the $\rstar$ action defined in (\ref{eqn.star}). To illustrate filtered forms, we have {\bf bold-faced} those elements  in $F^p\Omega$ for $p=2$ here.}
\label{figure.triangle}
\end{figure}

The decomposition by powers of $\omega$ allows us to group together forms that contain terms only up to certain powers of $\omega$.

\begin{defn}\label{defn.fforms}
Define the space of {\em $p$-filtered $k$-forms} to be
	\eqnn
	\FO{k} := \pip \Omega^{k}
	\eqnnd
for $p\in \{0, 1, \ldots, n\}$.  
\end{defn}

The filtered $F^p\Omega^k$ spaces give us a natural filtration of $\Omega^k$:
\begin{align*}
P^k := F^{0}\Omega^{k} \subset F^{1}\Omega^{k} \subset F^{2}\Omega^{k} \subset \ldots \subset F^{n}\Omega^{k} = \Omega^k.
\end{align*}
Note that the 0-filtered forms are precisely primitive forms, and that $F^n\Omega^k = \Omega^k$.  
\begin{lemma}\label{lemma.filtered}
The following are equivalent conditions for $p$-filtered forms:
\begin{enumerate}
\item[(i)] $\alpha_k \in \FO{k}\,$;
\item[(ii)] $\Lambda^{p+1} \alpha_k = 0$  ({or equivalently}, $L^{-(p+1)} \alpha_k =0)\,$;
\item[(iii)] $L^{n+p+1-k} \alpha_k = 0\,$;
\item[(iv)] $  L^{p+1} \rstar \alpha_k = 0\,$.
\end{enumerate}
\end{lemma}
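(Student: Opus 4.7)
The plan is to verify each condition on the Lefschetz (primitive) decomposition
\[
\alpha_k \;=\; \sum_{j} L^j \beta_{k-2j}, \qquad \beta_{k-2j} \in P^{k-2j},
\]
using the representation-theoretic fact recalled in the previous subsection: for a primitive $s$-form, $L^m \beta_s = 0$ iff $m > n-s$. Throughout, the summands $L^j \beta_{k-2j}$ sit in distinct components $L^j P^{k-2j}$ of the direct-sum Lefschetz decomposition, and the operators $L$, $\Lambda$, and $\rstar$ act within the $sl_2$-submodule generated by each $\beta_{k-2j}$; hence each of the four conditions can be tested summand by summand.

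First I would settle (i) $\Leftrightarrow$ (ii). By Definition~\ref{defn.fforms} and the formula \eqref{projp}, condition (i) is the statement that only indices $j \leq p$ appear in the expansion of $\alpha_k$. On the other hand, by definition of $L^{-(p+1)}$ one has $L^{-(p+1)}\alpha_k = \sum_{j \geq p+1} L^{j-p-1}\beta_{k-2j}$, and the surviving terms lie in distinct primitive strata $L^{j-p-1}P^{k-2j}$, hence are linearly independent; so $L^{-(p+1)}\alpha_k = 0$ is equivalent to $\beta_{k-2j} = 0$ for all $j \geq p+1$, which is (i). The equivalence $L^{-(p+1)}\alpha_k = 0 \Leftrightarrow \Lambda^{p+1}\alpha_k = 0$ then follows from the $sl_2$-commutation $[L,\Lambda] = H$: iterating yields $\Lambda^{p+1}(L^j \beta_s) = c(j,s,p,n)\, L^{j-p-1}\beta_s$ with a scalar $c$ that is nonzero precisely when $p+1 \leq j \leq n-s$, so $\Lambda^{p+1}$ annihilates the same summands that $L^{-(p+1)}$ does.

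Next I would treat (i) $\Leftrightarrow$ (iii) and (i) $\Leftrightarrow$ (iv) by a single computation on each summand. Applying $L^{n+p+1-k}$ gives
\[
L^{n+p+1-k}\bigl(L^j \beta_{k-2j}\bigr) \;=\; L^{n+p+1-k+j}\,\beta_{k-2j},
\]
which by the vanishing rule is zero iff $n+p+1-k+j > n-(k-2j)$, i.e.\ iff $j \leq p$. The surviving $(j \geq p+1)$ summands live in distinct primitive strata and remain linearly independent; forcing the total to vanish therefore forces each primitive $\beta_{k-2j}$ with $j \geq p+1$ to vanish, giving (i) $\Leftrightarrow$ (iii). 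For (iv), the definition \eqref{eqn.star} gives $\rstar(L^j \beta_{k-2j}) = L^{n-k+j}\beta_{k-2j}$, so $L^{p+1}\rstar(L^j\beta_{k-2j}) = L^{n-k+j+p+1}\beta_{k-2j}$, which again vanishes under the identical inequality $j \leq p$.

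The whole argument is a direct calculation on Lefschetz summands, so there is no single hard step. The only point requiring care is the independence claim invoked repeatedly: the shifted primitives appearing after applying $L^{-(p+1)}$, $\Lambda^{p+1}$, $L^{n+p+1-k}$, or $L^{p+1}\rstar$ may share the same total form-degree, but they belong to distinct pieces of the Lefschetz decomposition distinguished by the primitive-degree index $s = k-2j$, which ensures they are linearly independent. Once this is granted, all four conditions collapse to the single statement that the Lefschetz expansion of $\alpha_k$ only involves indices $j \leq p$.
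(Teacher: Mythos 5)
Your proof is correct: the paper itself states Lemma~\ref{lemma.filtered} without proof (it is recalled from \cite{tsai-tseng-yau} and \cite{tseng-yau}), and your summand-by-summand verification on the Lefschetz decomposition, using $L^m\beta_s=0$ iff $m>n-s$ together with the directness of the decomposition $\Omega^\bullet=\oplus_j L^jP^{k-2j}$, is exactly the standard argument those references use. The one point worth making explicit is that the exponents appearing after applying $L^{n+p+1-k}$ or $L^{p+1}\rstar$ are nonnegative because $\FO{k}$ is only nonzero for $k\leq n+p$, but this is a triviality.
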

\begin{remark}
The $\FO{k}$ space is only non-zero for $k=0, 1, \ldots, (n+p)\,.$  See  Figure~\ref{figure.triangle}.
 \end{remark}

\subsection{The cochain complex \texorpdfstring{$\cF^\bullet_p$}{Fp}}\label{section.cA}
There are natural differential operators $(\ddp, \ddm)$ that maps between $p$-filtered forms.  We recall the key ingredients here:
	\enum
				\item 
	On symplectic manifolds, the exterior differential has a natural decomposition
	\eqnn
	d = \dpp + \om \w \dpm\,.
	\eqnnd
	where $\dpp: L^j P^s \to L^j P^{s+1}$ and $\dpm: L^j P^s \to L^j P^{s-1}$.  Note in particular when $j=0$, $\dpp$ and $\dpm$ map primitive forms into primitive forms.  The pair of operators $(\dpp, \dpm)$ square to zero ($\dpp^2 = \dpm^2 = 0$), commute with $L$ ($ [L, \dpp] = [L, L\dpm] =0$), and also graded commute after applying $L$: $L \dpp \dpm = - L \dpm \dpp$.  
	\item On $p$-filtered forms, we can define two natural linear operators:
\begin{align} 
\ddp &= \pip\, \circ\, d\,,  \label{ddpdef}\\	
\ddm & = \rstar\, d\; \rstar\,,\label{ddmdef}
\end{align}
such that $\ddp:\FO{k} \to \FO{k+1}$ and $\ddm: \FO{k} \to \FO{k-1}$.  Both operators square to zero ($(\ddp)^2 = (\ddm)^2 =0 $).  In fact, when acting on primitive forms, $d_{\pm} P^k = \partial_{\pm} P^k$.  Furthermore, ($\ddp, \ddm$) is related to $(\dpp, \dpm)$ by
\begin{align*}
\ddm &= \dpm + \dpp L^{-1}\\
(\dpp\dpm) \ddp&= \ddm (\dpp\dpm) = 0
\end{align*}
	\enumd

The differentials $(\ddp, \ddm, \dpp\dpm)$ can be used to write down an elliptic complex for $\FO{\bullet}$:
\begin{theorem}\label{ellipticC}[Section 3 of~\cite{tsai-tseng-yau}]
The differential complex
\eqnn
0\xra{} \FO{0}  \xra{\ddp}  \FO{1} \xra{\ddp} \ldots 
	\xra{\ddp} 	\FO{n+p} \xra{\dpp \dpm}
	\FO{n+p}\xra{\ddm}  \FO{n+p-1}\xra{\ddm} \ldots\xra{\ddm}\FO{0} \xra{} 0
\eqnnd
is elliptic for all $p\in\{0, 1, \ldots, n\}$. 
\end{theorem}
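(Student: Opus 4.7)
The plan is to prove ellipticity by verifying that the associated symbol complex is exact at every nonzero cotangent vector. Fix $x \in M$ and $\xi \in T_x^*M \setminus \{0\}$. The principal symbol of $d$ at $\xi$ is left exterior multiplication $e_\xi \colon \alpha \mapsto \xi \w \alpha$, and the standard de Rham symbol sequence $(\Omega_x^\bullet, e_\xi)$ is the Koszul complex of the nonzero linear form $\xi$, hence exact. Since $\rstar$ and $\pip$ are algebraic (order-zero) operators, the symbols at $\xi$ of the operators appearing in our complex are $\sigma(\ddp)(\xi) = \pip \circ e_\xi$, $\sigma(\ddm)(\xi) = \rstar \circ e_\xi \circ \rstar$, and $\sigma(\dpp\dpm)(\xi) = \sigma(\dpp)(\xi) \circ \sigma(\dpm)(\xi)$, where the last is quadratic in $\xi$ and extracted from the Lefschetz components of $e_\xi$ via $d = \dpp + L\dpm$.

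First I would verify exactness of the ``ascending'' half $\FO{0} \to \FO{1} \to \cdots \to \FO{n+p}$ under $\pip e_\xi$. A symbol cycle is an $\alpha \in \FO{k}$ at $x$ with $\pip(\xi \w \alpha) = 0$, so by Lemma~\ref{lemma.filtered} one has $\xi \w \alpha \in L^{p+1}\Omega_x^{k-1-2p}$. Koszul exactness of $(\Omega_x^\bullet, e_\xi)$ provides some $\beta \in \Omega_x^{k-1}$ with $\alpha = \xi \w \beta$; applying $\pip = 1 - L^{p+1}\circ L^{-(p+1)}$ and using $\xi \w \xi = 0$ to absorb the resulting correction into $\ker e_\xi$, one can arrange $\pip \beta = \beta$, producing a preimage in $\FO{k-1}$ at $x$. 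The ``descending'' half under $\ddm$ follows by conjugating everything by $\rstar$ and running the identical argument for $\pips = \rstar \pip \rstar$.

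The main obstacle is exactness at the turning point, namely at the two copies of $\FO{n+p}$ at $x$ bridged by the second-order symbol $\sigma(\dpp\dpm)(\xi)$. My plan is to decompose $\FO{n+p}$ at $x$ into its Lefschetz summands $L^j P_x^{n+p-2j}$, compute $\sigma(\dpp)(\xi)$ and $\sigma(\dpm)(\xi)$ on each summand using the decomposition of $e_\xi$ together with $[L,\dpp]=0$ and $L\,\dpp\dpm = -L\,\dpm\dpp$, and then verify directly that
\[
\im \sigma(\ddp)(\xi) = \ker \sigma(\dpp\dpm)(\xi), \qquad \im \sigma(\dpp\dpm)(\xi) = \ker \sigma(\ddm)(\xi)
\]
as a finite-dimensional statement about $sl_2$-modules. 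The delicate point is that $n+p$ is the top degree at which $\FO{\bullet}$ is nonzero, so $\pip$ and $\pips$ act as the identity on complementary Lefschetz pieces; the second-order symbol measures precisely the discrepancy between these two projections, and checking that it has the prescribed kernel and image is the combinatorial heart of the argument. Once pointwise symbol-exactness holds for every $\xi \neq 0$, ellipticity follows by definition.
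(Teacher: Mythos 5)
First, a remark on the comparison itself: the paper does not actually prove this theorem --- it is imported verbatim from Section~3 of Tsai--Tseng--Yau as part of the ``Recollections'' section, so there is no in-paper proof to measure your attempt against. Your overall strategy (reduce ellipticity to pointwise exactness of the symbol sequence for every $\xi \neq 0$, using that $\pip$ and $\rstar$ are order-zero so the symbols are $\pip\circ e_\xi$, $\rstar e_\xi \rstar$, and the quadratic symbol of $\dpp\dpm$) is the standard and correct route, and it is the one taken in the cited source. But as written the proposal has two genuine gaps.

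The first is in the ascending half. From $\pip(\xi\w\alpha)=0$ you only get $\xi\w\alpha \in L^{p+1}\Omega^{k-2p-1}_x$, yet you immediately invoke Koszul exactness to produce $\beta$ with $\alpha = \xi\w\beta$. Koszul exactness applies only when $\xi\w\alpha=0$; here $\xi\w\alpha$ need not vanish, only its $\pip$-projection does, and the subsequent ``absorb the correction into $\ker e_\xi$'' step does not repair this. What you actually need is exactness of the quotient complex $\bigl(\Omega^\bullet_x/L^{p+1}\Omega^{\bullet-2p-2}_x,\ e_\xi\bigr)$ in degrees $k<n+p$. That statement is true, but it requires an additional argument --- for instance, the short exact sequence of complexes $0\to L^{p+1}\Omega^{\bullet-2p-2}\to\Omega^\bullet\to\Omega^\bullet/L^{p+1}\Omega^{\bullet-2p-2}\to 0$ together with the Lefschetz fact that $L^{p+1}\colon\Omega^j\to\Omega^{j+2p+2}$ is injective for $j\le n-p-1$, so that the subcomplex is an exact Koszul complex in the relevant range. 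The failure of that injectivity at $j=n-p$ is exactly what creates nonzero symbol cohomology at $\FO{n+p}$, i.e.\ the reason the second-order operator must appear there; your argument as stated never sees this.

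The second gap is that the exactness at the two middle copies of $\FO{n+p}$, bridged by $\sigma(\dpp\dpm)(\xi)$, is only announced as a plan (``my plan is to decompose \dots and verify directly'') rather than carried out. You correctly flag it as the delicate point, but it is also the only genuinely nontrivial content of the theorem, and no amount of setup substitutes for the $sl_2$/Lefschetz computation showing $\im\,\sigma(\ddp)(\xi)=\ker\,\sigma(\dpp\dpm)(\xi)$ and $\im\,\sigma(\dpp\dpm)(\xi)=\ker\,\sigma(\ddm)(\xi)$. The remaining ingredients --- the duality reduction of the descending half by conjugating with $\rstar$, and the identification of the various symbols --- are fine.
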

As a corollary of the ellipticity of the complex, the associated filtered cohomologies labelled by $$F^pH=\{F^pH_+^0, \ldots, F^pH_+^{n+p}, F^pH_-^{n+p}, \ldots, F^pH_-^0\}$$
are all finite-dimensional. 

The cochain complex $\cF_p$ is defined following the elliptic complex:   
\begin{align}\label{eqn.A}
	\cpF = 
		\left(\FO{0}  \xra{\ddp} \ldots
	\xra{\ddp} 	\FO{n+p} \xra{-\dpp \dpm}
	\BFO{n+p}\xra{-\ddm} \BFO{n+p-1}
	\xra{-\ddm}\ldots\xra{-\ddm}\BFO{0}\right). 
\end{align}
As vector spaces $\BFO{k} = \FO{k}$.  The extra bar is only inserted to distinguish the second set of $\FO{\bullet}$ in the complex from the first.  Also, the additional minus signs in some of the differentials are needed to satisfy the $A_\infty$ relations described below.  At times, we will just denote the above cochain complex as $\cF^\bullet$, with
	\eqnn
	\cF^j =
	\begin{cases}
	\FO{j} & j \leq n \\
	\BFO{2n+2p+1-j} &  j \geq n+p+1 \\
	0 & j < 0, ~j>2n+2p+ 1.
	\end{cases}
	\eqnnd
The notation here is that $\BFO{k}$ has degree $2n+2p+1-k$ in this complex, while $\FO{k}$ has degree $k$.  If ambiguity may arise whether a $p$-filtered $k$-form is an element of $\cF^k$ or $\cF^{2n+2p+1-k}$, we will add a bar and write $\ov \alpha_k \in \BFO{k}$ to denote an element in $\cF^{2n+2p+1-k}$.

\subsection{The \texorpdfstring{$A_\infty$}{A-infinity}-algebra structure}\label{section.AooF}
We use the sign convention that a series of operations
	\eqnn
	m^l : \cF^{\,\tensor\, l} \to \cF[2-l]
	\eqnnd
satisfies the $A_\infty$ relations if and only if
	\eqnn
	\sum (-1)^{r + st} m^{r + t + 1} (\id^{\,\tensor\, r} \tensor\;  m^s \tensor \id^{\,\tensor\, t}) = 0\,.
	\eqnnd
As an example, if $m^l=0$ for $l \geq 3$, then the data $(\cF, (m^l))$ define an ordinary differential graded algebra (dga). 

{\bf The $m^2$ operation.}
\enum
	\item
	 	For $a_1 \in \FO{k_1}=\cF^{k_1}\,$, $a_2 \in \FO{k_2}=\cF^{k_2}\,$, and  $k_1+k_2 \leq n+p\,$, we set
			\eqnn
			m^2(a_1,a_2) = \Pi^p(a_1 \wedge a_2)
			\eqnnd
		where $\Pi^p$ is the projection to the $p$-filtered components defined above. 
	\item For $a_1 \in \FO{k_1}=\cF^{k_1}\,$, $a_2 \in \FO{k_2}=\cF^{k_2}\,$, and $k_1+k_2 > n+p\,$, define
			\eqnn
			D_L(a_1,a_2) = -d\,L^{-(p+1)}\!(a_1 \wedge a_2) + (L^{-(p+1)}da_1) \wedge a_2 + (-1)^j a_1 \wedge (L^{-(p+1)}da_2).
			\eqnnd
			Then, 
			\eqnn
			m^2(a_1,a_2)
			=
			\Pi^p \ast_r ( D_L (a_1,a_2) ).
			\eqnnd
		In other words, the $m^2$ operation measures the difference between the action of $d\,L^{-(p+1)}$ and the derivation of $L^{-(p+1)} d$ on the wedge product of filtered forms. (It then picks out the $p$-filtered part of the $\ast_r$ reflection.)
	\item
		For  $a_1 \in \FO{k_1}=\cF^{k_1}$ and $a_2\in \BFO{k_2} = \cF^{2n+2p+1-k_2}$, we set
			\eqnn
			m^2(a_1, a_2)
			=
			(-1)^{k_1} \ast_r (a_1 \wedge (\ast_r  a_2) ) .
			\eqnnd
	\item 
		For $a_1\in \BFO{k_1}= \cF^{2n+2p+1-k_1}$ and $a_2 \in \FO{k_2}=\cF^{k_2}\,$, we set
			\eqnn
			m^2(a_1, a_2) = \ast_r \left( (\ast_r  a_1) \wedge a_2 \right).
			\eqnnd
		This implies that $m^2$ is graded commutative:
	\eqnn
	m^2(a_1, a_2) = (-1)^{a_1a_2}m^2(a_2,a_1).
	\eqnnd

	\item
		And for degree reasons, if $a_1\in \BFO{k_1}$ and $a_2\in \BFO{k_2}$, then 
			\eqnn
			m^2( a_1, a_2) = 0\,.
			\eqnnd
\enumd

{\bf The $m^3$ operation.} Let $a_i \in \cF^{k_i}$.  We set
	\eqnn
	m^3(a_1,a_2,a_3)
	=
		\begin{cases}
			\Pi^p \ast_r \big[a_1 \wedge L^{-(p+1)} (a_2 \wedge a_3)  & k_1+k_2+k_3 \geq n+p+2\,,  \\  
\qquad\qquad\qquad  - L^{-(p+1)} (a_1 \wedge a_2) \wedge a_3\big]	& \text{~~~with~} k_1, k_2, k_3 \leq n+p \\

			0 & \text{otherwise}
		\end{cases}
	\eqnnd

{\bf The higher $m^l$ operations.} We set $m^l = 0$ for all $l \geq 4$.

From \cite{tsai-tseng-yau}, we have the following:

\begin{theorem}[Section 5 of~\cite{tsai-tseng-yau}]
For any symplectic manifold $M$, the operations $m^l$ endow $\cF(M)$ with the structure of an $A_\infty$-algebra.
\end{theorem}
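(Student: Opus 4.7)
The plan is to verify the $A_\infty$-relations directly, organizing the bookkeeping by the two-sided structure of the complex~\eqref{eqn.A}. First I would check that $m^1$ squares to zero; this is part of Theorem~\ref{ellipticC}, but more elementarily it amounts to the identities $(\ddp)^2 = (\ddm)^2 = 0$ together with the compatibility $(\dpp\dpm)\ddp = \ddm(\dpp\dpm) = 0$ recalled in Section~\ref{section.cA}. Next I would verify the Leibniz rule for $m^2$, splitting by the location of the inputs. When both inputs lie in $\FO{\bullet}$ with total degree $\leq n+p$, the product is simply $\pip \circ \wedge$, and since ordinary $d$ is a derivation for $\wedge$, the check reduces to controlling the part of $d(a_1 \wedge a_2)$ that $\pip$ discards, i.e.\ its $\omega^{p+1}$-divisible component. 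The remaining cases involving $\rstar$ and the barred copies $\BFO{\bullet}$ follow by $\rstar$-conjugation, using $L^{-p} = \rstar L^p \rstar$ and the fact that on primitive components $\rstar d \rstar$ recovers $\ddm$.

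The main computational step is the associativity defect
\[
m^2(m^2(a_1,a_2),a_3) - (-1)^{|a_1|}\, m^2(a_1, m^2(a_2,a_3)) = \pm\, m^1 m^3(a_1,a_2,a_3) \pm \sum_i m^3(a_1,\ldots, m^1 a_i, \ldots).
\]
When all three inputs lie in $\FO{\bullet}$ the left-hand side measures the failure of $\pip \circ \wedge$ to be associative, and a direct computation shows that this failure equals $\pip \rstar$ applied to a telescoping expression built from the operator $D_L$ appearing in the definition of $m^3$. The key identity is that $L^{-(p+1)}$ provides a canonical primitive for the $\omega^{p+1}$-multiple components that $\pip$ kills, and these primitives combine on triples to give exactly $m^3$. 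I would organize the case analysis by which of the threshold inequalities $k_i+k_{i+1} > n+p$ or $k_1+k_2+k_3 > n+p$ hold, together with the mixed $\FO{\bullet} / \BFO{\bullet}$ placements. The remaining $A_\infty$-relations at arities four and five are strict identities among nested $m^3$ and $m^2$ operations (since $m^{\geq 4} = 0$) and reduce by the same Lefschetz-type manipulations.

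The hard part is the sign and case bookkeeping: the formulas switch between $\FO{\bullet}$ and $\BFO{\bullet}$ via $\rstar$, and the operator $L^{-(p+1)}$ is only defined after a Lefschetz decomposition, so every identity must eventually be reduced to a statement about primitive components weighted by powers of $\omega$. A conceptually cleaner route that I would pursue in parallel---foreshadowing Theorem~\ref{thm.cone}---is to recognize $\cF_p$ as a chain-level model for the mapping cone of the $\Omega(M)$-module map $\wedge\omega^{p+1}: \Omega(M)[-2p-2] \to \Omega(M)$. This cone carries a natural $A_\infty$-algebra structure inherited from the derived quotient $\Omega(M) \otimes^{L}_{\RR[\omega^{p+1}]} \RR$, and homotopy transfer along an explicit contraction of $\Omega(M)$ onto $\FO{\bullet} \oplus \BFO{\bullet}$ would automatically produce operations satisfying all $A_\infty$-relations. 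The residual task of matching the transferred operations with the explicit formulas above still requires a direct calculation, but with signs flowing canonically from the cone construction it becomes a single verification at each arity.
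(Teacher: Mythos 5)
A point of orientation first: this paper does not prove the statement itself---it is imported verbatim from Section~5 of the cited Tsai--Tseng--Yau paper---so there is no in-paper proof to compare against. Your primary plan (direct verification of the relations at arities $1$ through $5$, organized by the position of the inputs relative to the middle of the complex and by the thresholds $k_i+k_{i+1}\lessgtr n+p$) is the shape of the argument in the cited source, and the skeleton is right: $(m^1)^2=0$ from $(\ddp)^2=(\ddm)^2=0$ and $(\dpp\dpm)\ddp=\ddm(\dpp\dpm)=0$; the Leibniz rule controlled by the $\omega^{p+1}$-divisible part of $d(a_1\wedge a_2)$ that $\pip$ discards; the associativity defect of $\pip\circ\wedge$ absorbed by $m^3$ via $L^{-(p+1)}$; arities $4$ and $5$ as identities among nested $m^2,m^3$. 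But essentially all of the content lives in the deferred case analysis and signs, which you have not carried out. One concrete warning: under the paper's convention the arity-$3$ relation reads $m^2(m^2\tensor 1)-m^2(1\tensor m^2)+m^1m^3+\sum m^3(\cdots m^1\cdots)=0$, with no Koszul sign on the $m^2(1\tensor m^2)$ term since $|m^2|=0$; the $(-1)^{|a_1|}$ in your displayed defect is not consistent with this.

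Your second route is genuinely different and is, in effect, Section~3 of this paper run in reverse: the paper constructs exactly the strong deformation retract you would need (Lemma~\ref{lemma.retractp}, the data $(f,g,G)$ contracting $\cC=\cone(\omega^{p+1})$ onto $\cF$) and then verifies by hand that the given $m^\bullet_\cF$ are intertwined with $m^\bullet_\cC$ by an $A_\infty$ morphism (Theorem~\ref{thm.Aoo-map}). Two caveats if you pursue transfer. First, the contraction is of $\cC$ onto $\cF$, not of $\Omega(M)$ onto $\FO{\bullet}\oplus\BFO{\bullet}$ as you wrote; $\Omega(M)$ has the wrong cohomology, so the derived-quotient framing must be set up on the cone, as in Definition~\ref{defn.C}. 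Second, the clean tree-formula version of homotopy transfer requires the side conditions $fG=0$, $Gg=0$, $G^2=0$, and the paper's $G$ does not satisfy $G^2=0$ in general (e.g.\ $G^2(\eta\oplus 0)=L^pL^{-(p+1)}\eta\oplus\theta\,0$ need not vanish), so one must first massage the homotopy by the standard replacements before transferring. After that you still owe the identification of the transferred $m^2,m^3$ with the explicit formulas and the vanishing of the transferred $m^{\geq 4}$ (which should follow from $\theta\wedge\theta=0$ killing most trees), so the computational burden is comparable---what the transfer route buys is that the signs and the $A_\infty$ relations themselves come for free.
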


\section{Equivalence of algebras}

\subsection{Definition of \texorpdfstring{$\cC$}{C}}
Let $f: X \to Y$ be a map of cochain complexes. In homological algebra, the natural notion of quotient is the {\em cone}, or {\em mapping cone} of $f$. It is defined to be the cochain complex
	\eqnn
		Y \oplus X[1],
		\qquad
		d(y\oplus x) = dy + f(x) \oplus -dx.
	\eqnnd
Finally, a {\em commutative differential graded algebra}, or {\em cdga}, is a commutative algebra in the category of cochain complexes. Explicitly, a cdga is a cochain complex $A = (A,d)$ together with a map of cochain complexes
	\eqnn
		m^2: A \tensor A \to A
	\eqnnd
which is associative, and which satisfies graded commutativity:
	\eqnn
			m^2(a,b) = (-1)^{|a| \, |b|} m^2(b,a).
	\eqnnd
Note that $m^2$ is a map of cochain complexes if and only if it satisfies the Leibniz rule:
	\eqnn
		dm^2(a,b) = m^2(da,b) + (-1)^{|a|} m^2(a,db).
	\eqnnd
A map of cdgas is a map of cochain complexes $f: A \to B$ such that $f \circ m^2 = m^2 \circ f \tensor f$. 

\begin{remark}
Given any commutative ring $R$, and an element $x$, one has the map of $R$-modules 
	\eqnn
	R \xra{x} R
	\eqnnd
and one can define the quotient $R$-module $R/xR$; this also has a commutative ring structure. In what follows, we consider the case of $R = \Omega^\bullet(M)$, the de Rham complex of differential forms on a symplectic manifold $M$. Our element $x$ is some power of $\omega$.\footnote{In fact, the discussion holds for any even-degree, closed differential form on $M$.} So, heuristically, $\cC$ should be thought of as ``$R/xR$.''
\end{remark}

Consider the degree 0 map
	\eqnn
		\omega^{p+1}: \Omega(M)[-2p-2] \xra{\w\, \omega^{p+1}} \Omega(M).
	\eqnnd
Then $\cone(\omega^p)$ has in degree $k$ the vector space  
	\eqnn
	\Omega^k(M) \oplus \theta\,\Omega^{k-2p-1}(M)
	\eqnnd
where $\theta$ is a formal element of degree $2p+1$. This has differential
	\eqnn
	d_{\cC}(\eta \oplus \theta \xi)
	=
	(d\eta + \omega^{p+1} \xi )\oplus
	-\,\theta\, d\xi.
	\eqnnd
This cochain complex has another description endowing it with an obvious cdga structure:

\begin{defn}\label{defn.C}
We define $\cC$ to be the cdga obtained from $\Omega(M)$ by freely attaching a degree $2p+1$ variable $\theta$ satisfying $d\theta - \omega^{p+1} = 0$. Note that $\omega$ and $p$ are implicit in writing $\cC$.
\end{defn}

Thinking of the cdga  $\cC$ as an $A_\infty$-algebra, we will let $m^1_\cC$ and $m^2_\cC$ denote its differential and its multiplication, respectively, while $m^k_\cC = 0$ for all $k \geq 3$. 
Explicitly, we have
	\eqnn
	m^2_\cC
	\left((\alpha + \theta \alpha') \tensor (\beta + \theta \beta')\right) 
	=
	(\alpha + \theta \alpha') \wedge (\beta + \theta \beta')
	=
	\alpha\wedge \beta \oplus \theta(\alpha' \wedge \beta + (-1)^{\alpha}\alpha \wedge \beta').
	\eqnnd

\subsection{\texorpdfstring{$\cC$}{C} is equivalent to \texorpdfstring{$\cF$}{F}}
Throughout this section we fix an integer $0 \leq p \leq n$. The dependence of $\cC$ and $\cF$ on this choice is implicit.   Furthermore, the indices $j$ and $k$ take integral values $j\in\{0,1,\ldots, 2n+2p+1\}$ and  $k\in\{0, 1, \ldots, n+p\}$.   

We shall write any $p$-filtered form $\alpha \in \FO{k}$ as
\begin{align*}
\alpha_k = \beta_k + \om   \beta_{k-2} + \ldots + \om^p   \beta_{k-2p} 
\end{align*}
where each $\beta_i\in P^i$ is a primitive form of degree $i$.  

Let us first make the observation that
\begin{align*}
	\cC^k &= \Omega^k \oplus \,\theta\,\Omega^{k-2p-1}\\
	\cC^{2n+2p+1-k} &= \Omega^{2n+2p+1-k} \oplus\, \theta\,\Omega^{2n-k} = \left(\rstar\, \Omega^{k-2p-1}\right) \oplus \, \theta \, \left(\rstar \Omega^k\right)  
\end{align*}
making use of the relation $\rstar \Om^k(M) = \Om^{2n-k}(M)\,$ for any integer $k$.
The above implies $\cC^k \cong \cC^{2n+2p+1-k}$ as expected.  Following  this, we can write for any $C_k \in \cC^k$
\begin{align}\label{Ckexp} 
C_k = (\alpha_k + \om^{p+1}   \beta_{k-2p-2} + \ldots ) + \thp   ( \alpha_{k-2p-1} + \om^{p+1}   \beta_{k-4p-3} + \ldots ) \,,
\end{align}
and for any $C_{2n+2p+1-k}\in \cC^{2n+2p+1-k}$
\begin{align}\label{Cnkexp}
C_{2n+2p+1-k}  = \rstar ({\alpha}_{k-2p-1} + \om^{p+1}   { \beta}_{k-4p-3} + \ldots ) + \thp  \ast_r ({\alpha}_k + \om^{p+1}   { \beta}_{k-2p-2} + \ldots)\,.
\end{align} 
If we further replace the $\alpha_{k-2p-1}$ terms in \eqref{Ckexp}-\eqref{Cnkexp} above with its Lefschetz decomposed form
$$\alpha_{k-2p-1} = \beta_{k-2p-1} + \om \beta_{k-2p-3} +\ldots + \om^p \beta_{k-2p}\,,$$ 
then we find the relation
\begin{align*}
\cC^k \cong \cC^{2n+2p+1-k} \cong \left(\bigoplus_{0 \leq j \leq k-2p-1} P^j \right)\; \oplus\;\;\FO{k} 
\end{align*}
Therefore, the maps 
\begin{align}
	C_k &\mapsto (\beta_0,\ldots,\beta_{k-2p-1}, \alpha_k)\label{cbd1}\\
	\qquad
	C_{2n+2p+1-k}& \mapsto (\beta_0, \ldots, \beta_{k-2p-1}, \alpha_k)\label{cbd2}
\end{align}
are isomorphisms of $\RR$ vector spaces.  This gives another representation of elements in $\cC^j$.

\begin{figure}[t]
\begin{align*}
\xymatrix
{
 \ldots \ar[r]^-d & \cC^k \ar[d]^-{\alpha_k}\ar[r]^-d& \ldots\ar[r]^-d& \cC^{n+p} \ar[d]^-{\alpha_{n+p}} \ar[r]^-d& \cC^{n+p+1}\ar[d]^-{ -({\alpha}_{n+p} +  w^p \dpp{\beta}_{n-p-1})} \ar[r]^-d& \ldots \ar[r]^-{d} & \cC^{2n+2p+1-k}\ar[d]^-{ -({\alpha}_k +  w^p \dpp{\beta}_{k-2p-1})} \ar[r]^-{d} & \ldots \\
 \ldots \ar[r]^-{\ddp} & \FO{k} \ar[r]^-{\ddp}& \ldots \ar[r]^-{\ddp} & \FO{n+p} \ar[r]^-{-\dpp\dpm} & \BFO{n+p} \ar[r]^-{-\ddm} &\ldots  \ar[r]^-{-\ddm} & \BFO{k} \ar[r]^-{-\ddm}& \ldots
}
\end{align*}
\caption{The map $f:\cC \to \cF$.  The maps are expressed in terms of the decomposition in \eqref{Ckexp}-\eqref{Cnkexp}.}
\label{figure.mapf}
\end{figure}
\begin{figure}[t]
\begin{align*}
\xymatrix
{
 \ldots \ar[r]^-d & \cC^k \ar[r]^-d& \ldots\ar[r]^-d& \cC^{n+p}  \ar[r]^-d& \cC^{n+p+1} \ar[r]^-d& \ldots \ar[r]^-{d} & \cC^{2n+2p+1-k} \ar[r]^-{d} & \ldots \\
 \ldots \ar[r]^-{\ddp} & \FO{k} \ar[u]^-{\alpha_k-\theta \dpm\beta_{k-2p}}\ar[r]^-{\ddp}& \ldots \ar[r]^-{\ddp} & \FO{n+p} \ar[u]^-{\om^p\beta_{n-p} - \theta \dpm\beta_{n-p}}\ar[r]^-{-\dpp\dpm} & \BFO{n+p} \ar[u]^-{-\theta \beta_{n-p}}\ar[r]^-{-\ddm} &\ldots  \ar[r]^-{-\ddm} & \BFO{k} \ar[u]^-{-\theta \rstar \alpha_k} \ar[r]^-{-\ddm}& \ldots
}
\end{align*}
\caption{The map $g: \cF \to \cC$.  In the maps above, we use the notation $\beta_{k-2p} = L^{-p}\alpha_k\,$,  and for the two maps in the middle, we have applied the relation $\FO{n+p}=\BFO{n+p} = L^p P^{n-p}$.}
\label{figure.mapg}
\end{figure}

We now define a pair of maps $(f, g)$ that relates $\cC$ and $\cF$. 
\begin{defn}\label{defn.fg}
In terms of \eqref{Ckexp}-\eqref{Cnkexp},  we define  
\begin{align*}
f:~~&~~ \cC^j &&\to \qquad\quad \cF^j\\
&~~ C_j &&\mapsto 
\begin{cases}
\qquad\, \alpha_j  & j \leq n+p \,, \\
-({\alpha}_k +  \pips w^p d{\alpha}_{k-2p-1})  & j\geq n+p+1,\, j=2n+2p+1-k \,,
\end{cases}
\end{align*}
where $\pips w^p d{\alpha}_{k-2p-1} = \om^p\dpp{\beta}_{k-2p-1}\,$, and 
\begin{align*}
g:~~&~~\cF^j &&\to \qquad\qquad \cC^j\\
&~~\alpha_k &&\mapsto 
\begin{cases}\alpha_k - \thp  L^{-(p+1)} d \alpha_k  & ~~ j\leq n+p,~k=j\,,\\
- \,\thp \ast_r {\alpha}_k &~~ j\geq n+p+1,~k=2n+2p+1-j\,,
\end{cases}
\end{align*}
where $L^{-(p+1)}d\alpha_k= \dpm\beta_{k-2p}\,$. 
\end{defn}
A pictorial description of the $f$ and $g$ maps is given in Figure \ref{figure.mapf} and Figure \ref{figure.mapg}, respectively.  Alternatively, we can also express the maps in terms of the decomposition in \eqref{cbd1}-\eqref{cbd2}.  We can write  $f: \cC^j \to \cF^j$ as 
\begin{align}\label{fdecomp}
f:  (\beta_0,\ldots,\beta_{k-2p-1}, \alpha_k)  \mapsto
\begin{cases}  
\alpha_k &    j\leq n+p,~ k=j\\
-({\alpha}_k +  w^p \dpp{\beta}_{k-2p-1} ) & j\geq n+p+1,~ k=2n+2p+1-j
\end{cases}
\end{align}
and $g: \cF^j \to \cC^j$ as
\begin{align}\label{gdecomp}
g&:  \alpha_k  \mapsto 
\begin{cases} 
(0,\ldots, - L^{-(p+1)} d \alpha_k, \alpha_k) & j\leq n+p,~ k=j\\
(0,\ldots, 0, - {\alpha}_k)& j\geq n+p+1,~ k=2n+2p+1-j
\end{cases}
\end{align}

\begin{lemma}\label{lemma.chain}
The maps $f: \cC \to \cF$ and $g: \cF \to \cC$ are chain maps, i.e. 
\begin{align*} d_{\cF}\, f = f\, d_{\cC}\,, \qquad  d_{\cC}\, g = g\, d_{\cF}\,.
\end{align*}
\end{lemma}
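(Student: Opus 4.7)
The plan is to verify both identities $d_\cF f = f d_\cC$ and $d_\cC g = g d_\cF$ by case analysis on the degree $j$, matching the piecewise definitions of $f$, $g$, and of $d_\cF$ (which changes type at the midpoint $j = n+p$). The computational bridge between the cone differential $d_\cC(y + \theta\xi) = dy + \omega^{p+1}\xi - \theta d\xi$ and the $\cF$-differentials is the single Lefschetz identity
\begin{equation*}
d\alpha_j \;=\; \ddp\alpha_j \;+\; \omega^{p+1}\dpm\beta_{j-2p}
\end{equation*}
valid for every $\alpha_j = \beta_j + \omega\beta_{j-2} + \ldots + \omega^p\beta_{j-2p} \in \FO{j}$; equivalently $L^{-(p+1)}d\alpha_j = \dpm\beta_{j-2p}$. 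This follows directly from $d = \dpp + \omega\dpm$ together with $[L,\dpp]=0$, and it lets one rewrite $d_\cC$ degree by degree in the representations \eqref{Ckexp}--\eqref{Cnkexp}, after which the chain-map condition reduces to matching of Lefschetz components.

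For $f$: in the lower half ($j+1 \leq n+p$) both sides equal $\ddp\alpha_j$ because $\Pi^p$ annihilates both the $\omega^{p+1}$-tail of the $\Omega$-piece of $C_j$ and the $\omega^{p+1}\alpha_{j-2p-1}$ term produced by $d_\cC$. At the transition $j = n+p$, one reads off from \eqref{Cnkexp} the leading Lefschetz pieces $\alpha'_{n+p}$ and $\beta'_{n-p-1}$ of $d_\cC C_{n+p} \in \cC^{n+p+1}$; applying the upper-half formula $f = -(\alpha'_{n+p} + \omega^p\dpp\beta'_{n-p-1})$ and using $L\dpp\dpm = -L\dpm\dpp$ collapses the result to exactly $-\dpp\dpm\alpha_{n+p}$. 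In the upper half, applying $-\ddm = -(\dpm + \dpp L^{-1})$ to $-(\alpha_k + \omega^p\dpp\beta_{k-2p-1})$ and invoking $\dpp^2 = 0$ and $\dpp\dpm + \dpm\dpp = 0$ reproduces $f(d_\cC C_j)$; the correction $\omega^p\dpp\beta_{k-2p-1}$ in the definition of $f$ is precisely what absorbs the $-\theta d(\ldots)$ contribution in $d_\cC$.

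For $g$ the argument is parallel. In the lower half,
\begin{equation*}
d_\cC\, g(\alpha_j) \;=\; d_\cC\bigl(\alpha_j - \theta\dpm\beta_{j-2p}\bigr) \;=\; \ddp\alpha_j + \theta\,\dpp\dpm\beta_{j-2p},
\end{equation*}
using the Lefschetz identity above together with $d\dpm\beta_{j-2p} = \dpp\dpm\beta_{j-2p}$. On the other hand $g(\ddp\alpha_j) = \ddp\alpha_j - \theta L^{-(p+1)}d\ddp\alpha_j$, and a short computation shows $L^{-(p+1)}d\ddp\alpha_j = \dpm\dpp\beta_{j-2p} = -\dpp\dpm\beta_{j-2p}$, so the two sides match. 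The transition case and upper half are handled identically, after using $\ddm = \ast_r d \ast_r$ to convert everything back to the Lefschetz identity above through $\ast_r$-conjugation.

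The main obstacle is the bookkeeping at the midpoint transition $j = n+p \to n+p+1$, where $d_\cF$ jumps from $\ddp$ to the second-order operator $-\dpp\dpm$ and then to $-\ddm$, and where both $f$ and $g$ simultaneously switch defining formulas. The correction terms $\omega^p\dpp\beta_{k-2p-1}$ in $f$'s upper-half formula and $-\theta L^{-(p+1)}d\alpha_k$ in $g$'s lower-half formula are engineered precisely to absorb the $\omega^{p+1}\xi$-contribution in $d_\cC$ that lies outside $F^p\Omega$. Once this matching is verified at the transition, every other case reduces to a routine manipulation using $\dpp^2 = \dpm^2 = 0$, $\dpp\dpm + \dpm\dpp = 0$ on primitives, and the definitions of $\ddp$, $\ddm$, $\Pi^p$, $\ast_r$ from Section~\ref{section.cA}.
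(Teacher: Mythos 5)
Your proposal is correct and follows essentially the same route as the paper: a direct case-by-case verification over the degree ranges $j<n+p$, $j=n+p$, and $j>n+p$, organized around the identity $d\alpha_j = \ddp\alpha_j + \omega^{p+1}\dpm\beta_{j-2p}$ (equivalently $L^{-(p+1)}d\alpha_j=\dpm\beta_{j-2p}$), which is exactly the computation the paper carries out. The only cosmetic caveat is that $\dpp\dpm+\dpm\dpp=0$ is only guaranteed after applying $L$ (it can fail on $P^n$ itself), but in every place you invoke it the primitive form has degree below $n$, so the argument goes through.
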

\begin{proof}
The lemma is proved by direct calculation.  We first compute
\begin{align}\label{chainc1}
		d_{\cF}\, f\, C_j = 
			\begin{cases} 
			 \ddp \alpha_j 
				& j < n+p \\
			- \om^p   \dpp\dpm \beta_{j-2p} 
				& j=n+p \\
			\ddm \alpha_k  - \om^p   \dpp\dpm \beta_{k-2p-1}   
			    & j=2n+2p+1-k > n+p , \quad k \leq n+p
			\end{cases}
\end{align}
To compute $f d_{\cC}$, we consider first the case $j< n+p$.  We have
\begin{align*}
d_{\cC} C_j & = (\ddp \alpha_j  +  \om^{p+1}  (\dpm \beta_{j-2p} + \dpp \beta_{j-2p-2}) + \ldots) + \om^{p+1}   (\alpha_{j-2p-1} + \om^{p+1}   \beta_{j+4p -3} + \ldots) \\ &\qquad\qquad - \thp   d(\alpha_{j-2p-1} + \om^{p+1}   \beta_{j+4p -3} + \ldots) \nonumber
\end{align*}
It then follows that $f\, d_{\cC}\, C_j = \ddp \alpha_j$.  When $j=n+p$, the above equation becomes
\begin{align*} 
d_{\cC} C_{n+p} & = \om^{p+1}   (\dpm \beta_{n-p} + \dpp \beta_{n-p-2} + \beta_{n-p-1}) + \cO(\om) \ldots) \\
& \qquad  -  \thp   d(\alpha_{n-p-1} + \om^{p+1}   \beta_{n+3p -3} + \ldots) 
\end{align*}
Notice that $\thp   d \alpha_{n-p-1} = \thp   (\dpp \beta_{n-p-1} + \cO(\omega) ) = \thp \ast_r (\om^p   \dpp \beta_{n-p-1} + \cO(\om^{p+1})).$  Hence, we obtain
\begin{align*}
f\, d_{\cC}\, C_{n+p} = - [-\om^p   \dpp \beta_{n-p-1} + \om^p   \dpp(\dpm \beta_{n-p} + \beta_{n-p-1})] = - \om^p   \dpp\dpm \beta_{n-p}~.
\end{align*}
Now for $j>n+p$, let $j=2n+2p+1-k$ where $0\leq k\leq n+p$.  Then 
\begin{align*}
d_{\cC} C_{j} &= \om^{n+2p+2-k} ( \dpm \beta_{k-2p-1}+\dpp \beta_{k-2p-3} + \beta_{k-2p-2}  + \cO(\om) ) \\ & \quad - \thp L^{n-k+1} \left[(\dpm \beta_k + \dpp \beta_{k-2}) + \ldots- \om^p (\dpm \beta_{k-2p} + \dpp \beta_{k-2p-2}) + \dots\right] 
\end{align*}
This gives the desired result 
\begin{align*}
f \, d_{\cC}\, C_{2n+2p+1-k} & = - (- \ddm \alpha_k - \om^p   \dpp \beta_{k-2p-2} + \om^p   \dpp ( \dpm \beta_{k-2p-1} +  \beta_{k-2p-2}) \\ 
& = \ddm \alpha_k - \om^p   \dpp\dpm \beta_{k-2p-1}
\end{align*}
The calculation for the $g$ map is straightforward. It can be easily checked that for $\alpha_k\in \cF^k$, 
\begin{align*}
d_{\cC}\, g (\alpha_k)  = \ddp \alpha_k +  \thp   \dpp\dpm \beta_{k-2p} = g \,d_{\cF} (\alpha_k) \,,
\end{align*}
and for $\alpha_k \in \cF^{2n+2p+1-k}$,
\begin{align*}
d_{\cC}\, g (\alpha_k)& = \thp \rstar \, \ddm \bar{\alpha}_k = g\, d_{\cF} (\bar{\alpha}_k) 
\end{align*}
\end{proof}

It is straightforward to check that  $f \,g = \id_{\cF}$.   Hence, we may wonder whether $g \, f$ admits a chain homotopy to the identity on $\cC$.

\begin{defn}
We define $G: \cC^j \to \cC^{j-1}$ as follows. Fix $\eta \in \Omega^j$ and $\xi \in \Omega^{j-2p-1}$. 
\begin{align*}
G (\eta + \theta \wedge \xi) := L^p\xi + \theta  L^{-(p+1)} \eta.
\end{align*}
\end{defn} 
In terms of the decomposition \eqref{cbd1}-\eqref{cbd2}, $G: \cC^j \to \cC^{j-1}$ acts by sending the element $(\beta_0,\ldots, \beta_{k-2p-1}, \alpha_k)$ to 
\begin{align*}
\begin{cases} 	
(\beta_0,\ldots,\beta_{k-2p-2}, \om^p \beta_{k-2p-1}) 
& j\leq n+p,~ k=j\\
(\beta_0,\ldots, \beta_{k-2p-1}, \alpha_{k})
	& j=n+p+1,~ k=n+p \\
(\beta_0,\ldots,\beta_{k-2p-1}, L^{-p}\alpha_k, 0 ) & j > n+p+1, ~k = 2n+2p+1-j \end{cases} 
\end{align*}
where by convention $\beta_{-1} = 0$.  

Making use of $G$, we have the following result:
\begin{lemma}\label{lemma.retractp}
The inclusion $g: \cF \to \cC$ and the maps $f$ and $G$ exhibit $\cF$ as a strong deformation retract of $\cC$:
\eqnn
	f \,g = \id_\cF,
	\qquad
	\id_{\cC}  - g\, f = d_{\cC} G + G d_\cC.
	\eqnnd
In particular, both $f$ and $g$ are quasi-isomorphisms and $H^*(\cF) \cong H^*(\cC)$.
\end{lemma}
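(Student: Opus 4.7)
The plan is to verify the two identities by direct computation on the components of $\cC$, splitting into cases based on whether the degree $j$ is below, equal to, or above $n+p$, and then to invoke standard homological algebra for the concluding quasi-isomorphism statement.

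For $fg = \id_\cF$: if $\alpha_k \in \cF^j$ with $j \leq n+p$, then $g(\alpha_k) = \alpha_k - \theta\, L^{-(p+1)}d\alpha_k$, whose non-$\theta$ component is already $p$-filtered, so $f$ reads off $\alpha_k$. If instead $\alpha_k \in \cF^{2n+2p+1-k}$ with $k \leq n+p$, then $g(\alpha_k) = -\theta \rstar \alpha_k$ has vanishing non-$\theta$ part, so the $\pips \omega^p d(\cdot)$ correction in the formula for $f$ vanishes, and the overall sign in Definition~\ref{defn.fg} recovers $\alpha_k$ from the $-\rstar \alpha_k$ sitting in the $\theta$-slot. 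This part is routine.

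The main work lies in the chain homotopy identity $\id_\cC - gf = d_\cC G + G d_\cC$. I would verify this pointwise on $\cC^j$ in three regimes. For $j < n+p$, write $C_j = \eta + \theta\xi$ with $\eta = \alpha_j + \omega^{p+1}\gamma$ and $\alpha_j \in F^p\Omega^j$; then $gf(C_j) = \alpha_j - \theta L^{-(p+1)}d\alpha_j$, so $(\id - gf)(C_j) = \omega^{p+1}\gamma + \theta(\xi + L^{-(p+1)}d\alpha_j)$. Since $L^{-(p+1)}\alpha_j = 0$ by $p$-filteredness, one computes $G(C_j) = L^p\xi + \theta\gamma$, and a short calculation using $d_\cC(\eta + \theta\xi) = (d\eta + \omega^{p+1}\xi) - \theta d\xi$ together with $L^{-(p+1)}\omega^{p+1} = \id$ shows that $(d_\cC G + G d_\cC)(C_j)$ reproduces exactly $\omega^{p+1}\gamma + \theta(\xi + L^{-(p+1)}d\alpha_j)$, as desired.

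The boundary regimes $j = n+p$ and $j \geq n+p+1$ constitute the main obstacle, since the formulas for $f$, $g$, $G$, and the $\cF$-differential all take different shapes there. For $j = n+p$ one uses $\FO{n+p} = L^p P^{n-p}$ together with the fact that the outgoing differential on $\cF^{n+p}$ is $-\dpp\dpm$; for $j \geq n+p+1$ one expands $C_j$ using \eqref{Cnkexp} and invokes the conjugation relations $L^{-(p+1)} = \rstar L^{p+1}\rstar$ and $\pips = \rstar \Pi^p \rstar$ to mirror the first regime, now accounting for the extra $\omega^p\dpp$-correction appearing in the formula for $f$. Once the chain homotopy is established, the quasi-isomorphism assertion---and hence $H^*(\cF) \cong H^*(\cC)$---follows from the standard fact that any strong deformation retract of cochain complexes induces isomorphisms on cohomology.
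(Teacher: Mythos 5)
Your proposal is correct and follows essentially the same route as the paper: verify $fg=\id_\cF$ directly from the definitions, then check $\id_\cC - gf = d_\cC G + Gd_\cC$ componentwise by splitting on the degree $j$ relative to $n+p$ and expanding elements of $\cC^j$ via the Lefschetz-type decompositions \eqref{Ckexp}--\eqref{Cnkexp}; your worked-out computation in the regime $j\leq n+p$ agrees with the paper's (note that $j=n+p$ needs no separate treatment for this identity, since $f$, $g$, and $G$ use the same formulas there). The only difference is that you leave the regime $j\geq n+p+1$ as a sketch---this is where the paper does the bulk of the computation, using exactly the ingredients you name (the expansion \eqref{Cnkexp}, the relations $L^{-(p+1)}=\rstar L^{p+1}\rstar$ and $L^{p+1}\rstar\alpha_k=0$, and the $\omega^p\dpp$-correction in $f$), so carrying it out would complete the argument with no new ideas required.
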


\begin{proof}[Proof of Lemma~\ref{lemma.retractp}.]
As mentioned, the proof of the first relation $f g = \id_\cF$ follows directly from the definition of the maps.  For the second relation, consider first for $\cC^{j=k}$ with $0\leq k \leq n+p$.  
For the lefthand side, we have
\begin{align*}
(g f) C_k &= \alpha_k - \thp   \dpm \beta_{k-2p} \\
(\id_{\cC}  - g f) C_k &=  (\om^{p+1} \beta_{k-2p-2}+ \ldots ) + \thp   ( \dpm \beta_{k-2p} + \alpha_{k-2p-1} + \om^{p+1}   \beta_{k-4p-3} + \ldots)
\end{align*}
For the righthand side, we have
\begin{align*}
G C_k & = \om^p (\alpha_{k-2p-1} + \om^{p+1} \beta_{k-4p-3} + \ldots) + \thp ( \beta_{k-2p-2} + \om  \beta_{k-2p-4} + \ldots )\\
d_{\cC} G C_k &  = \om^p   d(\alpha_{k-2p-1} + \om^{p+1} \beta_{k-4p-3} + \ldots) +\om^{p+1} (\beta_{k-2p-2} + \om  \beta_{k-2p-4} + \ldots ) \\  & \qquad\qquad - \thp   d(\beta_{k-2p-2}  + \ldots )\\
G d_{\cC} C_k & = - \om^p   d(\alpha_{k-2p-1} + \om^{p+1}   \beta_{k+4p -3} + \ldots)\\ &\quad +\thp   \left[ \dpm \beta_{k-2p} + d(\beta_{k-2p-2} +   \ldots ) + (\alpha_{k-2p-1} + \om^{p+1}   \beta_{k+4p -3} + \ldots)\right]
\end{align*}
which results in
\begin{align*}
(d_{\cC} G + G d_\cC) C_k & = \om^{p+1} (\beta_{k-2p-2} + \om  \beta_{k-2p-4} + \ldots ) \\ & \qquad + \thp   (\dpm \beta_{k-2p} + \alpha_{k-2p-1} + \om^{p+1}   \beta_{k+4p -3} + \ldots)
\end{align*}
which matches exactly with the lefthand side.

We now check the case for $\cC^{j=2n+2p+1-k}$ with $0\leq k \leq n+p$.  For the lefthand side, we have
\begin{align*}
g f C_{2n+2p+1-k} & = \thp \ast_r (\alpha_k + \om^p \dpp \beta_{k-2p-1})\\
(\id_{\cC}  - g f) C_{2n+2p+1-k} &= \om^{n+2p+1-k} (\alpha_{k-2p-1} + \om^{p+1}   \beta_{k-4p-3} + \ldots) \\ & \quad + \thp \ast_r (-\om^p \dpp \beta_{k-2p-1} + \om^{p+1}    \beta_{k-2p-2} +  \ldots) 
\end{align*}
For the righthand side, we have
\begin{align*}
GC_{2n+2p+1-k} 
	& = \om^{n-k+p}   (\alpha_k + \om^{p+1}  \beta_{k-2p-2} + \ldots)  \\
	&\quad + \thp   \om^{n-p+k}  (\alpha_{k-2p-1} + \om^{p+1}  \beta_{k-4p-3}+ \ldots)\\
d_{\cC} G\, C_{2n+2p+1-k} 
	& =  \om^{n-k+p}    d(\alpha_k + \om^{p+1}  \beta_{k-2p-2} + \ldots) \\ 
	& \quad + \om^{n-k+2p+1}  (\alpha_{k-2p-1} + \om^{p+1}  \beta_{k-4p-3} + \dots)\\ 
	& \quad 
- \thp   \om^{n+p-k}   d(\alpha_{k-2p-1} + \om^{p+1}  \beta_{k-4p-3} + \ldots)\\
d_{\cC} C_{2n+2p+1-k} 
	& = \om^{n+2p+1-k}   \left[d(\alpha_{k-2p-1} + \om^{p+1}  \beta_{k-4p-3} + \ldots) - \dpp \beta_{k-2p-1}\right]\\  
	& \quad + \om^{n-k+p+1}  (\om^{p+1}   \beta_{k-2p-2} + \ldots) \\
	& \quad - \thp   \om^{n-k}  d(\alpha_k + \om^{p+1}   \beta_{k-2p-2} +\ldots)\\
G d_{\cC} \,C_{2n+2p+1-k} 
	& = - \om^{n-k+p}  d(\alpha_k + \om^{p+1}   \beta_{k-2p-2} +\ldots)+ \thp    \om^{n-k+p+1}(\beta_{k-2p-2} + \ldots) \\  
	&\quad + \thp    \om^{n+p-k}  \left[d(\alpha_{k-2p-1} + \om^{p+1}  \beta_{k-4p-3} + \ldots)- \dpp \beta_{k-2p-1}\right]
\end{align*}
Note that in the righthand side of the third equation, we have used the property that $L^{p+1} \ast_r \alpha_k= 0$ and also added the term $ - \om^{n+2p+1-k} \dpp \beta_{k-2p-1}$ since it is identically zero.
The above equations give the result
\begin{align*}
d_{\cC} G + G d_{\cC} & = \om^{n-k+2p+1}  (\alpha_{k-2p-1} + \om^{p+1}  \beta_{k-4p-3} + \dots) \\ & \quad  + \thp   \om^{n-k}   \left[-\om^p    \dpp \beta_{k-2p-1}  + (\om^{p+1}   \beta_{k-2p-2} + \ldots) \right]
\end{align*}
which is exactly the lefthand side.
\end{proof}
Though $f$ and $g$ are chain maps, they are clearly not compatible with the product structure of the algebras $\cC$ and $\cF$.  For instance, it is not difficult to see that for generic $a_1, a_2 \in \cF\,$, $g\, m^2_\cF(a_1 \tensor\, a_2) \neq m^2_\cC(ga_1 \tensor \,ga_2)$. So we treat the cdga $\cC$ as an $A_\infty$-algebra.  Recall that a sequence of maps $g^l : \cF^{\tensor l} \to \cC[1-l]$ with $l=1, 2, \ldots\,$ is called an $A_\infty$ map if the following equations are satisfied (see for example Section~3.4 of~\cite{keller-intro}):
\begin{align}\label{Ainmap}
	\sum_{r + s + t = n} (-1)^{r + st}\,g^l (1^{\tensor r} \tensor m^s_\cF \tensor 1^{\tensor t})
	=
	\sum_{i_1 + \ldots + i_q = n} (-1)^{u}\,m^q_\cC(g^{i_1} \tensor \ldots g^{i_q}), \quad n \geq 1
\end{align}
where $l=r+1+t$ and the sign on the right-hand side is given by
	\eqnn
	u = (q - 1)(i_1 - 1) + (q-2) (i_2 - 1) + \ldots + 2(i_{q-2}-1) + (i_{q-1}-1).
	\eqnnd
\begin{theorem}\label{thm.Aoo-map}
The sequence of maps $g^l : \cF^{\tensor l} \to \cC[1-l]$ given by 
\enum
		\item $g^1 = g$,
		\item $
			g^2 = -\thp L^{-(p+1)} m^2_{\cC}(g^1 \tensor g^1),$
			
		\item $g^l = 0$ for all $l \geq 3$.
	\enumd
defines an $A_\infty$-algebra map $\cF \to \cC$.
\end{theorem}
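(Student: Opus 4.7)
The plan is to verify the $A_\infty$-map equations \eqref{Ainmap} for all $n \geq 1$. Because $g^l = 0$ for $l \geq 3$, $m^l_\cC = 0$ for $l \geq 3$, and $m^l_\cF = 0$ for $l \geq 4$, a straightforward inspection of both sides of \eqref{Ainmap} shows they vanish identically whenever $n \geq 5$, so only $n \in \{1,2,3,4\}$ require attention. The case $n = 1$ is the chain-map relation $g\,d_\cF = d_\cC g$, which is Lemma~\ref{lemma.chain}. For $n = 4$, after discarding all terms containing $g^{\geq 3}$, $m^{\geq 4}_\cF$, or $m^{\geq 3}_\cC$, the right-hand side collapses to $-m^2_\cC(g^2 \otimes g^2)$, which vanishes because $g^2$ takes values in the $\theta$-ideal of $\cC$ and $\theta^2 = 0$ (since $|\theta| = 2p+1$ is odd). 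The left-hand side reduces to terms of the form $g^2(m^3_\cF(a_1,a_2,a_3), a_4)$ and $g^2(a_1, m^3_\cF(a_2,a_3,a_4))$; each vanishes by the same mechanism, since $m^3_\cF$ lands in the ``bar'' part of $\cF$, hence $g$ of its output lies in $\theta \cdot \Omega(M)$, and the inner wedge product in the definition of $g^2$ then acquires a factor $\theta^2 = 0$. In fact this observation shows more broadly that $g^2(b,c) = 0$ whenever either argument lies in $\overline{F^p\Omega}$.

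The substance of the argument is $n = 2$ and $n = 3$. For $n = 2$, I would verify the identity
\begin{equation*}
g\,m^2_\cF(a_1,a_2) - m^2_\cC(g(a_1), g(a_2)) = d_\cC g^2(a_1,a_2) + g^2(d_\cF a_1, a_2) + (-1)^{|a_1|} g^2(a_1, d_\cF a_2)
\end{equation*}
by expanding both sides case-by-case, splitting according to whether each $a_i$ lies in $F^p\Omega$ or $\overline{F^p\Omega}$, and whether $|a_1|+|a_2|$ is at most or greater than $n+p$. The computation is driven by two identities: $\Pi^p = 1 - L^{p+1}\circ L^{-(p+1)}$, which measures the failure of a wedge of $p$-filtered forms to remain $p$-filtered; and the Leibniz rule $d_\cC(\theta \xi) = \omega^{p+1} \xi - \theta\,d\xi$, which produces an $\omega^{p+1} L^{-(p+1)}(a_1 \wedge a_2) = L^{p+1} L^{-(p+1)}(a_1 \wedge a_2)$ contribution to $d_\cC g^2(a_1,a_2)$ that precisely cancels the non-$\theta$ defect on the left-hand side. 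The $\theta$-coefficients of the remaining identity are then matched using $\ddp = \Pi^p \circ d$ on filtered forms and the Lefschetz decomposition to rewrite $L^{-(p+1)}\circ d$ in terms of $\dpm$.

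For $n = 3$, the relation is only nontrivial in the regime in which $m^3_\cF$ is defined, namely $|a_i| \leq n+p$ for all $i$ and $|a_1|+|a_2|+|a_3| \geq n+p+2$. By the above vanishing, the summands in $g^2(m^2_\cF(a_1,a_2), a_3)$ and its counterpart survive only when $|a_1|+|a_2| \leq n+p$ and $|a_2|+|a_3| \leq n+p$ respectively, and in these ranges $m^2_\cF = \Pi^p(- \wedge -)$. Modulo $\theta^2 = 0$, each side of the $n=3$ equation then reduces to a single $\theta$-valued expression built from $L^{-(p+1)}$, the wedge product, and $\ast_r$. The two sides agree because $m^3_\cF(a_1,a_2,a_3) = \Pi^p \ast_r \bigl[a_1 \wedge L^{-(p+1)}(a_2 \wedge a_3) - L^{-(p+1)}(a_1 \wedge a_2) \wedge a_3\bigr]$ is by construction the associativity defect of the non-associative operation $(a,b) \mapsto L^{-(p+1)}(a \wedge b)$ that underlies $g^2$; this is exactly what is needed to close the equation.

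The main obstacle is the bookkeeping in the $n = 2$ case, where the largest number of sub-cases appear (bar vs.\ non-bar arguments; degree sum above, at, or below the threshold $n+p$ where the middle map $-\dpp\dpm$ kicks in) and one must carefully track Koszul signs associated to the odd generator $\theta$. The $n = 3$ verification is conceptually transparent—essentially saying ``$g^2$ captures the associativity defect that $m^3_\cF$ was designed to measure''—but still requires matching the explicit formula for $m^3_\cF$ term-by-term against $m^2_\cC(g \otimes g^2) - m^2_\cC(g^2 \otimes g)$.
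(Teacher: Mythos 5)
Your proposal follows essentially the same route as the paper's proof: the same reduction to the four equations $n=1,\dots,4$, the same vanishing arguments for $n=4$ (via $\theta\wedge\theta=0$ and the vanishing of $g^2$ when an argument lies in the bar part, which is the paper's Lemma~\ref{lemma.formulas}), and the same key mechanisms ($\Pi^p = 1 - L^{p+1}\circ L^{-(p+1)}$, the $\omega^{p+1}$ contribution from $d_\cC\theta$, the commutator $[d,L^{-(p+1)}]$, and the identification of $m^3_\cF$ as the associativity defect of $(a,b)\mapsto L^{-(p+1)}(a\wedge b)$) for $n=2,3$. The only organizational difference is that the paper packages the $n=2$ bookkeeping into the graded anticommutator $\{m^1_\cC,\theta L^{-(p+1)}\}$ rather than expanding fully case by case, but the content is identical.
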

We will give the proof of this theorem in the next subsection.  Since the map $g$ is an quasi-isomorphism, Theorem~\ref{thm.Aoo-map} gives as an immediate corollary  Theorem \ref{thm.cone}, that is, $\cpF  \simeq \cone(\omega^{p+1})\simeq \Omega(E_p)$ as $A_\infty$-algebras.

To simplify notation in sections following, we will also refer the family of $A_\infty$ maps as $g$ as well.

\subsection{Proof of Theorem~\ref{thm.Aoo-map}}

For the proof, we will make use of a few useful formulas:

\begin{lemma}\label{lemma.formulas}
Let $\eta,\xi$ be forms on $M$ with $|\xi| = |\eta| - (2p+1)$. The following hold:
\enum
	\item\label{item.[m,L]}
			\eqnn
				\{m^1_\cC, \theta L^{-(p+1)}\} (\eta + \theta \xi)
				=
				\omega^{p+1} L^{-(p+1)} \eta - \theta
				\left([d,L^{-(p+1)}] \eta - L^{-(p+1)} \omega^{p+1} \xi\right).
			\eqnnd
	\item\label{item.[d,L]}
			\eqnn
				[d,L^{-(p+1)}] \eta = 
				\begin{cases}
					- L^{(p+1)} d\, \pip \eta& |\eta| \leq n + p \\
					 \pips d L^{-(p+1)} \eta & |\eta| > n +p 
				\end{cases}
			\eqnnd
	\item\label{item.g2ab}
		The $g^2$ operation on any $a_1, a_2\in \cF$ can be expressed as
    	\eqnn
      	g^2(a_1,a_2)
      		= 
    			\begin{cases}
    			-\theta L^{-(p+1)}(a_1\wedge a_2) & |a_1|,|a_2| < n + p \\
    			0 & \text{otherwise}
    			\end{cases}
    	\eqnnd
\enumd
\end{lemma}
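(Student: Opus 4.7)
The plan is to prove the three items in order, since items (1) and (2) are the computational inputs needed for (3). Item (1) is a direct calculation with the cone differential. Since $d\theta = \omega^{p+1}$, the differential on $\cC$ acts as $m^1_\cC(\eta + \theta \xi) = d\eta + \omega^{p+1}\xi - \theta\, d\xi$. The operator $\theta L^{-(p+1)}$ has odd total degree, so the graded bracket is the anticommutator. The composition $\theta L^{-(p+1)} \circ m^1_\cC$ simplifies because $\theta^2 = 0$ kills the $\theta\, d\xi$ contribution, while $m^1_\cC \circ \theta L^{-(p+1)}$ expands cleanly via the product rule for $\theta$ (only the $\eta$-summand contributes, again by $\theta^2 = 0$). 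Collecting terms by $\theta$-degree yields the stated identity.

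For item (2), the key inputs are that $d$ commutes with $L$ (because $d\omega = 0$), together with the projector identities $\pip = 1 - L^{p+1} L^{-(p+1)}$ and $\pips = 1 - L^{-(p+1)} L^{p+1}$ recalled in Section~2. For $|\eta| \leq n+p$, I would decompose $\eta = \pip \eta + L^{p+1} L^{-(p+1)} \eta$ and commute $d$ through $L^{p+1}$ to isolate $[d, L^{-(p+1)}]\eta$; the criterion in Lemma~\ref{lemma.filtered} tells us exactly when $L^{-(p+1)} L^{p+1}$ acts as the identity on a given filtered component, pinning down the surviving term. The case $|\eta| > n+p$ is handled dually via the $\rstar$-conjugate identity, with $\pips$ replacing $\pip$, and using the dual criterion $L^{p+1} \rstar \alpha_k = 0$ from Lemma~\ref{lemma.filtered}(iv). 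The careful case analysis across the boundary degree $n+p$, together with the correct identification of which projector survives, is the step I expect to be the main obstacle.

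Item (3) follows by direct substitution. By Definition~\ref{defn.fg}, $g(a_i) = a_i - \theta L^{-(p+1)} d a_i$ when $|a_i| \leq n+p$, and $g(a_i) = -\theta \rstar a_i$ otherwise. Expanding $m^2_\cC(g(a_1) \tensor g(a_2))$ in $\cC$ and then applying the outer $\theta L^{-(p+1)}$ from the definition of $g^2$, every contribution carrying an internal $\theta$ factor vanishes by $\theta^2 = 0$. When both $|a_i| \leq n+p$, the only surviving term is $-\theta L^{-(p+1)}(a_1 \w a_2)$, giving the stated formula. When any $|a_i| > n+p$, every summand of $g(a_1) g(a_2)$ already carries at least one $\theta$, so the outer $\theta L^{-(p+1)}$ annihilates the whole expression, and $g^2(a_1, a_2) = 0$.
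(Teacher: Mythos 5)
Your proposal is correct, and items (1) and (3) proceed essentially as in the paper: (1) is the same two-line expansion of the anticommutator using $\theta \w \theta = 0$, and (3) is the same observation that the outer $\theta L^{-(p+1)}$ kills every summand of $m^2_\cC(g^1\tensor g^1)$ carrying an internal $\theta$. For item (2) you take a genuinely different route. The paper writes out the Lefschetz decomposition $\eta = \alpha_k + \omega^{p+1}\beta_{k-2p-2}+\ldots$ and computes $dL^{-(p+1)}\eta$ and $L^{-(p+1)}d\eta$ component by component using $d=\dpp+\omega\w\dpm$, reading off the single surviving term $-\dpm\beta_{k-2p}$ (resp.\ $\omega^{r-1}\dpp\beta_{n-p-r}$). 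Your operator-identity route --- decompose $\eta = \pip\eta + L^{p+1}L^{-(p+1)}\eta$, use $[d,L^{p+1}]=0$ and $L^{-(p+1)}\pip=0$ --- yields the uniform identity $[d,L^{-(p+1)}]\eta = \pips\, dL^{-(p+1)}\eta - L^{-(p+1)}d\,\pip\eta$ in all degrees; the two cases then drop out because $\pips$ annihilates every form of degree at most $n-p-1$ (killing the first term when $|\eta|\leq n+p$) and $\pip\eta=0$ when $|\eta|>n+p$. This is cleaner and less error-prone than the component computation, at the cost of the degree bookkeeping you rightly flag as the delicate step. Two small points in your favor: your derivation produces $-L^{-(p+1)}d\,\pip\eta$, which is what the paper actually uses downstream (the $L^{(p+1)}$ in the printed statement is a typo); and your case split in (3), with $|a_i|\leq n+p$ rather than $|a_i|<n+p$, is the one consistent with Definition~\ref{defn.fg} (which gives $g^1\alpha_{n+p}$ a non-$\theta$ term), and it is all that the $n=3$ and $n=4$ equations require.
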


\begin{proof}[Proof of Lemma~\ref{lemma.formulas}.]
(\ref{item.[m,L]}): $\{,\}$ is the graded commutator; since $m_\cC$ and $\theta L^{-(p+1)}$ have odd degree, this means $\{m^1_\cC, \theta L^{-(p+1)}\} = m^1_\cC \theta L^{-(p+1)} +\theta L^{-(p+1)}m^1_\cC$. 
We have
	\begin{align*}
		m^1_\cC (\theta L^{-(p+1)})(\eta + \theta \xi) = m^1_\cC( \theta L^{-(p+1)}\eta) = \omega L^{-(p+1)}\eta - \theta dL^{-(p+1)} \eta\nonumber
	\end{align*}
and
	\begin{align*}
		\theta L^{-(p+1)} m^1_\cC (\eta + \theta \xi)
	= \theta L^{-(p+1)} (d \eta + \omega \xi - \theta d\xi) = \theta(L^{-(p+1)} d \eta + L^{-(p+1)}\omega \xi).
	\end{align*}
The result follows.

(\ref{item.[d,L]}): Decomposing
	\eqnn
		\eta =  \alpha_k + \omega^{p+1} \beta_{k-2p-2} + \ldots
	\eqnnd
we have that
	\eqnn
	dL^{-(p+1)} \eta 
		= d(\beta_{k-2p-2} + \ldots) 
		= \del_+ \beta_{k-2p-2} + \omega (\del_- \beta_{k-2p-2} + \del_+ \beta_{k-2p-4}) + \ldots
	\eqnnd
while
	\eqnn
	L^{-(p+1)} d \eta 
		= L^{-(p+1)} ( \ldots + \omega^{p+1}(\del_- \beta_{k-2p} + \del_+ \beta_{k-2p-2}) + \ldots)
	\eqnnd
so we have
	\eqnn
	[d, L^{-(p+1)}] \eta = -\del_- \beta_{k-2p}
	\eqnnd
which is equal to $-L^{-(p+1)} d\,\pip \eta$ by definition of $\pip$. 

When $|\eta| >n+p$, let $|\eta| = n+p +r$, with $r > 0$.  Then,
	\begin{align*}
		(dL^{-(p+1)}- L^{-(p+1)}d) \eta
		&= (dL^{-(p+1)}- L^{-(p+1)}d) (\omega^{p+r}\beta_{n-p-r} + \omega^{p+r+1} \beta_{n-p-r-2} + \ldots) \\
		&= d(\omega^{r-1}\beta_{n-p-r} + \omega^{r}\beta_{n-p-r-2} + \ldots) 
		\\ & \qquad- L^{-(p+1)} (\omega^{p+r+1} (\del_- \beta_{n-p-r} + \del_+ \beta_{n-p-r-2}) + \ldots )\nonumber\\
		&= \omega^{r-1} \del_+ \beta_{n-p-r}\nonumber
	\end{align*}
which is what we claimed.

(\ref{item.g2ab}): By definition, we have that $g^2 = -\theta L^{-(p+1)} m^2_\cC (g^1 \tensor g^1)$. Note that if any term of $m^2_\cC(g^1 \tensor g^1)$ has a factor of $\theta$ in it, the operation $\theta L^{-(p+1)}$ renders it zero since $\theta \wedge \theta = 0$ in $\cC$. On the other hand, $g^1 a$ for $a \in \cF$ has a term without a $\theta$ only when $|a| < n+p$. 
\end{proof}

\begin{proof}[Proof of Theorem~\ref{thm.Aoo-map}]
We need to show that the sequence of maps $g^l : \cF^{\tensor l} \to \cC[1-l]$ satisfy the $A_\infty$ map condition  of \eqref{Ainmap}  	
	\eqnn
	\sum_{r + s + t = n} (-1)^{r + st}\,g^l (1^{\tensor r} \tensor m^s_\cF \tensor 1^{\tensor t})
	=
	\sum_{i_1 + \ldots + i_q = n} (-1)^{u}\,m^q_\cC(g^{i_1} \tensor \ldots g^{i_q}), \qquad n \geq 1
	\eqnnd
where $l=r+1+t$ and the sign on the right-hand side is given by
	\eqnn
	u = (q - 1)(i_1 - 1) + (q-2) (i_2 - 1) + \ldots + 2(i_{q-2}-1) + (i_{q-1}-1).
	\eqnnd
Noting that $m^k_\cC = 0$ for $k \geq 3$ and  $m^k_\cF = 0 $ for $k \geq 4$, and further, that $g^k = 0$ for $k \geq 3$, \eqref{Ainmap} becomes the following four equations:
	\begin{align}
			m^1_\cC\, g^1 
				= &g^1 m^1_\cF \tag{n=1}
			\\
			m^1_\cC \,g^2 
				+ g^2(m^1_\cF \tensor 1 + 1 \tensor m^1_\cF)+ m^2_\cC(g^1 \tensor g^1)
				= &g^1m^2_\cF \tag{n=2}\\
			g^2( 1 \tensor m^2_\cF -m^2_\cF \tensor 1)+ m^2_\cC(g^1 \tensor g^2 - g^2 \tensor g^1)=&g^1 m^3_\cF 
				 \tag{n=3} \\
			g^2(1 \tensor m^3_\cF + m^3_\cF \tensor 1) + m^2_\cC(g^2 \tensor g^2) = & 0~ .\tag{n=4}
	\end{align}
	
{\bf The $n=1$ equation.} We verified $g = g^1$ is a chain map in the proof of Lemma~\ref{lemma.chain}. 

{\bf The $n=2$ equation.} We first simplify the expression involving $g^2$ terms:
	\begin{align}
	m_\cC^1 &g^2 + g^2(m^1_\cF \tensor 1 + 1 \tensor m^1_\cF)\\
		&= m_\cC^1 (-\theta L^{-(p+1)} m^2_\cC(g^1 \tensor g^1)) + (-\theta L^{-(p+1)} m^2_\cC(g^1 \tensor g^1))(m^1_\cF \tensor 1 + 1 \tensor m^1_\cF) \nonumber \\
		&= -m_\cC^1 (\theta L^{-(p+1)}) m^2_\cC(g^1 \tensor g^1) + (-\theta L^{-(p+1)} m^2_\cC)(m^1_\cC \tensor 1 + 1 \tensor m^1_\cC)(g^1 \tensor g^1) \nonumber \\
		&= -m_\cC^1 (\theta L^{-(p+1)}) m^2_\cC(g^1 \tensor g^1) - (\theta L^{-(p+1)} m^1_\cC m^2_\cC)(g^1 \tensor g^1) \nonumber \\
		&= -\left(m_\cC^1 (\theta L^{-(p+1)}) + \theta L^{-(p+1)} m_\cC^1 \right) m^2_\cC(g^1 \tensor g^1)  \nonumber \\
		&= -\{m_\cC^1 ,\theta L^{-(p+1)}\}m^2_\cC(g^1 \tensor g^1)~.  \nonumber
	\end{align}
We have used that $g^1$ is a chain map, and that the operations $m^1_\cC$ and $m^2_\cC$ satisfy the Leibniz rule. So the left-hand side of the $n=2$ equation becomes
\begin{align}
m_\cC^1 g^2 + g^2(m^1_\cF \tensor 1 + 1 \tensor m^1_\cF) + m^2_\cC(g^1 \tensor g^1)
		&= \left(-\{m_\cC^1 ,\theta L^{-(p+1)}\} + 1\right) m^2_\cC(g^1 \tensor g^1)~.
\end{align}

Let $a_1, a_2$ be elements of $\cF$.  Let us write
	\eqnn
	m^2_\cC(g^1 \tensor g^1)(a_1 \tensor a_2) = \eta + \theta\, \xi .
	\eqnnd
Then, by Lemma~\ref{lemma.formulas}(\ref{item.[m,L]}), we find
	\begin{align}
	&\left(m_\cC^1 g^2 + g^2(m^1_\cF \tensor 1 + 1 \tensor m^1_\cF)+m^2_\cC(g^1 \tensor g^1) \right)(a_1 \tensor a_2) \nonumber\\
		&\qquad\qquad= \left(-\{m^1_\cC,\theta L^{-(p+1)}\}+1 \right) (\eta + \theta \xi) \nonumber \\
		&\qquad\qquad= (1- \omega^{p+1} L^{-(p+1)}) \eta + \theta \left([d,L^{-(p+1)}] \eta + (1- L^{-(p+1)} \omega^{p+1}) \xi \right) \nonumber\\
		&\qquad\qquad= \pip \eta + \theta \left([d,L^{-(p+1)}] \eta + \pips \xi \right) . 
		\label{eqn.n2left}
	\end{align}
The expression for $\eta$ and $\xi$ are found by calculating $m^2_\cC(g^1 \tensor g^1)(a_1 \tensor a_2)$ explicitly.   Recalling the definitions of $g^1$ in Definition~\ref{defn.fg}, we find that $m^2_\cC(g^1 \tensor g^1)(a_1 \tensor a_2)$ equals
\begin{align*}\label{eqn.m2g}
			\begin{cases}
			a_1 \w a_2 - \theta(L^{-(p+1)} da_1 \w a_2 + (-1)^{a_1} a_1 \w L^{-(p+1)}d a_2)
				& |a_1|,|a_2| \leq n+p \\
			-  \theta ( (-1)^{a_1} a_1 \w \ast_r a_2) & |a_1| \leq n+p, |a_2| > n+p \\ 
			- \theta (\ast_r a_1) \w a_2
				& |a_1| > n+p, |a_2| \leq n+p \\
			0 & |a_1|,|a_2| > n+p.
			\end{cases}
\end{align*}
We can now read off the expression for $\eta$ and $\xi$ and substitute them  into \eqref{eqn.n2left}.  In the case of $|a_1|,|a_2| \leq n$, we have 
\begin{align}
\eta = a_1 \w a_2 ~, \qquad \xi= - L^{-(p+1)} da_1 \w a_2 - (-1)^{a_1} a_1 \w L^{-(p+1)}d a_2~.
\end{align}
Using Lemma \ref{lemma.formulas}\eqref{item.[d,L]}, we find in this case that
\begin{align*}
\pip \eta &+ \theta \left([d,L^{-(p+1)}] \eta + \pips \xi \right)\\ 
& =
\begin{cases}
 \pip (a_1\w a_2) - L^{-(p+1)} d\,\pip(a_1\w a_2)   & |a_1| + |a_2| \leq n+p \\
 \theta \pips \big[dL^{-(p+1)}(a_1\w a_2)&  |a_1| + |a_2| > n+p \\ 
 \qquad\quad - ( L^{-(p+1)} da_1) \w a_2 - (-1)^{a_1} a_1 \w L^{-(p+1)}d a_2\big] &  
\end{cases}
\end{align*}
The other three cases in \eqref{eqn.n2left} are straightforward.  All in all, we find that the left-hand side of the $n=2$ condition matches exactly the right-hand side (using the definition of $m^2_{\cF}$ in Section~\ref{section.AooF}) given by
\begin{align*}\label{eqn.g1m2}
g^1&m^2_\cF(a_1 \tensor a_2) \\
		&=
		\begin{cases}
			\pip(a_1 \w a_2) - \theta L^{-(p+1)}d\, \pip(a_1 \w a_2) & |a_1| + |a_2| \leq n+p \\
			-\theta\, \pips (-dL^{-(p+1)} (a_1 \w a_2)& |a_1|+|a_2| > n+p\,\\
			 \qquad \qquad + L^{-(p+1)}da_1 \w a_2 + (-1)^{a_1} a_1 \w L^{-(p+1)}d a_2) 
			 &~  {with}~~ |a_1|,|a_2| \leq n+p \\
			-\theta\, \pips ( (-1)^{a_1} a_1 \w \ast_r a_2) & |a_1| \leq n+p, |a_2| > n+p \\
			-\theta\, \pips (\ast_r a_1 \w a_2) & |a_1| > n+p, |a_2| \leq n+p \\
			0 & |a_1|,|a_2| > n+p
		\end{cases}
\end{align*}

{\bf The $n=3$ equation.} Let  $a_1, a_2, a_3$ be elements of $\cF$.  Consider first the left-hand side of the equation:
\begin{align*}
[g^2( 1 \tensor m^2_\cF -m^2_\cF \tensor 1)+ m^2_\cC(g^1 \tensor g^2 - g^2 \tensor g^1)](a_1 \tensor a_2 \tensor a_3)~.
\end{align*}
Notice that each term vanishes unless $|a_1|, |a_2|, |a_3| \leq n+p$.  For the first term, this is due to Lemma \ref{lemma.formulas}\eqref{item.g2ab}.   For the second term, since the $g^2$ map contains a $\theta$, a $\theta$ in the $g^1$ map term would result in a null $m^2_\cC$.  This forces the element which $g^1$ acts on to have degree less than or equal to $n+p$.   Hence, we only need to consider the case where $|a_1|, |a_2|, |a_3| \leq n+p$.  We note that this is a necessary condition for $m^3_\cF(a_1 \tensor a_2 \tensor a_3)$ to be non-trivial.  

For the first term, we have 
\begin{align*}
g^2( &1 \tensor m^2_\cF -m^2_\cF \tensor 1)(a_1 \tensor a_2 \tensor a_3)\\ 
&= -\theta L^{-(p+1)} m^2_\cC\left[ a_1 \tensor (m^2_\cF(a_2\tensor a_3)) -  m^2_\cF(a_1\tensor a_2) \tensor a_3\right] \\
&= -\theta L^{-(p+1)}\left[a_1 \w \pip(a_2\w a_3) -\pip(a_1\w a_2) \w a_3\right]\\
&=-\theta   L^{-(p+1)}L^{p+1}\left[a_1 \w L^{-(p+1)}(a_2\w a_3) - L^{-(p+1)}(a_1\w a_2) \w a_3\right]
\end{align*}
where in the third line, we omitted the term including $\theta$ in the $m^2_\cF$, and in the third line, we used the identity $\pip = 1 - L^{p+1}L^{-(p+1)}$.

For the second term, we have
\begin{align*}
m^2_\cC(&g^1 \tensor g^2 - g^2 \tensor g^1)(a_1 \tensor a_2 \tensor a_3) \\
&=m^2_\cC \left[g^1 \tensor (-\theta L^{-(p+1)})m^2_\cC(g^1 \tensor g^1) - (-\theta L^{-(p+1)})m^2_\cC(g^1 \tensor g^1)\tensor g^1\right](a_1 \tensor a_2 \tensor a_3) \\
&=-\theta\left[a_1 \w L^{-(p+1)}(a_2\w a_3)  -  L^{-(p+1)}(a_1\w a_2) \w a_3\right] 
\end{align*}
Combining the two expressions, we find for the left-hand side of the equation
\begin{align*}
[g^2( &1 \tensor m^2_\cF -m^2_\cF \tensor 1)+ m^2_\cC(g^1 \tensor g^2 - g^2 \tensor g^1)](a_1 \tensor a_2 \tensor a_3)\\
&= -\theta (1 -  L^{-(p+1)}L^{p+1}) \left[a_1 \w L^{-(p+1)}(a_2\w a_3)  -  L^{-(p+1)}(a_1\w a_2) \w a_3\right]  \\
& = - \theta \, \pips \left[a_1 \w L^{-(p+1)}(a_2\w a_3)  -  L^{-(p+1)}(a_1\w a_2) \w a_3\right] \\
& = g^1 m^3_\cF (a_1 \tensor a_2 \tensor a_3)
\end{align*}

{\bf The $n=4$ equation.} This equation is satisfied because every term is identically zero. To wit:
	\begin{itemize}
		\item
			$g^2(1 \tensor m^3_\cF + m^3_\cF \tensor 1)$ is zero for the following reason: $m^3_\cF$ is non-zero only when $m^3_\cF$ outputs an element of degree $\geq n +p +1$. Thus, by Lemma~\ref{lemma.formulas}(\ref{item.g2ab}), $g^2(1 \tensor m^3_\cF)$ and $g^2(m^3_\cF \tensor 1)$ are both identically zero.
		\item By definition, the output of $g^2$ involves a $\theta$, an odd degree form.   Thus, $m^2_\cC(g^2 \tensor g^2)$ vanishes since it involves the wedge product of $\theta$ with itself.
	\end{itemize}
\end{proof}

\section{Calabi-Yau structure on \texorpdfstring{$\cF$}{F}}
What we call a Calabi-Yau structure follows the terminology of~\cite{costello}, though this notion has also been called a {\em proper} Calabi-Yau structure~\cite{gps}, a {\em right} Calabi-Yau structure~\cite{brav-dyckerhoff}, and a ``symplectic structure on a formal pointed dg-manifold''~\cite{kontsevich-soibelman-Aoo}.\footnote{This is to contrast it with a different notion of Calabi-Yau-ness, which has been called a {\em smooth} Calabi-Yau structure~\cite{gps} and a left Calabi-Yau structure~\cite{brav-dyckerhoff} elsewhere. A category or algebra may be smoothly Calabi-Yau but not properly Calabi-Yau and vice versa; the distinction is important, for instance, as these different structures gives rise to two-dimensional field theories that are only partially defined, and the class of cobordisms on which the field theories are defined differs by the type of Calabi-Yau-ness one equips onto the category. We refer the reader to Section~6 of~\cite{gps} for a more thorough account.}

\begin{defn}\label{CY-swap}
An $A_\infty$-algebra $A$ over a base ring $k$, together with a bilinear map
	\eqnn
	\langle \, , \, \rangle
	:	A \tensor A \to k[-D]
	\eqnnd
is called {\em Calabi-Yau of dimension $D$} if
	\begin{itemize}
		\item
			The pairing is non-degenerate on cohomology, 
		\item
			$\langle a, b \rangle = (-1)^{|a| |b|} \langle b, a \rangle$, and
		\item $\langle \, , \, \rangle$ satisfies the cyclic symmetry equation
			\eqn\label{eqn.cy}
			\langle m^{l} (a_1,\ldots,a_{l}), a_{l+1} \rangle
			=
			(-1)^{l + |a_1| (|a_2|+ \ldots + |a_{l+1}|)}
			\langle m^{l} (a_2,\ldots,a_{l+1}), a_1\rangle .
			\eqnd
	\end{itemize}
\end{defn}

\begin{example}\label{example.oriented-compact}
Let $X$ be a compact oriented manifold of dimension $D$. Then the cdga $\Omega(X)$ can be given the structure of a Calabi-Yau algebra of dimension $D$ by the following pairing:
	\eqnn
		\eta \tensor \xi \mapsto \int_X \eta \wedge \xi.
	\eqnnd
Note that if $\eta$ and $\xi$ do not have complementary degree, the integral vanishes.
\end{example}

\begin{remark}
Calabi-Yau algebras were first defined by Kontsevich to formalize properties of $\dbcoh$ of a smooth Calabi-Yau variety. It is a theorem of Costello~\cite{costello} that Calabi-Yau algebras, and Calabi-Yau categories more generally, define a topological CFT. For further discussion, see also Ginzburg~\cite{ginzburg}, and the work of Brav-Dyckerhoff~\cite{brav-dyckerhoff} generalizing the notion to {\em relative} Calabi-Yau structures.
\end{remark}

If our symplectic manifold $M$ is compact and $[\omega]$ is an integral class in cohomology, then $\cC$ is also a Calabi-Yau algebra. This is because the mapping cone $\cone(\omega^{p+1})$ can be identified (as a cdga) with the minimal model for the the cdga $\Omega(E_p)$---the differential forms on the total space $E_p$ of the sphere bundle associated to $\omega^{p+1}$.  (See Theorem~\ref{theorem.appendix}.)  Furthermore, since $\omega$ is symplectic, $E_p$ comes with an orientation. If $M$ is compact, so is $E$, hence Example~\ref{example.oriented-compact} implies that $\Omega(E_p) \simeq \cC$ is Calabi-Yau. 

The algebra equivalence of $\cC$ and $\cF$ however holds for all $\om$ regardless of its integrality.  Since the Calabi-Yau property is algebraic in nature, it is reasonable to expect that whether $\cF$ exhibits the Calabi-Yau property or not should not depend on the integrality of $[\om]$.  Indeed, we can prove directly on $M$ that $\cF$ is a Calabi-Yau algebra for all compact $(M^{2n}, \om)$.   

To define the pairing on $\cF$, we make use of the map $\cpF \to \RR[-2(n+p)-1)]$ given by
	\eqnn
	\BFO{0} \xra{\rstar} \Omega^{2n}(M) \xra{\int_M} \RR.
	\eqnnd
For degree reasons, this map is zero on all other degree components of $\cF$.   The map gives us a natural pairing for $\cF$.  

\begin{defn}[The Calabi-Yau structure on $\cF$]\label{defn.F-CY}
Define the pairing on $\cF$ by the composition
\eqnn
\xymatrix{
\cF \tensor \cF \ar[dr]_-{\langle \, , \, \rangle\quad} \ar[r]^-{m^2} &\cF \ar[d]^-{\int_M \rstar}\\
 & \RR[-2(n+p)-1)].
}
\eqnnd
Explicitly, for $a_1\in \FO{k}$ and $a_2 \in \BFO{k}$,
	\eqn\label{eqn.pairing}
	\langle a_1, a_2 \rangle
	= \int_M \rstar [m^2(a_1, a_2)] = 
	\int_M (-1)^k a_1 \wedge \rstar\, a_2 = \int_M  \rstar\, a_2 \, \wedge a_1  = \langle a_2, a_1 \rangle~.
	\eqnd
\end{defn}

\begin{remark}\label{remark.symmetry}
Note that $m^2$ is already graded-commutative, so the second condition of Definition \ref{CY-swap} is automatically satisfied. It is also worth noting that if $a_1 \in \FO{k_1}$ and $a_2 \in\BFO{k_2}$, we must have that $k_2 = 2n + 2p +  1 - k_1$ if $\langle a_1, a_2 \rangle$ is non-zero.  In this case, $|a_1 | | a_2| = 0\mod 2$, and therefore, $\langle a_1, a_2\rangle  = \langle a_2, a_1 \rangle$ as consistent with \eqref{eqn.pairing}.
\end{remark}

The pairing \eqref{eqn.pairing} defined on $\cF$ is in fact compatible to the Calabi-Yau pairing $\langle\, , \, \rangle_{\cC}$ on $\cC$ induced from $\Om(E)$ when $\om$ is integral.  In this case, we have the following commutative diagram:
	\eqnn
		\xymatrix{
			\cF \tensor \cF \ar[drr]^-{-\langle \, , \, \rangle} \ar[d]_-{g\, \tensor\, g} & & \\
			\cC \tensor \, \cC \ar[rr]_-{\langle \, , \, \rangle_{\cC}}&& \RR[-2(n+p)-1]
		}
	\eqnnd
To verify this, consider $\alpha_k \in \cF^k$ and $\alpha'_k \in \cF^{2n+2p+1-k}$ of complementary degrees.  Recalling the definition of the chain map $g$ in Definition \ref{defn.fg}, we have
\begin{align*}
\langle g\alpha_k , g \alpha'_k \rangle_{\cC} &=
		\int_{E_p} (\alpha_k - \theta_{2p+1} L^{-(p+1)} d \alpha_k) \wedge (- \theta_{2p+1} \ast_r \alpha'_k)\\
		&=
		-\int_{E_p} \theta_{2p+1} \wedge (-1)^k \alpha_k \wedge \ast_r \alpha'_k\,\\
		&= - \int_{M}  (-1)^k \alpha_k \wedge \ast_r \alpha'_k = - \langle \alpha_k, \alpha'_k \rangle
\end{align*}
where in the last second line, we have noted that the equivalence of $\cone(\omega^{p+1}) \simeq \Omega(E_p)$ is compatible with the Thom isomorphism for differential forms; namely, $\theta$ represents the Thom class, and for any differential form $A$ on $M$,
\begin{align}\label{EM-integral-relation}
		\int_{E_p} \theta_{2p+1} \wedge A = \int_M A.
\end{align}

We now show that pairing on $\cF$ satisfy the Calabi-Yau algebra conditions for any symplectic structure. 
\begin{theorem}
Definition~\ref{defn.F-CY} endows $\cpF^\bullet$ with a Calabi-Yau structure of dimension 2(n+p)+1.
\end{theorem}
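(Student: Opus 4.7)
The plan is to verify the three defining axioms of a Calabi-Yau structure in Definition~\ref{CY-swap}—symmetry, non-degeneracy on cohomology, and cyclic invariance—by exploiting the explicit description of $\cF_p$ and the fact that $M$ is compact (so Stokes' theorem applies with no boundary terms).

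The symmetry axiom is almost immediate. By Remark~\ref{remark.symmetry}, $\langle a_1,a_2\rangle$ is non-zero only when $|a_1|+|a_2|=2n+2p+1$, which forces $|a_1|\,|a_2|\equiv 0\pmod 2$; since $m^2$ is already graded-commutative, the identity $\langle a_1,a_2\rangle = (-1)^{|a_1|\,|a_2|}\langle a_2,a_1\rangle$ reduces to the graded-commutativity of $m^2$ composed with $\int_M \ast_r$, which I would check directly on the four cases of Section~\ref{section.AooF}.

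For non-degeneracy on cohomology, I would give two arguments according to whether $[\omega]$ is integral. When $[\omega]$ is integral, the commutative diagram computed just before the theorem statement shows that $\langle g a_1, g a_2\rangle_\cC = -\langle a_1,a_2\rangle$, where $\langle\,,\,\rangle_\cC$ is Poincar\'e duality on the closed oriented manifold $E_p$; since $g$ is a quasi-isomorphism by Lemma~\ref{lemma.retractp}, non-degeneracy on $H^*(\cF_p)$ follows from non-degeneracy of Poincar\'e duality on $H^*(E_p)$. For general $\omega$, I would invoke Theorem~\ref{ellipticC}: ellipticity of the filtered complex lets us choose harmonic representatives and identify $F^pH^k_+ \cong F^pH^{2n+2p+1-k}_-$ via $\ast_r$, and the pairing $\int_M \alpha\wedge \ast_r\bar\alpha$ is manifestly non-degenerate on harmonic forms.

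Cyclic invariance \eqref{eqn.cy} needs to be checked for $l=1,2,3$ since $m^l=0$ for $l\ge 4$. For $l=1$, the identity $\langle m^1 a_1,a_2\rangle = -(-1)^{|a_1|\,|a_2|}\langle m^1 a_2, a_1\rangle$ is a direct consequence of Stokes' theorem applied to $d(a_1 \wedge \ast_r a_2)$, after decomposing $m^1$ into its $d_+$, $d_-$ and $\partial_+\partial_-$ pieces and using that these agree with $\pm d$ after appropriate $\ast_r$-conjugation (formulas~\eqref{ddpdef}--\eqref{ddmdef}). For $l=2$, both sides of \eqref{eqn.cy} compute, up to signs, $\int_M a_{\sigma(1)}\wedge a_{\sigma(2)}\wedge \ast_r a_{\sigma(3)}$ for a cyclic permutation $\sigma$; the key algebraic identities are $\pip = 1 - L^{p+1}L^{-(p+1)}$, the adjointness $\int_M L\eta\wedge\xi = \int_M \eta\wedge L\xi$, and the fact that under the integral, the projection $\pip$ and its reflection $\pips$ may be freely inserted or removed when the complementary-degree factor already lies in the appropriate filtered subspace. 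For $l=3$, I would use the explicit formula for $m^3$ and rewrite $\langle m^3(a_1,a_2,a_3),a_4\rangle$ as $\int_M \ast_r \Pi^p \ast_r[\cdots]\wedge a_4$; integration by parts moves $L^{-(p+1)}$ between slots, and the desired cyclic rotation emerges from the graded associativity of the wedge product combined with the symmetry $L^{-(p+1)}=\ast_r L^{p+1}\ast_r$.

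The main obstacle is the cyclic invariance at $l=3$: the operation $m^3$ is not visibly cyclic, and the bookkeeping of signs involves (i) the shift convention $\BFO{k}$ has degree $2n+2p+1-k$ in $\cF$, (ii) the graded Leibniz rule on $\Omega(M)$, (iii) the interplay of $\Pi^p$, $\Pi^{p*}$, $\ast_r$ and $L^{-(p+1)}$, and (iv) Stokes boundary-free terms. I expect the cleanest route is to first reduce the cyclic $l=3$ identity to an integral equality in $\Omega(M)$ of the form $\int_M D_L(a_{\sigma(1)},a_{\sigma(2)})\wedge\ast_r(\text{something})$ for the symbol $D_L$ of the proof of Section~\ref{section.AooF}, and then verify that identity by a single application of Stokes to the $3$-fold wedge $L^{-(p+1)}(a_1\wedge a_2\wedge a_3)$.
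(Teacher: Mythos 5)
Your overall architecture---checking symmetry, non-degeneracy, and cyclic invariance separately for $l=1,2,3$, with symmetry from Remark~\ref{remark.symmetry} and the $l=1$ case from Stokes---matches the paper's proof, but two of your steps have genuine gaps. The first is non-degeneracy for non-integral $\omega$. You assert that the pairing $\int_M \alpha\w\rstar\bar\alpha$ is ``manifestly non-degenerate on harmonic forms,'' but $\rstar$ is the Lefschetz reflection, not the Hodge star, so $\alpha\w\rstar\alpha$ carries no positivity and it is not clear that a given harmonic class pairs non-trivially with \emph{anything}. The paper's argument introduces a compatible triple $(\om,J,g)$ and the operators $\cJ$ and $\cR$ satisfying $\ast=\rstar\cR\cJ$; given a harmonic representative $a_1$ of $F^pH^k_+$ one must (i) verify, using the explicit formulas for the adjoints $d_+^*$ and $d_-^*$, that $\cR\cJ a_1$ is again harmonic for the $F^pH^k_-$ complex, and then (ii) compute $\langle a_1,\cR\cJ a_1\rangle=(-1)^k\|a_1\|^2\neq0$. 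Without producing this dual harmonic class and converting $\rstar$ into $\ast$, the non-degeneracy claim is unsupported. (Your argument via Poincar\'e duality on $E_p$ is fine but covers only integral $[\om]$, whereas the theorem is for all symplectic structures.)

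The second gap is cyclic invariance at $l=2$. You claim both sides reduce, up to sign, to $\int_M a_{\sigma(1)}\w a_{\sigma(2)}\w\rstar a_{\sigma(3)}$ using only the adjointness of $L$ and insertion/removal of $\pip$. That works when one input is barred, but when all three inputs are unbarred with $|a_1|+|a_2|>n+p$ the product $m^2(a_1,a_2)=\pip\rstar D_L(a_1,a_2)$ contains the derivative terms $-dL^{-(p+1)}(a_1\w a_2)$ and $(L^{-(p+1)}da_i)\w a_j$, which are not disposed of by any pointwise algebraic identity; this is exactly the hard case and your sketch glosses over it. The paper takes a genuinely different route here: it integrates the $A_\infty$ relation $m^2(1\tensor m^2)-m^2(m^2\tensor1)=m^1m^3+m^3(m^1\tensor1\tensor1+1\tensor m^1\tensor 1+1\tensor1\tensor m^1)$, so that the associativity defect of $m^2$ under $\int_M\rstar(\cdot)$ is traded for $m^3$ terms; the $m^1m^3$ term integrates to zero by Stokes because $m^3$ lands in $\BFO{1}$, and the remaining terms vanish identically by the filtration condition $L^{n+p-i}\FO{i+1}=0$. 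If you insist on the direct computation you must supply an analogous Stokes-plus-filtration argument for the $D_L$ terms. (Relatedly, your $l=3$ step needs no Stokes at all: moving $L^{-(p+1)}$ between slots is the algebraic adjointness identity, not integration by parts.)
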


\def\ddps{{d_+^{\ast}}}
\def\ddms{{d_-^{\ast}}}

\begin{proof}
{\bf Non-degeneracy.} The non-degeneracy of the pairing \eqref{eqn.pairing} on cohomology was previously shown in two special cases: (i) for $p=0$ and $k\neq n$ in Proposition~3.7 of~\cite{tseng-yau-2}; (ii) for $k=n+p$  in Proposition~3.26 of~\cite{tseng-yau}.  We will give the argument here for the remaining cases of $k<n+p$ (which follow the same techniques as the previous cases). 

We make use of a compatible triple, $(\om, J, g)$, introducing a compatible almost complex structure, $J$, and the associated Riemannian metric, $g$, on $(M^{2n}, \om)$.  Since the filtered cohomologies $F^pH(M)$ are associated with elliptic complexes (see Theorem \ref{ellipticC}), there is an elliptic Laplacian associated with each cohomology.  By Hodge theory, there then exists a unique harmonic representative for each cohomology class.  The harmonic representative for $k<n+p$ satisfies the following:
\begin{alignat*}{3}
\ddp \,a_1 &=0\,, \qquad \qquad \quad\ddm \,a_2 &&=0\,, \\
\ddps\,  a_1 &=0\,, \qquad \qquad\quad \ddms\, a_2 &&=0\,.
\end{alignat*}
for $a_1 \in \FO{k}$ and $a_2 \in \BFO{k}$ 
and the adjoint here is defined with respect to the standard inner product on differential forms 
\begin{align*}
(\eta_k , \,\xi_k) = \int_M \eta_k \w \ast\, \xi_k 
\end{align*}
where $\eta_k, \xi_k \in \Omega^k(M)$ and, $\ast\,$, the Hodge star operator.  Let us recall that the action of the Hodge star on the Lefschetz decomposed element, $\om^r \beta_s$, where $\beta_s\in P^s(M)$, is given by (see for example \cite{tseng-yau})
\begin{align*}
\ast \; \om^r \beta_s  = (-1)^{s(s+1)/2} \dfrac{r!}{(n-r-s)!}\, \om^{n-r-s} \cJ\beta_s 
\end{align*}
where
\begin{align*}
\cJ=\sum_{p,q} (\sqrt{-1}\,)^{p-q} \ \Pi^{p,q}
\end{align*}
with $\Pi^{p,q}$ defined to be the projection of a form into its $(p,q)$ components with respect to the almost complex structure $J\,$.  Introducing further the operator $\cR$ defined to be
\begin{align*}
\cR (\om^r \beta_s) := (-1)^{s(s+1)/2} \dfrac{r!}{(n-r-s)!} \,\om^r \beta_s\,,
\end{align*}
we then have the relation
\begin{align}\label{starrel}
 \ast =  \rstar \cR \cJ\,.
\end{align}
We can now express $\{\ddp, \ddm\}$ and their adjoints  acting on the space of $p$-filtered forms, $F^p\Om(M)$, as follows: 
\begin{alignat*}{3} 
\ddp &= \Pi^p\, d\,,  \qquad \qquad\quad\; \ddm &&= \rstar \, d \; \rstar\,,\\
\ddps & = - \ast \, d \;\ast \,,\qquad\qquad \ddms && = - \cR \cJ \, d\, \cR \cJ \, .
\end{alignat*}
Now, if $a_1 \in \FO{k}(M)$ is a harmonic representative of $F^pH^k_+(M)$, then $\ddm (\cR \cJ a_1) = \rstar\, d\, \ast a_1 =0\,,$ that is, $\cR \cJ (a_1)$ represents a cohomology class of $F^pH^k_-(M)$.  Similarly, if $a_2\in \BFO{k}(M)$ is a harmonic representative of $F^pH^k_-(M)$, then $\cR \cJ(a_2)$ represents a cohomology class of $F^pH^k_+(M)$.  Finally, to prove non-degeneracy of the pairing \eqref{eqn.pairing} on cohomology, choose $a_1$ to be a non-trivial harmonic representative of $F^pH^k_+(M)$.  Then setting $a_2 = \cR \cJ a_1$, we have
\begin{align*}
 \langle a_1, a_2 \rangle = \int_M  (-1)^k a_1 \w \rstar\, a_2 =(-1)^k \int_M  a_1 \w \rstar \cR\cJ a_1 = (-1)^k \|a_1\|^2 \neq 0\,,
 \end{align*} 
having applied \eqref{starrel}.  Similarly, for $a_2$ a non-trivial harmonic representative of $F^pH^k_-(M)$, we take $a_1 = \cR\cJ a_2$ and this results in
\begin{align*}
 \langle a_2, a_1 \rangle = \int_M \rstar\, a_2 \w a_1 = \int_M \rstar\, a_2  \w \cR\cJ a_2 = \int_M a_2  \w \rstar \cR\cJ a_2 = \|a_2\|^2 \neq 0\,,
\end{align*}
thereby proving the non-degeneracy of the pairing on $F^pH^k_{\pm}(M)$ for $k<n+p$.

{\bf Symmetry.} This is Remark~\ref{remark.symmetry}.

{\bf Cyclic symmetry for $l=1$.}  There are two cases that we need to consider.  First, fix $a_1 \in \FO{k-1}$,  $a_2 \in \BFO{k}$ and let $k\leq n+p$. Then the cyclic condition (\ref{eqn.cy}) becomes:
	\begin{align}
	\langle m^1 a_1,  a_2 \rangle
	&= \int_M \rstar\, a_2 \wedge \ddp a_1 \nonumber \\
	&= \int_M \rstar \,a_2 \wedge d a_1   \label{eqn.cy-check-1}\\
	&= (-1)^{k+1} \int_M d \rstar a_2 \wedge a_1 \label{eqn.cy-check-2}\\
	&= (-1)^k \int_M \rstar (-\ddm) a_2 \w a_1  = (-1)^{1+|a_1||a_2|} \langle m^1 a_2, a_1 \rangle	\nonumber
	\end{align}	
Here, (\ref{eqn.cy-check-1}) holds because $\ddp a_1 =  \pip d a_1 = da_1 - L^{p+1} L^{-(p+1)} da_1$, and since $a_2$ is a $p$-filtered form, $L^{p+1} \rstar a_2 = 0$. \eqref{eqn.cy-check-2} follows from integration by parts.  Note also that $|a_2| = 2(n+p)+1-k$ and therefore $|a_1||a_2| = k\mod 2$.  

For the second case, let $a_1, a_2 \in \FO{n+p}$.  Then,
	\begin{align*}
	\langle m^1 a_1, a_2 \rangle
	&=   \int_M \rstar (-d\Lambda d) a_1 \w a_2 = \int_M (-d\Lambda d)  a_1 \w \rstar a_2 \\
	&= (-1)^{n+p+1} \int_M \Lambda da_1 \w d \rstar a_2 =  (-1)^{n+p+1} \int_M  da_1 \w \Lambda d \rstar a_2 \\
	& = \int_M a_1 \w d\Lambda d\rstar a_2  = (-1)^{1+(n+p)^2} \int_M \rstar (-d\Lambda d) a_2 \w a_1 \\
	&= (-1)^{1+|a_1||a_2|} \langle m^1 a_2, a_1 \rangle
	\end{align*}
where we have noted that $\dpp\dpm = d\Lambda d$ which commutes with $\rstar$,  and also, for any $\eta\in \Omega^{k+2}$ and $\eta'\in \Omega^{2n-k}$,  
	\begin{align}\label{lambda-integ}
	\int_M \Lambda \eta \w \eta' = \int_M \eta \w \Lambda \eta'
	\end{align}
 
{\bf Cyclic symmetry for $l=2$.} The relation is
	\begin{align*}
	\langle m^2(a_1, a_2), a_3 \rangle &= (-1)^{|a_1|(|a_2|+|a_3|)} \langle m^2(a_2, a_3), a_1\rangle \\
	&= \langle a_1, m^2(a_2, a_3)\rangle
	\end{align*}
Hence, we need to show 
	\begin{align}\label{cy21}
	\int_M \rstar \left[ a_1 \times (a_2 \times a_3) - (a_1 \times a_2) \times a_3 \right] = 0 
	 \end{align}
for $|a_1|+|a_2|+|a_3| = 2n+2p+1$.  This follows from integrating the $A_\infty$ relations for $m^3$. Explicitly, the $A_\infty$ relation relevant here is
	\eqnn
m^2 (1 \tensor m^2)  - m^2(m^2 \tensor 1) = m^1(m^3) + m^3( m^1 \tensor 1 \tensor 1 + 1 \tensor m^1 \tensor 1 + 1 \tensor 1 \tensor m^1).
	\eqnnd
Plugging in elements $a_1, a_2, a_3$, the terms involving $m^2$ on the left-hand side are precisely those in (\ref{cy21}). So we can show instead that
 	\eqnn
	\int_M \rstar \left\{[m^1(m^3) + m^3( m^1 \tensor 1 \tensor 1 + 1 \tensor m^1 \tensor 1 + 1 \tensor 1 \tensor m^1) ](a_1 \tensor a_2 \tensor a_3)\right\} =0.
	\eqnnd
Notice that the first term vanishes since $m^3(a_1,a_2,a_3) \in \BFO{1}$ and therefore 
	\eqnn
	\int_M \rstar \left[m^1 m^3(a_1,a_2,a_3)\right] = -\int_M \rstar\, \ddm [m^3(a_1,a_2,a_3)] = -\int_M  d \left[\rstar  m^3(a_1,a_2,a_3)\right] = 0
	\eqnnd
by Stokes's Theorem. For the remaining terms, the only possible non-trivial  contribution of the integrand takes the form
	\begin{align}
	 & \ddp a_1 \wedge L^{-(p+1)}(a_2 \wedge a_3) - L^{-(p+1)}(\ddp a_1 \wedge a_2) \wedge a_3 \nonumber \\
	 & + (-1)^{i}\left[a_1 \wedge L^{-(p+1)}(\ddp a_2 \wedge a_3) - L^{-(p+1)}(a_1 \wedge \ddp a_2) \wedge a_3\right] \label{cy22}\\
	 & +(-1)^{i+j}\left[a_1 \wedge L^{-(p+1)}(a_2 \wedge \ddp a_3) - L^{-(p+1)}(a_1 \wedge a_2) \wedge \ddp a_3\right] \nonumber
	\end{align}
where $|a_1|, |a_2|, |a_3| <n+p$.  In fact, each term above is zero.  For instance, consider the first term in \eqref{cy22}.  To take account of the $L^{-(p+1)}$ operator, we can express
	\begin{align*}
	\alpha_j \w \alpha_k  =  \om^{n+2p+1-i} \w \eta_{i-2p-1} 
	\end{align*}
where  $\eta_{i-2p-1}\in \Om^{i-2p-1}$ and $\alpha_l \in \FO{l}$. In above, we have used the fact that $i+j+k=2n+1$ and $i<n+p$.  Then, we have 
	\begin{align*}
	\ddp \alpha_i \w L^{-(p+1)}(\alpha_j \w \alpha_k) =  \om^{n+p-i} \w \ddp \alpha_i \w \eta_{i-2p-1} =0
	\end{align*}
by the $p$-filter condition $L^{n+p-i}( \FO{i+1}) =0$.  Similar computations for the other terms show that all the terms in \eqref{cy22} vanish identically.

{\bf Cyclic symmetry for $l=3$.}  This follows directly from the definition of $m^3$ and applying \eqref{lambda-integ}.  Since $m^l = 0$ for $l \geq 4$, this finishes the proof.
\end{proof}

\begin{remark}
To any $A_\infty$-algebra with a Calabi-Yau structure, one can associate a two-dimensional topological conformal field theory \cite{costello}.   The action of this field theory consists of a standard kinetic term and a potential term $\Phi$.  The potential term is determined by the $A_\infty$ structure and takes the form
\begin{align*}
\Phi(x) = \sum^\infty_{l=1} \dfrac{1}{l+1} \langle m^l(x, x, \ldots, x), x \rangle \,,
\end{align*}
where $x$ is a formal sum of elements of the $A_\infty$-algebra.  For instance, for the de Rham cdga in dimension $D=3$, with $x= \alpha_1$, $\Phi(x=\alpha_1)$ is the Chern-Simon functional   
$$\Phi(x) = \dfrac{1}{2}  \langle d\alpha_1,  \alpha_1\rangle +  \dfrac{1}{3}\langle \alpha_1 \w \alpha_1, \alpha_1 \rangle\,. $$
It is also straightforward to write down the potential for $\cF$.  For example, in dimension four and with $x = \beta_1 + \beta_2$, where $\beta_i\in F^0\Omega^i(M)$, one finds
$$\Phi(x) = \langle m^2(\beta_2, \beta_2), \beta_1 \rangle + \dfrac{1}{2} \langle m^3(\beta_1, \beta_1, \beta_2), \beta_2 \rangle\,.$$ 
\end{remark}

\section{Homology and intersection when \texorpdfstring{$\omega$}{omega} is integral}
\label{section.intersection}

In this section, we will assume $\omega \in H^2(M, \mathbb{Z})$ so that $E_p$ is a smooth manifold.  On $E_p$, there is the standard homology and intersection theory on it.  The equivalence of  $\Om(E_p) \simeq \cF_p(M)$ suggests that there should also be a dual homology and intersection theory on $(M, \om)$, and one that is distinctively symplectic in nature.  Here, we will provide a description of what such a theory should look like on $M$.  For simplicity, we will focus on the $p=0$ case, and in this case, the objects involved turn out to be coisotropic and isotropic spaces on $M$.  Due to its close association with forms, we will use the language of currents to describe these spaces.  Admittedly, our description here is heuristic in nature and will require further investigations which we plan to pursue in the future.   

Since $p$ will be taken to be zero for the most part, we will drop the subscript label in $E_p$ and $\cpF$ for the rest of this section.

\subsection{Recollections on currents}
Recall that the collection $\cD_{m}$ of $m$-dimensional currents is defined to be the weak $\RR$-linear dual to the space of compactly supported smooth $m$-forms on a manifold of dimension $d$. Given a current $\rho$, its boundary $\del\rho$ is defined by dualizing the de Rham differential:
	\eqnn
		\del \rho(\eta) := \rho (d\eta).
	\eqnnd
A form of the de Rham theorem states that the resulting chain complex of currents
	\eqnn
		0 \to \cD_{d} \xra{\del} \cD_{d-1} \xra{\del} \ldots \xra{\del} \cD_0 \to 0 \ldots 
	\eqnnd
has homology isomorphic to singular homology, provided that the manifold $M$ is connected, compact, and oriented:
	\eqnn
		H_*(\cD) \cong H_*(M;\RR).
	\eqnnd
\begin{remark}
Since $\cD_m$ is a very large vector space, one can often restrict to smaller subspaces while still retaining the same homology. For example, one can show that currents that are represented by Lipschitz neighborhood retracts, and whose $\del$ are also represented by such, have the same cohomology. (This follows by observing that both theories satisfy the Eilenberg-Steenrod axioms; see for instance Federer~\cite{federer}.)
\end{remark} 

\begin{example} 
Note that oriented, compact $m$-manifolds $X \subset M$ (possibly with boundary) define distributions by sending $\eta \mapsto \int_X \eta$. We also note that, when $M$ is compact and oriented of dimension $d$, any $(d-m)$-form $\rho$ defines a linear map by $\eta \mapsto \int_M \eta \wedge \rho$. 

Indeed, given any $X$, one can define its dual current to be represented by a distributional form $\rho_X$ such that
	\eqnn
		\int_X \eta = \int_M \eta \wedge \rho_X\,.
	\eqnnd
Here, $\rho_X$ is a differential form with coefficients in distributions; the distribution coefficient is the Dirac delta function supported on $X$. 
\end{example}

\subsection{Homology of primitive currents}
Coisotropic and isotropic subspaces are related to primitive currents  by the following:
\begin{lemma}[Section 4 of \cite{tseng-yau}] \label{dualc}
Let $N\subset M$ be an embedded, compact, oriented submanifold with dual current $\rho_N$.  Then
\begin{itemize}
\item[(i)] $\rho_N$ is primitive if and only if $N$ is coisotropic;
\item[(ii)] $\rstar \rho_N$ is primitive if and only if $N$ is isotropic. 
\end{itemize}
\end{lemma}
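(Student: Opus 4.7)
The plan is to translate primitivity (membership in $F^0\Omega$) into an explicit algebraic condition via Lemma~\ref{lemma.filtered}, pair against arbitrary smooth test forms, and recognize the resulting pointwise vanishing of $\omega|_N$ (or of a power of it) as the coisotropy, respectively isotropy, condition. Throughout, I let $m = \dim N$, so $\rho_N$ has form-degree $2n-m$ and $\rstar \rho_N$ has form-degree $m$. Since each of the four equivalent conditions in Lemma~\ref{lemma.filtered} is algebraic (performed degree-by-degree in the Lefschetz decomposition, and using only wedge with a smooth form together with $\rstar$), it extends without modification to currents.

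For part (i), I will apply the equivalence ``$\alpha_k$ primitive $\Leftrightarrow L^{n+1-k}\alpha_k=0$'' to $\rho_N$, so that primitivity reads $\omega^{m-n+1}\wedge \rho_N = 0$. Testing against an arbitrary $\eta \in \Omega^{2n-m-2}(M)$ and using $\rho_N(\zeta) = \int_N \zeta|_N$ turns this into
\[
\int_N (\omega|_N)^{m-n+1} \wedge \eta|_N = 0 \quad \text{for all such } \eta,
\]
which by a standard bump-function density argument is equivalent to the pointwise vanishing $(\omega|_N)^{m-n+1} = 0$ on $N$. (For $m<n$ this would force $\rho_N=0$, impossible for a nonempty submanifold, so primitivity of $\rho_N$ automatically requires $m\geq n$.) A short linear-algebra step then identifies the vanishing of $(\omega|_N)^{m-n+1}$ with $\mathrm{rank}\,(\omega|_N)_p \leq 2(m-n)$, equivalently $\dim \ker (\omega|_N)_p \geq 2n-m$. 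Since $\ker(\omega|_N)_p = (T_p N)^{\omega}\cap T_p N$ sits inside $(T_p N)^{\omega}$ of dimension exactly $2n-m$, the inequality forces equality, giving $(T_p N)^{\omega}\subset T_p N$---the defining condition of coisotropy.

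For part (ii), the cleanest route is condition~(iv) of Lemma~\ref{lemma.filtered}, applied to $\alpha = \rstar \rho_N$ with $k=m$ and $p=0$. Using $\rstar^2=\id$, primitivity of $\rstar \rho_N$ becomes $L\rho_N = \omega\wedge \rho_N = 0$. Pairing with an arbitrary $\eta \in \Omega^{m-2}(M)$ yields $\int_N \omega|_N \wedge \eta|_N = 0$, and by the same density argument this forces $\omega|_N = 0$, i.e., $N$ is isotropic. (Here isotropy automatically forces $m\leq n$.)

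I expect the main obstacle to be modest: the entire argument hinges on (a) verifying that ``test against all smooth $\eta$'' genuinely detects pointwise vanishing of the relevant form on $N$---a standard fact on oriented manifolds requiring only bump functions in local coordinates---and (b) the linear-algebra step in part (i) relating the vanishing of a power of the $2$-form $\omega|_N$ to the dimension of its radical. The extension of the operators $L$ and $\rstar$ to currents is automatic, since both are $\RR$-linear and are built directly from the (algebraic) Lefschetz decomposition.
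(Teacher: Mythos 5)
The paper itself offers no proof of this lemma---it is imported verbatim from Section~4 of \cite{tseng-yau}---so there is no internal argument to compare against; judged on its own terms, your proof is correct and complete. The three load-bearing points all hold: the equivalent characterizations in Lemma~\ref{lemma.filtered} are fiberwise linear algebra on the exterior bundle and therefore apply verbatim to currents; testing $\omega^{m-n+1}\wedge\rho_N$ (resp.\ $\omega\wedge\rho_N$) against ambient forms genuinely detects pointwise vanishing of $(\omega|_N)^{m-n+1}$ (resp.\ $\omega|_N$), since restriction $\Omega^\bullet(M)\to\Omega^\bullet(N)$ is surjective for an embedded compact $N$ and the wedge pairing on an oriented manifold is nondegenerate; and the rank count converting $(\omega|_N)^{m-n+1}=0$ into $\dim\bigl(T_pN\cap (T_pN)^{\omega}\bigr)\geq 2n-m$, hence $(T_pN)^{\omega}\subset T_pN$, is exactly right (with the dimension constraints $m\geq n$ and $m\leq n$ falling out automatically, as you note). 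Your route does differ from the usual one: the argument in \cite{tseng-yau} works locally with $\rho_N$ written as a delta-function times a covolume form, so that $\Lambda\rho_N$ is computed by contracting the Poisson bivector into conormal directions of $N$, and primitivity becomes the statement that $\omega$ vanishes on the image of the conormal bundle under $\omega^{-1}$, i.e.\ on $(TN)^{\omega}$. Your version replaces that coordinate computation with a global test-form/duality argument plus the elementary fact that a skew form of rank $2r$ satisfies $\omega^{r+1}=0$; the trade-off is slightly more bookkeeping with degrees in exchange for never having to write $\rho_N$ in local coordinates, and it extends with no extra work to the $p$-filtered generalization (replacing $m-n+1$ by $m-n+p+1$).
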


To see how coisotropic and isotropic subspaces both arise in the $p=0$ case, we take the cochain complex of \eqref{eqn.A}  
\begin{align*}
	{\cF} 
	 = \left(P^0  \xra{\dpp} P^1  \xra{\dpp} \ldots
	\xra{\ddp} 	P^{n} \xra{-\dpp \dpm}
	 {\bar P}^{n}\xra{-\dpm} {\bar P}^{n-1}
	\xra{-\dpm}\ldots\xra{-\dpm}{\bar P}^0\right)
\end{align*}
and make a modification by replacing the ``bar" elements ${\bar F^0\Omega^k}= {\bar P^k}$ by $\rstar P^k = \om^{n-k}P^k$.  This results in the following complex:
\begin{align}\label{cochain0}
	{\tilde \cF} 
	=\left(P^0  \xra{\dpp} P^1  \xra{\dpp} \ldots
	\xra{\dpp} 	P^{n} \xra{-\dpp \dpm}
	 \om^0{P}^{n}\xra{-d} \om^1{P}^{n-1}
	\xra{-d}\ldots\xra{-d}\om^n{\bar P}^0\right).
\end{align}
Note that $\rstar\, \dpm \, \rstar = d\,$, which is just the standard exterior derivative when acting on $\om^{n-k} P^k$ forms.  

Following the intuition of~Lemma \ref{dualc}, the first half of ${\tilde \cF}$ can be roughly thought of as dual to a complex of coisotropic chains $C_\bullet$; the second half is dual to a complex of isotropic chains $I_\bullet$ since $\rstar (\om^{n-k} P^k) = P^k$.  In all, the cochain complex \eqref{cochain0} suggests that we seek a putative chain complex of coisotropic and isotropic currents:
\begin{align}\label{chain0}
	C_{2n}  \xra{\paa} C_{2n-1}  \xra{\paa} \ldots
	\xra{\paa} 	C_{n} \xra{\pab}
	 I_n\xra{\pa} I_{n-1}
	\xra{\pa}\ldots\xra{\pa}I_0.
\end{align}
Here, $C_k$ is the vector space generated by currents supported by coisotropic chains of dimension $k$, while $I_k$ is generated by currents supported on isotropic chains of dimension $k$, with $C_n=I_n=\cL$ being the space of Lagrangian currents. The boundary maps $\{\pa, \pab, \paa\}$ then have the following interpretations:
\begin{enumerate}
\item $\pa$ is dual to the exterior derivative, $d$, and therefore $\pa$ is just the standard boundary map.  Note that the boundary of an isotropic subspace is also isotropic.\\
\item $\pab$ is dual to $\dpp\dpm= d \circ \dpm$.  This is a ``boundary" operator mapping Lagrangians to Lagrangians.  One way to understand $\pab$ is to study first the dual of the operator $\dpm$.   Strikingly, the differential operator $\dpm$ {\it lowers} the degree of a current by one and so its dual would need to {\it increase} the dimension of a chain by one.  Such a novel operation can be motivated by noting that for a degree $n$ primitive current, $\beta_n$,  $d \beta_n = \om \w (\dpm \beta_n)\,$, which dualizes to  
$$\pa L = H \cap L_+\,,$$   
with $L$ being the Lagrangian chain dual to $\beta_n$, $H$ the $(2n-2)$-dimensional symplectic submanifold dual to $\om\,$, and $L_+$ being dual to $\dpm \beta_n\,$.\footnote{The existence of a symplectic submanifold dual to $[\om]$ an integral class is currently known only for $[k\om]$ where $k$ is a sufficiently large integer \cite{donaldson}.}
In words, $L_+$ is an $(n+1)$-dimensional chain that intersects with $H$ along the boundary of $L$.   Altogether, $\pab$ should correspond to the composition of $L \to L_+ \to \pa L_+$, i.e. it maps $L$ to the boundary of $L_+$.\\
\item $\paa$ is dual to $\dpp = \Pi^{0} \circ d$ where $\Pi^0:\Omega^k \to P^k$ is the projection operator onto the primitive component (see \eqref{projp}).  This suggests that $\paa = \pi^0 \circ \pa$ where $\pi^0$ heuristically ``projects" onto the coisotropic component of the boundary.  For a better intuition, we note that for $\beta_k$, a primitive $k$-current, $d\beta_k = \dpp \beta_k + \om \w (\dpm \beta_k)\,$.  This dualizes to 
$$\paa C = \pa C  -  H \cap C_+$$ 
where $C$ is the $(2n-k)$-dimensional coisotropic chain dual to $\beta_k$, and $C_+$ is the $(2n-k+1)$-dimensional coisotropic chain dual to $\dpm \beta_k$.  Hence, the projection $\pi^0$ effectively removes point $x\in \pa C$ whose tangent space $T_x(\pa C)\subset T_xH$.      
\end{enumerate}

The chain complex \eqref{chain0} implies several distinct homologies for isotropic and coisotropic chains on $(M^{2n}, \om)$.  For isotropic chains, we have 
\begin{align*}
H_k^{I}(M) := \dfrac{\ker(\pa\!: I_k \to I_{k-1})}{\im (\pa\!: I_{k+1} \to I_{k})}\, , \qquad 0\leq k \leq n-1\,,
\end{align*}
associated to the right hand side of \eqref{chain0}.  This is just standard homology but with objects restricted to isotropic chains.  For coisotropics chains on left hand side of \eqref{chain0}, we have the homology:
\begin{align*}
H_{2n-k}^{C}(M) := \dfrac{\ker(\paa\!: C_{2n-k} \to C_{2n-k-1})}{\im (\paa\!: C_{2n-k+1} \to C_{2n-k})}\, , \qquad 0\leq k \leq n-1\,.
\end{align*}
Of note, coisotropic elements in the homology can have boundary, but the boundary may not have any coisotropic component.  Lastly, at the center of chain complex \eqref{chain0}, we have two homologies involving Lagrangians:
\begin{align*}
H_n^{I}(M) &:= \dfrac{\ker(\pa: \cL_n \to I_{n-1})}{\im (\pab: \cL_{n} \to \cL_{n})}\, , \\
H_n^{C}(M) &=\dfrac{\ker(\pab: \cL_n \to \cL_{n})}{\im (\paa: C_{n+1} \to \cL_{n})}\, .
\end{align*}
These two Lagrangian homologies certainly can have elements that are not in the standard homology $H_n(M)$.  For $H_n^{I}(M)$, though the Lagrangian  generators must be boundaryless, a Lagrangian that is the boundary of an $(n+1)$-dimensional chain can be a non-trivial element.  Such a Lagrangian would only be trivial in $H_n^{I}(M)$ if it is also in the image of the $\pab$ map.  On the other hand, for $H_n^{C}(M)$, its elements may include Lagrangians which have non-trivial boundary but are trivial under the $\pab$ map.  Examples of both cases will be seen in the Kodaira-Thurston four-manifold discussed at the end of this section.

\subsection{Intersection theory via lifts to \texorpdfstring{$E$}{E}}
To better understand these homologies, we can make use of the isomorphism between the cohomology of primitive currents, $F^0H(M):=PH(M)$, and the singular cohomology of the contact circle bundle,  $H(E_0)$.  Then we can explicitly pull back the currents in $\tilde \cF(M)$ to currents in $\Om(E)$ by reinterpreting the map $g: \cF(M) \to \Om(E)$ from Definition~\ref{defn.fg} as a map $\tilde g: \tilde \cF(M) \to \Om(E)$. See Figure~\ref{figure.mapgz}.

\begin{figure}[t]
\begin{align*}
\xymatrix
{
 \ldots \ar[r]^-d & \cC^k \ar[r]^-d& \ldots\ar[r]^-d& \cC^{n}  \ar[r]^-d& \cC^{n+1} \ar[r]^-d& \ldots \ar[r]^-{d} & \cC^{2n+2p+1-k} \ar[r]^-{d} & \ldots \\
 \ldots \ar[r]^-{\dpp} & P^{k} \ar[u]^-{\beta_k-\theta \dpm\beta_{k}}\ar[r]^-{\dpp}& \ldots \ar[r]^-{\dpp} & P^{n} \ar[u]^-{\beta_{n} - \theta \dpm\beta_{n}}\ar[r]^-{-\dpp\dpm} & \om^{0}P^{n} \ar[u]^-{-\theta \beta_{n}}\ar[r]^-{-d} &\ldots  \ar[r]^-{-d} & {\om^{n-k}P}^{k} \ar[u]^-{-\theta\, \om^{n-k} \beta_k} \ar[r]^-{-d}& \ldots
}
\end{align*}
\caption{The map ${\tilde g}: {\tilde \cF}(M) \to \Om(E)$.}
\label{figure.mapgz}
\end{figure}

The interpretation of the map $\tilde g$ at the chain level is as follows:
\begin{itemize}
\item[$\bullet$] A coisotropic chain $C_{2n-k}$ of codimension $k$ in $M$ (represented by the dual current $\beta_k\in P^k(M)$), is mapped to a codimension $k$ chain $\tilde C_{2n+1-k}$ of dimension $(2n+1-k)$.  The extra dimension comes from $\tilde C$ wrapping around the circle fiber of $E$ plus another the addition of another component that is the dual of $\theta\dpm \beta_k$.  This additional component is important as those coisotropic chains that have non-trivial boundary, but yet the boundary does not have a coisotropic component, become boundaryless cycles after lifted to $E$ (since $\beta_k - \theta \dpm \beta_k$ is $d$-closed).
\item[$\bullet$] An isotropic chain $I_k$ of dimension $k$ in $M$ (represented by the dual current $\om^{n-k} \beta_k \in \om^{n-k} P^k$) gets mapped to a section $\tilde I_k$ of $E$ which is still of dimension $k$.  This is possible since $\omega|_I = 0\,$ and thus the restricted bundle $E|_I$ is trivial.  In particular, if $k=n$, then $I$ is a Lagrangian and the resulting $\tilde I$ in  $E$ is Legendrian.  Furthermore, if $d (\om^{n-k} \beta_k)=0$, this implies that $I$ has no boundary and neither does $\tilde I$ after the lift.
\end{itemize}
Under the map $\tilde g$, the homologies $\{H^C_{\bullet}(M), H^I_{\bullet}(M)\}$ map to the standard homology $\{H_{\bullet}(E)\}$.  Thus the Calabi-Yau pairing on $\cF$ is interpreted as the usual intersection pairing of $\tilde C_{2n+1-k}$ with $\tilde I_k$ inside $E$. 

\begin{subsection}{Example: Kodaira-Thurston four-fold.}\label{section.KT}
The Kodaira-Thurston manifold, $KT^4$, is a closed, non-K\"ahler, symplectic four-manifold.  It can be defined as the quotient of $\RR^4$ with coordinates $\{\xa, \xb, \xc, \xd\}$ under the identification
\begin{align}\label{qid}
(\xa, \xb, \xc, \xd) \sim (\xa + a, \xb + b , \xc + c, \xd + d - b\,\xc) 
\end{align}
where $a,b,c,d \in \ZZ\,$.  On $KT^4$, global one-forms can be written as 
\begin{align*}
\ea = d\xa\, , \quad \eb = d\xb\, , \quad \ec = d\xc\, , \quad \ed= d\xd + \xb d\xc\,.  
\end{align*}
We will take the symplectic structure on $KT^4$ to be
\begin{align}\label{ktsymp}
\om = \ea \w \eb + \ec \w \ed.
\end{align}
It is worthwhile to point out that $KT^4$ can be interpreted as a torus bundle over a torus in two ways:
\begin{align*}
\xymatrix{
T^2_{\{\xc, \xd\}} \ar[r] & KT^4 \ar[d] \\
 & T^2_{\{\xa, \xb\}} 
}\qquad\qquad\qquad
\xymatrix{
T^2_{\{\xa, \xd\}} \ar[r] & KT^4 \ar[d] \\
 & T^2_{\{\xb, \xc\}} 
}
\end{align*}
On the left, the fiber torus over a point on the base is a symplectic submanifold with respect to the $\om$ of \eqref{ktsymp}.  In contrast, the fiber torus on the right is a Lagrangian.  In fact, it is actually a special Lagrangian with respect to the global (2,0)-form $\Omega^{2,0} = (\ea + i\, \eb) (\ec + i\,\ed)$.

\begin{figure}[t]
\begin{tabular}{c|c|c|c|c|c|c}
$j$& $0$ & $1$ & $2$ & $3$ & $4$ & $5$ \\
\hline
& & $e_1$,   & $\om$ & $\om \, e_1,$ &  & \\
$H^j(KT^4)$ & $1$ & $e_2$, &$e_{12}-e_{34}$,  &$\om\, e_2,$  & $\om^2$&\\
& & $e_3$ & $e_{13}, e_{24}$ & $\om\, e_4 $ & & \\
\hline
&  & $e_1$,   & $e_{12}-e_{34},$ &$e_{12}-e_{34},$&$e_1$,  & \\
$F^0\!H^j(KT^4)$&$1$&$e_2$,&$e_{13}, e_{24}$, &$e_{13}, e_{24}$, &$e_2$,&$1$\\
&&$e_3$&$e_{14}$&$e_{23}$&$e_4$&\\
\hline
 &  &  $e_1$,   & $e_{12}-e_{34},$&$\theta(e_{12}-e_{34}),$&$\theta(\om\, e_{1})$,  & \\
$H^j(E)$&$1$&$e_2$,&$ e_{13}, e_{24},$&$\theta e_{13}, \theta e_{24}$&$\theta( \om\, e_{2})$&$\theta \om^2$\\
&&$e_3$&$e_{14}+\theta e_3$&$\theta e_{23}$&$\theta(\om\, e_{4})$&
\end{tabular}
\caption{Basis of generators for the cohomologies of the Kodaira-Thurston four-manifold $KT^4$ and its circle bundle $E$ (where $d\theta = \om$).  Here we use the notation $e_{ij} = e_ie_j$ and $e_{ijk}=e_i e_j e_k\,$.}
\label{Table.KT}
\end{figure}

To define $E$, the circle bundle over $KT^4$, we define the global angular one-form
\begin{align}\label{kttheta}
\theta = dy + \xa d\xb - \xd d\xc \,.
\end{align}
where $y$ is the coordinate on $S^1 = \RR/\ZZ$ with $y \sim y+1$.  Clearly, $d\theta = \omega$ on $E$.  Moreover, that $\theta$ is globally-defined implies that the circle fiber is twisted over $KT^4$ with the identification
\begin{align}\label{qeid} 
(\xa, \xd, y) \sim (\xa + a, \xd + d, y - a \,  \xb + d \, \xc ) 
\end{align}
where $a, d \in \ZZ$. 

In Table \ref{Table.KT}, we give a basis of generators for the de Rham cohomology and the $p=0$ filtered cohomology of $KT^4$, and also the de Rham cohomology of its circle bundle.  They have a heuristic dual chain description when promoted to currents.  The two special generators in $F^0H^*(M)$ that are distinct from those in $H^*(M)$ are $e_{14} \in F^0H^2(M)$ and $e_{23} \in F^0H^3(M)$.   They are both dual to Lagrangians.
\enum
\item $e_{14}\in F^0H^2(KT^4)$ corresponds to the dual current of a Lagrangian $L$ spanning the torus base in the $\{x_2, x_3\}$ directions.  Note that $d\, e_{14} = - e_{123}$, hence, the Lagrangian $L$ has a non-trivial boundary, i.e. $\partial L \neq 0\,$.  However, under the lift to $e_{14} + \theta e_3\in H^2(E)$, the corresponding three-dimensional cycle $\tilde L \subset E$ has no boundary.  
\item $e_{23}\in F^0H^3(KT^4)$ is the dual current of a Lagrangian $L'$ wrapping the torus fiber spanning $\{x_1, x_4\}$ over a point on the base.  Since $e_{23} = de_4$, $L'$ is rationally the boundary of a three-dimensional subspace. 
\item Note that the intersection of $L$ and $L'$ can be defined on $M$. The value corresponds to the standard intersection (even though $L$ is not a cycle).        
\enumd
\end{subsection}

\section{Functoriality of \texorpdfstring{$\cF$}{F}}
\label{section.functoriality}
As mentioned in the introduction, $\cF$ (or, equivalently, $\cC$) as an $A_\infty$-algebra depends only on the cohomology class $[\omega] \in H^2$, as opposed to $\omega$ itself.\footnote{See Corollary~\ref{cor.cohomology-class}, and also Remark~\ref{remark.C-infty-linear}. Of course, much math and physics is still done over the moduli of {\em cohomology classes} of symplectic or K\"ahler structures, so there is still interesting geometry to be explored.} However, invariants of manifolds are not useful merely for what they assign to manifolds, but for how they behave under morphisms (and families of morphisms). In this section we study the functoriality of the assignment $M \mapsto \cF(M)$, culminating in Proposition~\ref{prop.functor}. Note, however, that we fall short of obtaining how $\cF$ or $\cC$ behave in {\em families} of morphisms---this is because it seems that one needs a more sensible notion of homotopies between algebra maps in the $C^\infty$ category.

\begin{remark}\label{remark.C-infty-linear}
In this section, we treat $\cF(M)$ as an $A_\infty$-algebra over $\RR$, not as a smooth object living over $M$. The reader may observe that this is a loss of information---after all, both $\cC$ and $\cF$ are clearly local objects over $M$. There are at least three languages to convey what we mean: One could think of the equivalence $\cC \simeq \cF$ as an equivalence of (i) {\em algebroids} over $M$, rather than just algebras over $\RR$, or of (ii) dg bundles over $M$ with $A_\infty$ structures on their sections, or (iii) a sheaf of $C^\infty$-algebras over $M$ in the sense, for example, of Spivak~\cite{spivak}. We agree with the reader. However, at present, we have not yet worked out the full functoriality of these objects as $C^\infty$ objects living over $M$---as a simple example, when we explore how $\cF$ interacts with Lagrangian submanifolds, we will see that for non-transverse Lagrangian intersections, one naturally desires a derived intersection formula for algebroids. This technology has not yet been fully developed as far as we know, so we do not delve into it in this paper.
\end{remark}

\subsection{On objects}

Here is a corollary of Theorem~\ref{thm.cone}, which further illustrates the topological invariance of $\cF$ and $\cC$:

\begin{corollary}\label{cor.cohomology-class}
Let $\omega$ and $\omega'$ be two symplectic forms on $M$, and let $\cF_p(\omega)$ and $\cF_p(\omega')$ denote the $A_\infty$-algebras associated to each. If $[\omega] = [\omega'] \in H^2(M;\RR)$, then there is an equivalence of $A_\infty$-algebras $\cF_p(\omega) \simeq \cF_p(\omega')$ for every $p$.
\end{corollary}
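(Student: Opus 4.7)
The plan is to reduce the statement to a comparison of cone cdgas via Theorem~\ref{thm.cone}, and then to build an explicit isomorphism of cdgas between the two cones using a primitive for $\omega^{p+1} - \omega'^{p+1}$.

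First, by Theorem~\ref{thm.cone} applied to each symplectic structure, one has zigzags of $A_\infty$-algebra equivalences
\eqnn
\cF_p(\omega) \simeq \cone(\omega^{p+1}), \qquad \cF_p(\omega') \simeq \cone(\omega'^{p+1}).
\eqnnd
So it suffices to produce a quasi-isomorphism (in fact an isomorphism) of cdgas $\cone(\omega^{p+1}) \simeq \cone(\omega'^{p+1})$, where by Definition~\ref{defn.C} each cone is obtained from $\Omega(M)$ by freely adjoining a degree $2p+1$ variable $\theta$ (resp.\ $\theta'$) with $d\theta = \omega^{p+1}$ (resp.\ $d\theta' = \omega'^{p+1}$).

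The key step is to exhibit a primitive. Since $[\omega] = [\omega']$, choose $\alpha \in \Omega^1(M)$ with $\omega - \omega' = d\alpha$. Using that both $\omega$ and $\omega'$ are closed and the factorization
\eqnn
\omega^{p+1} - \omega'^{p+1} = (\omega - \omega') \wedge \sum_{i=0}^{p} \omega^i \wedge \omega'^{p-i},
\eqnnd
one sees $\omega^{p+1} - \omega'^{p+1} = d\eta$ where
\eqnn
\eta := \alpha \wedge \sum_{i=0}^{p} \omega^i \wedge \omega'^{p-i} \in \Omega^{2p+1}(M).
\eqnnd
Now define the map $\Phi : \cone(\omega^{p+1}) \to \cone(\omega'^{p+1})$ to be the identity on the $\Omega(M)$-summand and to send
\eqnn
\theta \longmapsto \theta' + \eta.
\eqnnd
By the universal property of freely adjoining $\theta$, this extends uniquely to a graded-commutative algebra map, and it is a chain map because $d(\theta' + \eta) = \omega'^{p+1} + (\omega^{p+1} - \omega'^{p+1}) = \omega^{p+1} = d\theta$. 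The inverse is defined symmetrically by $\theta' \mapsto \theta - \eta$, so $\Phi$ is an isomorphism of cdgas.

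The only subtlety I anticipate is that the equivalence $\cF_p(\omega) \simeq \cone(\omega^{p+1})$ of Theorem~\ref{thm.cone} depends on $\omega$ through the operations of $\cF_p$, so to get an $A_\infty$-equivalence of the $\cF_p$'s one composes $g$ for $\omega$, $\Phi$, and a quasi-inverse of $g$ for $\omega'$. This composition exists because $g$ is a quasi-isomorphism (Lemma~\ref{lemma.retractp}); one either invokes homotopy transfer to build an $A_\infty$-quasi-inverse to $g$, or one simply composes in the localized homotopy category of $A_\infty$-algebras. Note that the choices of $\alpha$ and hence of $\eta$ produce different isomorphisms $\Phi$, but all are homotopic as cdga maps; making this homotopy canonical (and in particular understanding how $\cF_p$ varies in families of cohomologous $\omega$'s) is exactly the $C^\infty$-linear functoriality issue flagged in Remark~\ref{remark.C-infty-linear}, and is not needed for the bare existence statement of the corollary.
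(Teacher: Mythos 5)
Your proposal is correct and follows essentially the same route as the paper: reduce to the cone cdgas via Theorem~\ref{thm.cone}, pick a primitive $\eta$ for the difference of the two Euler forms, and define the isomorphism by $\theta \mapsto \theta' + \eta$ (the paper writes the same map as $\alpha \oplus \theta\beta \mapsto (\alpha - \eta\beta) \oplus \theta'\beta$, with the opposite sign convention for $\eta$). Your explicit telescoping formula for $\eta$ and the remark about composing with a quasi-inverse of $g$ are harmless elaborations of steps the paper leaves implicit.
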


\begin{proof}
It suffices to show an equivalence $\cone(\omega^{p+1}) \simeq \cone(\omega'^{p+1})$. More generally, let $\zeta$ and $\zeta'$ be two even-degree elements of a cdga that define the same cohomology class. We write an element of $\cone(\zeta)$, and its differential, as
	\eqnn
		\alpha + \theta \beta, \qquad d(\alpha + \theta \beta) = d\alpha + \zeta \beta \oplus -\theta \beta
	\eqnnd
and likewise for $\cone(\zeta')$:
	\eqnn
		\alpha + \theta' \beta, \qquad d(\alpha + \theta' \beta) = d\alpha + \zeta'\beta \oplus -\theta' \beta.
	\eqnnd
Let $\eta$ be an element of the cdga such that $d\eta = \zeta' - \zeta$. Then consider the map
	\eqnn
		\phi: \alpha \oplus \theta \beta \mapsto (\alpha -\eta \beta) \oplus \theta' \beta.
	\eqnnd
This is an equivalence of cdgas---one can easily check it is both a chain map and an algebra map, and an inverse chain map is given by
	\eqnn
		\alpha \oplus \theta' \beta \mapsto (\alpha + \eta \beta) \oplus \theta \beta.
	\eqnnd
\end{proof}

\begin{remark}
In particular, this shows that those invariants from \cite{tseng-yau}, \cite{tseng-yau-2}, \cite{tsai-tseng-yau} expressed entirely in terms of the $A_\infty$-equivalence class of $\cF_p$ (for example, the filtered cohomologies) are invariants only of the cohomology class of $\omega$. We offer one philosophical consistency check: Many invariants of symplectic geometry rely on a {\em positivity} condition arising from how $\omega$ evaluates on surfaces mapping to $M$. Since $\cF_0$ by definition only sees those forms which have no $\omega$ factor, these complexes are blind to measures of positivity.
\end{remark}

The proof of the following is straightforward:

\begin{prop}\label{prop.natural}
For all $p \geq 0$, the natural maps
	\eqnn
		\cone(\omega^{p+1}) \xra{q} \cone(\omega^p),
		\qquad
		q(\alpha \oplus \theta \beta) = \alpha \oplus \theta \beta \omega.
	\eqnnd
are maps of cdgas.
\end{prop}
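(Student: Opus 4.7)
The plan is to unwind both sides of the definition and use two elementary facts about $\omega$: it is closed ($d\omega = 0$) and it has even degree (so it commutes with everything in the graded-commutative sense). There is no real obstacle here; the proof is entirely formal once the relevant degree conventions are made explicit. I would split it into (i) checking well-definedness and degree, (ii) verifying that $q$ is a chain map, and (iii) verifying that $q$ respects the product.

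For (i), note that if $\theta$ in the domain has degree $2p+1$ and $\theta'$ in the codomain has degree $2p-1$, then an element $\alpha \oplus \theta\beta \in \cone(\omega^{p+1})^k$ has $\alpha \in \Omega^k(M)$ and $\beta \in \Omega^{k-2p-1}(M)$, so $\beta \wedge \omega \in \Omega^{k-2p+1}(M)$, which is exactly the right degree for $\theta'(\beta\omega)$ to live in $\cone(\omega^p)^k$. (To keep the notation simple I will continue to write $\theta$ in both target and source, as the excerpt does.)

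For (ii), I would simply compute both sides of $d_{\cone(\omega^p)}\circ q = q \circ d_{\cone(\omega^{p+1})}$ on a general element $\alpha\oplus\theta\beta$:
\begin{align*}
q\, d_{\cone(\omega^{p+1})}(\alpha\oplus\theta\beta) &= q\bigl((d\alpha + \omega^{p+1}\beta) \oplus -\theta\, d\beta\bigr) \\
&= (d\alpha + \omega^{p+1}\beta) \oplus -\theta\,(d\beta)\wedge\omega, \\
d_{\cone(\omega^p)}\, q(\alpha\oplus\theta\beta) &= d_{\cone(\omega^p)}(\alpha\oplus\theta(\beta\wedge\omega)) \\
&= \bigl(d\alpha + \omega^p(\beta\wedge\omega)\bigr) \oplus -\theta\, d(\beta\wedge\omega).
\end{align*}
These agree because $\omega^p\wedge\omega = \omega^{p+1}$ and, since $d\omega=0$, $d(\beta\wedge\omega) = d\beta\wedge\omega$.

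For (iii), using that $\theta^2 = 0$ (odd degree) and that $\omega$ has even degree and hence commutes with all forms in the graded-commutative sense, I would compute both $q((\alpha_1+\theta\beta_1)(\alpha_2+\theta\beta_2))$ and $q(\alpha_1+\theta\beta_1)\cdot q(\alpha_2+\theta\beta_2)$. Both reduce to
\[
\alpha_1\alpha_2 \oplus \theta\bigl(\beta_1\alpha_2 + (-1)^{|\alpha_1|}\alpha_1\beta_2\bigr)\wedge \omega,
\]
using graded-commutativity of $\omega$ to slide the $\omega$ through $\alpha_2$ when it appears in the second factor. This completes the verification; the only conceptual input anywhere is the closedness and even parity of $\omega$.
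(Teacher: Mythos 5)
Your verification is correct and complete: the paper itself gives no proof, labeling the statement as straightforward, and your three-step check (degree bookkeeping, chain-map identity via $d\omega=0$ and $\omega^p\wedge\omega=\omega^{p+1}$, and multiplicativity via the even parity of $\omega$) is exactly the routine computation being elided. Nothing is missing.
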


As a result, we can fill in the dotted arrow below by composing the solid arrows, which are all maps of $A_\infty$-algebras:
	\eqnn
		\xymatrix{
			\cone(\omega^{p+1}) \ar[r]^q & \cone(\omega^p) \ar[d]^f \\
			\cF_p \ar[u]^g \ar@{-->}[r]^q & \cF_{p-1}.
		}
	\eqnnd

\begin{corollary}
For all $p \geq 0$, there are natural maps of $A_\infty$-algebras
	\eqnn
		\cF_p \to \cF_{p-1}.
	\eqnnd
\end{corollary}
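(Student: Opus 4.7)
The plan is to realize the desired map as a three-fold composition
\[ \cF_p \xrightarrow{g} \cone(\omega^{p+1}) \xrightarrow{q} \cone(\omega^p) \xrightarrow{\tilde f} \cF_{p-1}, \]
in which each arrow is an $A_\infty$-algebra map. The left arrow $g$ is an $A_\infty$-map by Theorem~\ref{thm.Aoo-map}, and the middle arrow $q$ is a cdga map by Proposition~\ref{prop.natural}, hence automatically an $A_\infty$-map with trivial higher components.

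The subtle point is the right arrow. The chain map $f$ of Definition~\ref{defn.fg} is not itself an algebra map (as remarked before Theorem~\ref{thm.Aoo-map}), so it must be upgraded. The cleanest route is to invoke the standard fact that, over a field of characteristic zero, every $A_\infty$-quasi-isomorphism admits a homotopy inverse which is itself an $A_\infty$-algebra map. Since $g: \cF_{p-1} \to \cone(\omega^p)$ is an $A_\infty$-algebra map by Theorem~\ref{thm.Aoo-map} and a quasi-isomorphism by Lemma~\ref{lemma.retractp}, this yields an $A_\infty$-algebra map $\tilde f: \cone(\omega^p) \to \cF_{p-1}$ whose first component is chain-homotopic to $f$. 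Composing $\tilde f \circ q \circ g$ then gives the asserted map, which is well-defined up to $A_\infty$-homotopy; naturality in $M$ (and the fact that it depends only on $[\omega]$, per Corollary~\ref{cor.cohomology-class}) follows from the naturality of each of the three factors.

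Alternatively, one can construct $\tilde f$ explicitly by applying the homotopy transfer theorem to the strong deformation retract data $(f, g, G)$ of Lemma~\ref{lemma.retractp}: the cdga structure on $\cone(\omega^p)$ transfers along this retract to an $A_\infty$-structure on $\cF_{p-1}$ together with an $A_\infty$-extension of $f$ whose higher components are concrete iterated combinations of the wedge product with the homotopy $G$. This approach has the advantage of being explicit, but carries the additional burden of showing that the transferred $A_\infty$-structure is $A_\infty$-equivalent to the one of Section~\ref{section.AooF}. The main obstacle, in either approach, is producing $\tilde f$ with enough control that the composition does not depend on choices; this is automatic in the abstract approach, and in the explicit approach follows from coherence of the transfer formulas.
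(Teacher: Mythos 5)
Your proof follows essentially the same route as the paper: the map is obtained as the composite $\cF_p \xrightarrow{g} \cone(\omega^{p+1}) \xrightarrow{q} \cone(\omega^p) \to \cF_{p-1}$, using Theorem~\ref{thm.Aoo-map} for $g$ and Proposition~\ref{prop.natural} for $q$. You are in fact more careful than the paper about the last arrow---the paper simply labels it $f$ and asserts all solid arrows are $A_\infty$-maps, even though $f$ as defined is only a chain map; your upgrade of $f$ to an $A_\infty$ homotopy inverse of $g$ (valid since we work over $\RR$ and $g$ is a quasi-isomorphism) is the right way to close that gap.
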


Thus to every manifold $M$ we can associate a homotopy-commutative diagram of $A_\infty$-algebras as follows:
	\eqnn
		\xymatrix{
		\ldots \ar[r] \ar[d]_f^{\rotatebox{90}{$\sim$}}
			&\cone(\omega^{p+1}) \ar[r]^q \ar[d]_f^{\rotatebox{90}{$\sim$}}
			& \cone(\omega^p) \ar[d]_f^{\rotatebox{90}{$\sim$}}  \ar[r]
			&\ldots \ar[r] \ar[d]_f^{\rotatebox{90}{$\sim$}}
			& \cone(\omega) \ar[d]_f^{\rotatebox{90}{$\sim$}}\\
		\ldots \ar[r] 
			& \cF_p  \ar[r]^q 
			& \cF_{p-1} \ar[r]
			&\ldots \ar[r]
			& \cF_0
		}
	\eqnnd
One likewise has a homotopy commutative diagram where each downward $f$ is replaced by an upward-pointing $g$.

\begin{remark}
Recall from commutative algebra that for any primitive ideal $(f) \subset R$, one always has a sequence of commutative ring maps
	\eqnn
		R/ (f^{p+1}) \to R/(f^{p})
	\eqnnd
which cut out higher-degree infinitesimal neighborhoods around the locus $f = 0$. The above results, particularly Proposition~\ref{prop.natural}, are a cdga analogue. So the sequence of algebra maps (\ref{eqn.algebra-maps}) may be thought of as a sequence of neighborhoods around the locus where $\omega \in \Omega^2(M)$ vanishes.\footnote{Not as a locus on $M$, but as a locus on {\em stacky points} on the stack represented by $\cone(\omega)$.}
\end{remark}

\subsection{On morphisms} 
Note that if $f: (M,\omega) \to (M',\omega')$ is any smooth map such that $f^*\omega' = \omega$, one has an induced map of cdgas
	\eqn\label{eqn.cone-pullback}
		f^*_{\cone}: \cone(\omega'^{p+1}) \to \cone(\omega^{p+1}),
		\qquad
		\alpha \oplus \theta' \beta \mapsto f^*\alpha \oplus \theta f^*\beta.
	\eqnd
Here, $f^*\alpha$ and $f^*\beta$ are the usual pullbacks of differential forms. 

Now let us elaborate on the naturality asserted in Proposition~\ref{prop.natural}. By naturality, we mean that given any smooth map $f: M \to M'$ such that $f^*\omega' = \omega$, the diagram
	\eqnn
		\xymatrix{
			\cone(\omega'^{p+1})\ar[d]^{ f^*_{\cone}}	\ar[r]^q 
				&\cone(\omega'^p) \ar[d]^{ f^*_{\cone}} \\
			\cone(\omega^{p+1}) \ar[r]^q 
				&\cone(\omega^p) 
		}
	\eqnnd
commutes on the nose. What this shows is that maps between symplectic manifolds induce maps between the sequences of algebras $(\cone(\omega^{p+1}))_{p \geq 0}$. So our work so far has lead to the following functorial description:

Let $\cdgadiscrete$ be the category whose objects are cdgas, and whose morphisms are maps of cdgas. We let $\ZZ_{\geq 0}$ denote the usual poset, considered as a category. We let $\fun(\ZZ_{\geq0}^{\op}, \cdgadiscrete)$ denote the functor category, whose morphisms are natural transformations. Finally, let $\sympdiscrete$ denote the category whose objects are symplectic manifolds, and whose morphisms are smooth maps respecting the symplectic forms.

\begin{prop}\label{prop.functor}
The assignments 
	\begin{itemize}
		\item
			$M \mapsto \left(\ldots \to \cone(\omega^{p+1}) \to \cone(\omega^p) \to \ldots \to \cone(\omega)\right)$ and
		\item
			$f \mapsto f^*_{\cone}$ from (\ref{eqn.cone-pullback})
	\end{itemize}
define a functor
	\eqnn
		\sympdiscrete \to \fun(\ZZ_{\geq 0}^{\op}, \cdgadiscrete).
	\eqnnd
\end{prop}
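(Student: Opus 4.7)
The plan is to verify three things: (i) for each symplectic manifold $M$, the assignment $p \mapsto \cone(\omega^{p+1})$ together with the $q$-maps is genuinely a functor $\ZZ_{\geq 0}^{\op} \to \cdgadiscrete$; (ii) for each symplectic-form-respecting map $f$, the family of maps $f^*_{\cone}$ is a natural transformation between the functors assigned to source and target; and (iii) the assignment on morphisms strictly respects identities and composition. I expect every step to be a direct check, with the only substantive point being the compatibility of $f^*_{\cone}$ with the cone differential.

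For (i), Proposition~\ref{prop.natural} already gives that each $q: \cone(\omega^{p+1}) \to \cone(\omega^p)$ is a cdga map; since $\ZZ_{\geq 0}^{\op}$ is a linear poset, the only remaining compatibility is strict composability of consecutive $q$'s, which is immediate from the formula $q(\alpha \oplus \theta\beta) = \alpha \oplus \theta\beta\om$ (applied once at level $p$ and once at level $p-1$). For (ii), I would first check that $f^*_{\cone}$ is a cdga map at each fixed $p$. Multiplicativity on the $\alpha$-summand is inherited from the fact that $f^*:\Omega(M')\to\Omega(M)$ is a cdga map, and the rule $\theta'{}^2 = 0 = \theta^2$ means this multiplicativity extends to the whole cone. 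The nontrivial part is the chain-map condition: applying $f^*_{\cone}$ to $d(\alpha \oplus \theta'\beta) = d\alpha + \om'{}^{p+1}\beta \oplus -\theta'\, d\beta$ yields $f^*d\alpha + f^*(\om'{}^{p+1})f^*\beta \oplus -\theta\, f^*d\beta$, which equals $d(f^*\alpha \oplus \theta f^*\beta)$ precisely because $f^*d = df^*$ and $f^*(\om'{}^{p+1}) = (f^*\om')^{p+1} = \om^{p+1}$ by hypothesis.

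Naturality is then a single diagram chase: tracing $\alpha \oplus \theta'\beta$ along the top-then-right gives $f^*\alpha \oplus \theta\, f^*(\beta\w\om')$, while the left-then-bottom route gives $f^*\alpha \oplus \theta\, f^*\beta \w \om$; these agree by multiplicativity of $f^*$ and the relation $f^*\om' = \om$. For (iii), both $(\id_M)^*_{\cone} = \id$ and $(g\circ f)^*_{\cone} = f^*_{\cone}\circ g^*_{\cone}$ reduce directly to the analogous statements for pullback of differential forms, together with the trivial rule that $\theta''$ is sent to $\theta'$ is sent to $\theta$. The only potential obstacle would be a hidden sign or an up-to-homotopy issue, but the explicit formula (\ref{eqn.cone-pullback}) is strict on the nose, so the target category is the ordinary $\cdgadiscrete$ (not a higher-categorical enhancement) and no such issue arises. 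Hence the assignments define the claimed functor.
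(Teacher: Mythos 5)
Your proposal is correct and follows essentially the same route as the paper: the paper's own proof consists only of the identity/composition check (your step (iii)), with the cdga-map property of $f^*_{\cone}$ and the strict commutativity of the naturality square asserted in the surrounding text rather than inside the proof. Your write-up simply makes those delegated verifications explicit, and all of your computations (the chain-map condition via $f^*(\omega'^{p+1})=\omega^{p+1}$, and the diagram chase using $f^*(\beta\wedge\omega')=f^*\beta\wedge\omega$) are accurate.
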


\begin{proof}
Identities are obviously sent to identities, since $\id^*\alpha = \alpha$ for differential forms. Composition is respected by the definition of $f^*_{\cone}$ in (\ref{eqn.cone-pullback}). 
\end{proof}

\subsection{On orthogonal homotopies}
Just as de Rham forms respect smooth homotopies, it is natural to ask whether $\cone(\omega^{\bullet+1})$ respects certain kinds of homotopies between symplectic maps. While the assignment respects other kinds of smooth homotopies if one just remembers $\cone(\omega^{\bullet +1})$ as a cdga, we have purposefully restricted ourselves to a class of smooth homotopies for which the homotopy formulas can be expressed in terms of local operators---i.e., for which we can stay in the $C^\infty$ world.

\begin{defn}\label{defn.orthogonal}
Let $F: M \times \Delta^1 \to M'$ be a smooth map, and let $\pi: M \times \Delta^1 \to \Delta^1$ be the projection. Endow both $M$ and $M'$ with symplectic forms. We will say that $F$ is an {\em orthogonal homotopy of symplectic maps}, or orthogonal homotopy for short, if
	\eqn\label{eqn.orthogonal}
		F^*\omega' = \pi^*\omega .
	\eqnd
\end{defn}

\begin{remark}
The condition of being an orthogonal homotopy is a rigid one. The condition implies that
	\eqnn
		F^*\omega'(\del_t, v) = \omega'(DF(\del_t),DF(v))
	\eqnnd
for any $v \in TM$. Hence for any time $t$, the vector $DF(\del_t)$ must always be in the symplectic orthogonal to the image of $F_t(M)$. In particular, a homotopy obtained by pre-composing a symplectic immersion $j: M \to M'$ by a symplectic isotopy of $M$ is not an example of an orthogonal homotopy unless the isotopy is trivial (i.e., constant).
\end{remark}

\begin{example}
If one is given a fibration $E \to B$ where $E$ is symplectic and the symplectic form renders each fiber symplectic, then the symplectic orthogonals to each fiber determine a horizontal distribution. Parallel transport defines an orthogonal homotopy.
\end{example}

\begin{prop}\label{prop.homotopy}
If $F: M \times \Delta^1 \to M'$ is an orthogonal homotopy, then $F$ induces a homotopy of chain maps between $(F_0)^*_{\cone}$ and $(F_1)^*_{\cone}$.
\end{prop}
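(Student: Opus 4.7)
The plan is to lift the standard de Rham chain homotopy for smooth homotopies to the cone construction. Let $h_{dR}\colon \Omega^\bullet(M') \to \Omega^{\bullet-1}(M)$ be defined by fiber integration along $\Delta^1$, namely $h_{dR}(\alpha) = \int_{[0,1]} F^*\alpha$. By Stokes's theorem for the fibration $M \times \Delta^1 \to M$, this satisfies the usual de Rham homotopy formula
\begin{equation*}
F_0^* - F_1^* = d\,h_{dR} + h_{dR}\,d \quad \text{on } \Omega^\bullet(M').
\end{equation*}

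The crucial input from the orthogonality hypothesis is the following identity, which I will prove first: for every $\beta \in \Omega^\bullet(M')$,
\begin{equation*}
h_{dR}(\omega'^{p+1} \wedge \beta) = \omega^{p+1} \wedge h_{dR}(\beta).
\end{equation*}
This follows because $F^*(\omega'^{p+1}) = (F^*\omega')^{p+1}$ equals $\pi_M^*(\omega^{p+1})$, which is a pullback from $M$ and in particular has no $dt$-component. Writing $F^*\beta = \beta_0 + dt \wedge \beta_1$ and using that $\omega^{p+1}$ has even degree, the factor $\pi_M^*(\omega^{p+1})$ slides past $dt$ and out of the fiber integral, leaving precisely $\omega^{p+1} \wedge h_{dR}(\beta)$.

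With this in hand, define $h\colon \cone(\omega'^{p+1}) \to \cone(\omega^{p+1})$, of degree $-1$, by
\begin{equation*}
h(\alpha \oplus \theta'\beta) := h_{dR}(\alpha) \oplus \bigl(-\theta\, h_{dR}(\beta)\bigr).
\end{equation*}
Verification that $dh + hd = (F_0)^*_{\cone} - (F_1)^*_{\cone}$ then proceeds by splitting into the two summands. On the $\alpha$ summand the identity reduces immediately to the de Rham homotopy formula above. On the $\theta'\beta$ summand, apply the cone differentials $d(\alpha \oplus \theta\gamma) = (d\alpha + \omega^{p+1}\gamma) \oplus -\theta\, d\gamma$ and its primed analogue: the $\omega^{p+1}\,h_{dR}(\beta)$ contribution coming from $dh(\theta'\beta)$ is exactly cancelled by the $h_{dR}(\omega'^{p+1}\beta)$ term coming from $hd(\theta'\beta)$, thanks to the orthogonality identity. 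The remaining pieces in the $\theta$-component assemble into $\theta\bigl(d\,h_{dR}(\beta) + h_{dR}(d\beta)\bigr) = \theta(F_0^* - F_1^*)\beta$, which matches $((F_0)^*_{\cone}-(F_1)^*_{\cone})(\theta'\beta)$ on the nose.

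The main obstacle is not conceptual---it is sign bookkeeping: the signs in the cone differential, in the fiber-integration formula, and in the $-\theta$ factor of $h$ must all fit together. Once the orthogonality identity $h_{dR}(\omega'^{p+1} \wedge -) = \omega^{p+1} \wedge h_{dR}(-)$ is established, the cancellation is purely algebraic and the verification is routine. It is precisely the condition $F^*\omega' = \pi_M^*\omega$ (i.e., that $F$ has no symplectic component along the homotopy direction) that makes this identity hold; for a general smooth homotopy $F^*\omega'$ would have a $dt$-component and the cancellation would fail, which is why the statement is restricted to orthogonal homotopies.
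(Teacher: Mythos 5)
Your proposal is correct and follows essentially the same route as the paper: the same homotopy operator $\tilde H(\alpha \oplus \theta'\beta) = H(\alpha) \oplus -\theta H(\beta)$ built from the standard de Rham fiber-integration operator, with the verification reduced to the key identity $H(\omega'^{p+1}\beta) = \omega^{p+1} H(\beta)$, which you derive from $F^*\omega' = \pi^*\omega$ exactly as the paper does (the paper phrases it via $\iota_{\del_t}F^*\omega' = 0$ together with $F_t^*\omega' = \omega$ for all $t$, which is the same observation as your ``no $dt$-component and constant in $t$'' argument). The only differences are cosmetic: a sign convention in the homotopy formula and the fact that the paper isolates the reduction step as a separate lemma.
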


Recall how one normally proves that a homotopy $F: M \times \Delta^1 \to M'$ induces a homotopy of chain maps between $F_0^*$ and $F_1^*$ on differential forms: One defines a degree -1 map
	\eqn\label{eqn.H}
		H: \Omega^\bullet(M') \to \Omega^{\bullet - 1}(M),
		\qquad
		H(\alpha') := \int_0^1 (\iota_{\del_t} F^*\alpha') dt
	\eqnd
where $t$ is a coordinate for $\Delta^1$. Then $H$ satisfies 
	\eqnn
		H d + d H = F_1^* - F_0^*
	\eqnnd
and hence exhibits a chain homotopy from $F_1^*$ to $F_0^*$. 

So, given a smooth homotopy $F: M \times \Delta^1 \to M'$, define a degree -1 map as follows:
	\eqn\label{eqn.H-tilde}
		\tilde H: \cone(\omega')^\bullet \to \cone(\omega)^{\bullet-1},
		\qquad
		\alpha \oplus \theta'\beta
		\mapsto
		H(\alpha) \oplus -\theta H(\beta).
	\eqnd

\begin{lemma}\label{lemma.H-tilde}
Let $F: M \times \Delta^1 \to M'$ be a smooth homotopy, and let $\tilde H$ be the operator defined in (\ref{eqn.H-tilde}).
$\tilde H$ exhibits a homotopy between the pullbacks $(F_0)^*_{\cone}$ and $(F_1)^*_{\cone}$ as defined in (\ref{eqn.cone-pullback}) if and only if one has
	\eqn\label{eqn.orthogonal-1}
		H (\omega'\beta) = \omega H(\beta).
	\eqnd
\end{lemma}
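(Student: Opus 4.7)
The proof is an entirely algebraic verification: we write out both sides of the putative homotopy identity $\tilde H\, d_{\cC'} + d_{\cC}\, \tilde H = (F_1)^*_{\cone} - (F_0)^*_{\cone}$ on a general element $\alpha\oplus \theta'\beta$, and see which part of the identity already follows from the classical homotopy formula $Hd+dH = F_1^*-F_0^*$ on ordinary forms, and which part needs the extra condition \eqref{eqn.orthogonal-1}.

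Concretely, the plan is to compute in three short steps. First, apply the differential $d_{\cC'}$ to $\alpha\oplus \theta'\beta$ (obtaining $(d\alpha+\omega'\beta)\oplus -\theta' d\beta$) and then apply $\tilde H$, keeping track of the sign convention built into $\tilde H$. Second, apply $\tilde H$ first and then $d_{\cC}$. Summing the two expressions, the $\theta$-component combines to $\theta(Hd\beta + dH\beta) = \theta(F_1^*\beta - F_0^*\beta)$ by the classical formula, which is exactly the $\theta$-part of $(F_1)^*_{\cone}-(F_0)^*_{\cone}$ and so contributes nothing extra. Third, look at the non-$\theta$ component: it equals
\begin{equation*}
Hd\alpha + dH\alpha + H(\omega'\beta) - \omega H(\beta) = (F_1^*\alpha - F_0^*\alpha) + \bigl(H(\omega'\beta) - \omega H(\beta)\bigr),
\end{equation*}
again by the classical homotopy formula. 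Comparing with the non-$\theta$ part of $(F_1)^*_{\cone}-(F_0)^*_{\cone}$, which is just $F_1^*\alpha - F_0^*\alpha$, we see that the identity holds precisely when the defect $H(\omega'\beta)-\omega H(\beta)$ vanishes for all $\beta$.

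This proves both implications simultaneously: the ``if'' direction because the defect vanishing gives the homotopy formula, and the ``only if'' direction because any chain homotopy must produce the same defect when tested against elements of the form $0\oplus\theta'\beta$, and varying $\beta$ forces \eqref{eqn.orthogonal-1}. There is no real obstacle here; the only thing to be careful about is the sign in the definition of $\tilde H$ and in the differential on $\cone$, which is why I would write out the two compositions explicitly before combining them. The subsequent Proposition~\ref{prop.homotopy} will then reduce to checking that orthogonality \eqref{eqn.orthogonal}, i.e.\ $F^*\omega' = \pi^*\omega$, implies $H(\omega'\beta) = \omega H(\beta)$; that step uses $\iota_{\partial_t}\pi^*\omega = 0$ and will be the content of the next argument, but is not needed for the present lemma.
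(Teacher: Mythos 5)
Your proposal is correct and follows essentially the same route as the paper: a direct computation of $\tilde H d + d\tilde H$ on $\alpha\oplus\theta'\beta$, using the classical formula $Hd+dH=F_1^*-F_0^*$ to absorb everything except the defect term $\left(H(\omega'\beta)-\omega H(\beta)\right)\oplus 0$, whose vanishing for all $\beta$ is then equivalent to the homotopy identity. Your explicit remark that the ``only if'' direction comes from testing against $0\oplus\theta'\beta$ and varying $\beta$ is a slightly more careful articulation of what the paper leaves implicit.
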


\begin{proof}[Proof of Lemma~\ref{lemma.H-tilde}.]
We see that
	\begin{align}
		\tilde H d +d\tilde H( \alpha \oplus \theta'\beta)
		&= \tilde H (d\alpha + \omega'\beta \oplus -\theta'd\beta) + d(H\alpha \oplus -\theta H(\beta)) \nonumber \\
		&= (Hd\alpha \oplus \theta Hd\beta) + H(\omega'\beta) \oplus 0
			+ dH\alpha \oplus \theta dH(\beta) -\omega H(\beta) \oplus 0 \nonumber\\
		&= (Hd + dH)\alpha \oplus \theta (Hd + dH)\beta + \left( H(\omega'\beta) - \omega H( \beta)\right) \oplus 0 \nonumber \\
		&= (F^*_1 - F^*_0) \alpha \oplus \theta (F_1^* - F_0^*)\beta + \left( H(\omega'\beta) - \omega H( \beta)\right) \oplus 0  \nonumber \\
		&= \left((F_1)^*_{\cone} - (F_0)^*_{\cone}\right)( \alpha \oplus \theta \beta) + \left( H(\omega'\beta) - \omega H( \beta)\right) \oplus 0   .\nonumber
	\end{align} 
\end{proof}

\begin{proof}[Proof of Proposition~\ref{prop.homotopy}.]
Writing out (\ref{eqn.H}), the lefthand side of (\ref{eqn.orthogonal-1}) becomes
	\begin{align}
		H(\omega'\beta)
		&= \int_0^1 \iota_{\del_t} F^*(\omega'\beta) dt \nonumber \\
		&= \int_0^1 (\iota_{\del_t}F^*\omega') \wedge F^*\beta + F^*\omega' \wedge \iota_{\del_t}F^*\beta dt \nonumber\\
		&= \int_0^1 (\iota_{\del_t}F^*\omega') \wedge F^*\beta + F^*\omega' \wedge \iota_{\del_t}F^*\beta dt \nonumber
	\end{align}
while the righthand side  of (\ref{eqn.orthogonal-1}) is given by
	\begin{align}
		\omega H(\beta)
		&= \omega \int_0^1 \iota_{\del_t}(F^*\beta) dt. \nonumber
	\end{align}
Now we show that a sufficient condition for (\ref{eqn.orthogonal-1}) to hold is the given by (\ref{eqn.orthogonal}). 

The condition (\ref{eqn.orthogonal}) implies that for every time $t$, $F_t^* \omega' = \omega$. This is because, letting $i_t: M \into M \times \Delta^1$ be the inclusion at time $t$, we have that
	\eqnn
		(F \circ i_t)^* \omega' 
		= i_t^* F^* \omega'
		= i_t^* \pi^*\omega
		= (\pi \circ i_t)^*\omega
		= \id_M^*\omega
		= \omega.
	\eqnnd
In other words, $F$ is a homotopy through maps that respect the symplectic form. Even more strongly, the definition implies that for all $v \in TM$, we have
	\eqnn
		(\iota_{\del_t}F^*\omega')v
		= F^*\omega'(\del_t, v)
		= \pi^*\omega (\del_t,v)
		= \omega ( D\pi(\del_t), D\pi(v))
		= \omega(0,D\pi(v))
		= 0.
	\eqnnd
Hence the definition implies that
	\eqnn
		\iota_{\del_t} F^*\omega' = 0.
	\eqnnd
Because $F_t^*\omega' = \omega$ for all $t$, this is enough for (\ref{eqn.orthogonal-1}) to hold.
\end{proof}

\begin{remark}
One would like to further say that orthogonal homotopies determine homotopies between the {\em algebra} maps, rather than just the chain maps. However, we have not delved into the algebraic theory for homotopies in the smooth category for the reasons mentioned in Remark~\ref{remark.C-infty-linear}; for instance, the usual definition for a homotopy between cdga maps uses polynomial forms on the 1-simplex, which are only suitable when one deals with smooth homotopies whose dependence on $t$ happens to be polynomial. (I.e., not very often.) 
\end{remark}

\subsection{Sheaf property}
Note that being primitive is a local property. Moreover, $\del_-$ and $\del_+$ are local operators, so the differentials of $\cF$ are compatible with the restriction of differential forms from one open subset to another. All this data forms a sheaf:

\begin{theorem}\label{thm.sheaf}
The assignment $U \mapsto \cF(U)$ is a homotopy sheaf of $A_\infty$-algebras on $M$. 
\end{theorem}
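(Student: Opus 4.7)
The plan is to use the equivalence $\cF \simeq \cone(\omega^{\bullet+1})$ of Theorem~\ref{thm.cone} to reduce descent for $\cF$ to the classical fact that the de Rham cdga satisfies \v{C}ech descent.

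First I would verify that the equivalence of Theorem~\ref{thm.cone} is natural with respect to open inclusions $V \subset U$. Every operator appearing in the chain maps $f$, $g$ of Definition~\ref{defn.fg}, in the homotopy $G$, and in the higher component $g^2$ of Theorem~\ref{thm.Aoo-map}---namely $L$, $L^{-(p+1)}$, $\Pi^p$, $\Pi^{p\ast}$, $\ast_r$, $d$, and $\del_\pm$---is a local operator on forms, hence commutes with the restriction maps $\Omega(U) \to \Omega(V)$. Thus $g$ promotes to a natural transformation of presheaves of $A_\infty$-algebras $\cF \to \cC$ which is a pointwise equivalence, and it suffices to prove that $\cC = \cone(\omega^{\bullet+1})$ is a homotopy sheaf of cdgas.

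Next I would deduce descent for $\cC$ from descent for $\Omega^\bullet$. The de Rham cdga is a classical homotopy sheaf on $M$: for any good open cover $\mathcal{U} = \{U_\alpha\}$ of an open $U \subset M$, the natural map $\Omega(U) \to \holim_{\bDelta}\check{C}^\bullet(\mathcal{U};\Omega)$ is a quasi-isomorphism (\v{C}ech--de Rham). Since $\cC(U) = \Omega(U) \oplus \theta\,\Omega(U)[-2p-1]$ with differential built from $d$ and the local operator $\wedge\,\omega^{p+1}$, the presheaf $\cC$ is obtained pointwise from two shifted copies of $\Omega$ by a mapping cone construction, which is a finite homotopy colimit in chain complexes. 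In the stable setting of chain complexes such finite colimits commute with the homotopy limits used in \v{C}ech descent, so $\cC$ inherits the descent property; and the cdga product, assembled from wedge and multiplication by $\theta$, is local, so the descent is compatible with the algebra structure.

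The main obstacle will be pinning down the framework in which ``homotopy sheaf of $A_\infty$-algebras'' is to be interpreted. I expect to work in the $\infty$-category $\Ainftycat$, in which cdgas embed fully faithfully and in which the quasi-inverse pair of Theorem~\ref{thm.cone} becomes an equivalence of objects; \v{C}ech descent in $\cdga$ then transfers along this equivalence to descent for $\cF$, yielding the theorem.
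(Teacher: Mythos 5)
Your proposal is correct, and its first half coincides with the paper's own argument: both reduce descent for $\cF$ to descent for $\cC$ by observing that every operator entering the equivalence of Theorem~\ref{thm.cone} is a local operator on forms (hence compatible with restrictions), and both dispose of the multiplicative structure by noting that limits of $A_\infty$-algebras are created on underlying cochain complexes. Where you genuinely diverge is in proving that $\cC$ itself satisfies descent. The paper invokes the classical fact that a strict sheaf of cochain complexes whose graded pieces are \emph{soft} sheaves is automatically a homotopy sheaf, and checks directly that each $\cC^k = \Omega^k \oplus \theta\,\Omega^{k-2p-1}$ is soft. You instead exhibit $\cC$ as the pointwise mapping cone of the map of presheaves $\Omega[-2p-2] \xra{\wedge\,\omega^{p+1}} \Omega$ and use that, in the stable $\infty$-category of cochain complexes, cofibers are shifted fibers and hence commute with the totalizations appearing in \v{C}ech descent; this reduces everything to descent for the de Rham presheaf. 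Both routes are sound and ultimately rest on the same analytic input (partitions of unity), just packaged differently: the paper's softness argument is self-contained, while yours isolates the de Rham complex as the only geometric ingredient and treats $\cC$ purely formally --- which has the advantage of applying verbatim to any presheaf built from $\Omega$ by finite (co)limits. One small correction: do not restrict to \emph{good} covers. The definition of homotopy sheaf used in the paper requires descent for arbitrary open covers, and the \v{C}ech--de Rham argument (exactness of the rows of the double complex via partitions of unity) delivers exactly that, so the word ``good'' should be dropped; good covers are only needed when one wants the columns to be exact as well.
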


We recall that if $\cC$ is an $\infty$-category, a presheaf on $M$ with values in $\cC$ is a functor
	\eqnn
		\cF: N(\open(M))^{\op} \to \cC.
	\eqnnd
Here, $\open(M)$ is the category of open subsets of $M$, and $N$ is its nerve (i.e., the associated $\infty$-category). We call $\cF$ a sheaf is the following holds: For any open cover $\{U_\alpha \}$ of any open set $U$, the associated augmented cosimplicial diagram
	\eqnn
		\xymatrix{
		\cF(U) \ar[r] & \prod_\alpha \cF(U_\alpha) \ar@2[r] & \prod_{\alpha,\beta} \cF(U_\alpha \cap U_\beta) \ar@3[r] & \ldots
		}
	\eqnnd
is a limit diagram. 

\begin{proof}
Let $\sA_\infty\Alg$ be the $\infty$-category of $A_\infty$-algebras over $\RR$. Then the assignment $U \mapsto \cF(U)$ defines a functor
	\eqnn
		\cF: N(\open(M))^{\op} \to \sA_\infty\Alg.
	\eqnnd
Moreover, since the equivalences $\cF \to \cC$ are made of local operators (hence compatible with restriction maps), it suffices to show that $U \mapsto \cC(U)$ is a homotopy sheaf. We do this now.

Since the forgetful functor $\sA_\infty\Alg \to \chain$ creates limits, one need only prove that this augmented diagram is a limit diagram in the $\infty$-category of cochain complexes. But in general, if $\cF$ is a sheaf (in the classical sense) of cochain complexes in which the degree $n$ presheaf $\cF^n$ is a soft sheaf for every $n$, then $\cF$ is itself a homotopy sheaf. That is, the augmented diagram in cochain complexes is a limit diagram.

So one simply needs to check that $\cC$ is a complex of soft sheaves. This is elementary---each $\cC^k$ is soft because differential forms on a closed subset can extend to a differential form globally. $\cC$ is a sheaf for the same reasons that differential forms form a sheaf.
\end{proof}

\begin{remark}
In fact, if one considers the $\infty$-category
	\eqnn
		\fun(\ZZ_{\geq 0}^{\op},\sA_\infty\Alg)
	\eqnnd
then we see that $\cF$ defines a sheaf on $M$ with this target $\infty$-category---this is because limits in a diagram category are computed pointwise (i.e., one need only check that for every $n \in \ZZ_{\geq 0}$, the resulting cosimplicial diagram is a limit diagram). Note that $\sA_\infty\Alg$ contrasts with $\underline{\cdga}$, which did not incorporate higher homotopies. And if one uses the model of filtered forms using the cone construction---utilizing the equivalence of Theorem~\ref{thm.cone}---then one can make a stronger statement: The assignment 
	\eqnn
		U \mapsto \left(\ldots \to \cone(\omega^{p+1})(U) \to \cone(\omega^p)(U) \to \ldots \to \cone(\omega)(U)\right)
	\eqnnd
is a homotopy sheaf in the $\infty$-category $\fun(\ZZ_{\geq 0}^{\op}, \cdga)$ where we remove the underline.
\end{remark}

\subsection{As sheaf cohomology}
Recall Leray's theory of sheaf cohomology, which in \hyphenation{Gro-then-dieck}Grothendieck's language is obtained as a right derived functor of the global sections functor $F \mapsto F(M)$. 
It is a consequence of the de Rham-Weil theorem that these cohomology groups can be computed by taking an acyclic (rather than injective) resolution.

One sees immediately that the de Rham forms $\Omega^k(M)$ are soft, as we mentioned in the proof of Theorem~\ref{thm.sheaf}. Likewise, it is clear that each sheaf $\cF_p^k$ or $\cone(\omega^{p+1})^k$ is a soft sheaf on $M$. So we must ask---is there a natural sheaf for which these complexes are a resolution?

Though the following discussion is valid using the $A_\infty$-algebra $\cF$ as well, we will focus on the sheaf $\cone(\omega^{p+1})$. The following should be compared to Proposition~3.3 of~\cite{tsai-tseng-yau}:

\begin{prop}\label{prop.poincare}
Locally on $M$, the sequence
	\eqnn
		\Omega^{k-1} \oplus \theta \Omega^{k-2p-2}
		\to
		\Omega^k \oplus \theta \Omega^{k-2p-1}
		\to
		\Omega^{k+1} \oplus \theta \Omega^{k-2p}
	\eqnnd
is exact for $k \geq 2p+2$.
\end{prop}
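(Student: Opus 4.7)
The plan is to prove local exactness by two successive applications of the classical Poincar\'e lemma, valid because we may restrict to a contractible coordinate neighborhood in $M$. Unpacking the differential from Definition~\ref{defn.C}, a pair $(\alpha \oplus \theta\beta) \in \Omega^k \oplus \theta\Omega^{k-2p-1}$ lies in the kernel of the outgoing map precisely when $d\alpha + \omega^{p+1}\beta = 0$ and $d\beta = 0$. So the task is to produce $(\alpha'' \oplus \theta\beta'') \in \Omega^{k-1} \oplus \theta\Omega^{k-2p-2}$ satisfying $d\alpha'' + \omega^{p+1}\beta'' = \alpha$ and $-d\beta'' = \beta$.

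First I would tackle $\beta$. Since $\beta$ is closed of degree $k-2p-1 \geq 1$, the Poincar\'e lemma produces a local primitive $\gamma \in \Omega^{k-2p-2}$ with $d\gamma = \beta$; I would then set $\beta'' := -\gamma$, so that $-d\beta'' = \beta$ as required. Next, observe that $\alpha - \omega^{p+1}\beta''$ is itself closed:
\[ d(\alpha - \omega^{p+1}\beta'') = d\alpha - \omega^{p+1} d\beta'' = d\alpha + \omega^{p+1}\beta = 0, \]
using that $\omega$ is closed together with the kernel conditions. Because $\alpha - \omega^{p+1}\beta''$ has degree $k \geq 2p+2 \geq 2 > 0$, a second application of the Poincar\'e lemma on the chart yields $\alpha'' \in \Omega^{k-1}$ with $d\alpha'' = \alpha - \omega^{p+1}\beta''$, i.e. $d\alpha'' + \omega^{p+1}\beta'' = \alpha$. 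This exhibits $(\alpha'' \oplus \theta\beta'')$ as a preimage, completing the verification of exactness at the middle term.

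I expect the only delicate point to be the degree hypothesis $k \geq 2p+2$: it is precisely what guarantees that $\beta$ has strictly positive degree, so that closedness implies exactness locally. If instead $k = 2p+1$, then $\beta$ would be a locally constant function, and nonzero constants are not exterior derivatives, obstructing the step $-d\beta'' = \beta$; the hypothesis is therefore sharp. The order of the two applications also matters: one must solve for $\beta''$ \emph{first}, because it is needed to produce the closed form $\alpha - \omega^{p+1}\beta''$ whose primitive supplies $\alpha''$.
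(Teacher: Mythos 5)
Your argument is correct and follows essentially the same route as the paper's proof: solve $-d\beta''=\beta$ first by the Poincar\'e Lemma, observe that $\alpha-\omega^{p+1}\beta''$ is then closed, and apply the Poincar\'e Lemma again to find $\alpha''$. Your added remark on the sharpness of the hypothesis $k\geq 2p+2$ (so that $\beta$ has positive degree) is a sensible observation that the paper makes only implicitly.
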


\begin{proof}
Evaluate the sequence on some open set $U$ which is diffeomorphic to $\RR^{2n}$. If $\alpha\oplus \theta \beta$ is in the kernel of $d|_{U}$, then we know $d\alpha = -\omega^{p+1} \beta$ and $d\beta = 0$. The Poincar\'e Lemma says we can find $\beta'$ such that $-d\beta' = \beta$. Moreover, $\alpha - \omega^{p+1}\beta'$ is closed, so the Poincar\'e Lemma also allows us to find $\alpha'$ so that $d\alpha' = \alpha - \omega^{p+1} \beta'$. Thus $\alpha\oplus\theta\beta = d(\alpha'\oplus\theta\beta')$. 
\end{proof}

The exactness fails at the $2p+1$ group of $\cone(\omega^{p+1})$. Of course, the complex is exact at degrees $1 \leq k \leq 2p$ by the usual Poincar\'e Lemma. For this reason, the sheaf cohomology description is cleanest when $p=0$, though one can easily formulate the analogue for higher $p$.

\begin{defn}
For any open set $U \subset M$, let $\prim'(U)$ be the vector space whose elements are pairs $(\alpha, b)$ where $b$ is a locally constant function on $U$, and $\alpha$ is a 1-form such that $d\alpha = -b\omega|_U$. We let $\prim$ denote the sheafification of $\prim'$.
\end{defn}
 
\begin{prop}
$\cone(\omega^p)$ is a soft resolution of the two-term complex of sheaves
	\eqnn
		\Omega^0 \to \prim,
		\qquad
		g \mapsto (dg, 0).
	\eqnnd
\end{prop}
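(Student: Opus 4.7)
The plan is to exhibit a quasi-isomorphism $\phi : \cL^\bullet \to \cone(\omega)$ of complexes of sheaves, where $\cL^\bullet$ denotes the two-term complex $\Omega^0 \xra{g \mapsto (dg,0)} \prim$ (concentrated in degrees $0$ and $1$), and then to observe that each component $\cone(\omega)^k = \Omega^k \oplus \theta\,\Omega^{k-1}$ is a soft sheaf. Together, these facts exhibit $\cone(\omega)$ as a soft resolution of $\cL^\bullet$ in the classical sense, so its global sections compute the sheaf hypercohomology of $\cL^\bullet$.

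To define $\phi$, I would set $\phi^0 = \id$ on $\Omega^0 = \cone(\omega)^0$, and $\phi^1 : \prim \to \cone(\omega)^1 = \Omega^1 \oplus \theta\,\Omega^0$ by $(\alpha,b) \mapsto \alpha \oplus \theta\,b$. Compatibility with differentials reduces to $d_{\cone}(g) = dg \oplus 0 = \phi^1(dg,0)$, which is immediate. The map $\phi^1$ does land in the kernel of $d_{\cone}$: since $b$ is locally constant, $db = 0$; and the defining condition $d\alpha = -b\,\omega$ of $\prim$ is precisely the closedness of $\alpha \oplus \theta\,b$ in $\cone(\omega)^1$.

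To verify $\phi$ is a quasi-isomorphism, I would compute stalks, or equivalently evaluate on an open $U$ diffeomorphic to $\RR^{2n}$. In degree zero, an element of $H^0(\cL^\bullet)(U)$ is a function $g$ with $(dg,0)=0$, i.e.\ a locally constant function, giving $\RR$; this matches $H^0(\cone(\omega))(U) = \RR$, and $\phi^0$ is the identity there. In degree one, a closed element of $\cone(\omega)^1(U)$ is exactly a pair $\alpha \oplus \theta\,b$ with $d\alpha + \omega b = 0$ and $b$ locally constant, i.e.\ an element of $\prim(U)$; the coboundaries from degree zero in both $\cL^\bullet$ and $\cone(\omega)$ are $\{(dg,0)\}$, so $\phi^1$ descends to an isomorphism on $H^1$. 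In degrees $k \geq 2$, $\cL^\bullet$ vanishes, and $\cone(\omega)(U)$ has vanishing cohomology by Proposition~\ref{prop.poincare} (the hypothesis $k \geq 2p+2 = 2$ is automatic for $p=0$).

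Softness of each $\cone(\omega)^k$ follows because $\Omega^k$ is a soft sheaf on $M$---smooth sections extend across closed subsets using partitions of unity---and a finite direct sum of soft sheaves is soft. The only non-routine input is Proposition~\ref{prop.poincare} for the local acyclicity in degrees $\geq 2$; the remaining verifications in degrees $0$ and $1$ are direct unpacking of definitions, so I expect no further obstacle.
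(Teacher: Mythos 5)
Your proof is correct and follows essentially the same route as the paper: exhibit the two-term complex as the (local) kernel/cohomology of the soft complex $\cone(\omega)$ in degrees $0$ and $1$, and invoke Proposition~\ref{prop.poincare} for local acyclicity in degrees $\geq 2$. The explicit map $\phi^1:(\alpha,b)\mapsto \alpha\oplus\theta b$ and the softness check are exactly the inclusion and the soft-sheaf observation the paper uses (the latter stated just before the proposition), so there is nothing to add.
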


\begin{proof}
We simply need to show that the inclusion
	\eqnn
		\xymatrix{
			\Omega^0 \ar[r]^d \ar[d] & \prim \ar[r]\ar[d]  & 0 \ar[d] \ar[r] & \ldots\ar[d]  \\
			\Omega^0 \ar[r]^-d & \Omega^1 \oplus \theta \Omega^0 \ar[r] & \Omega^2 \oplus \theta \Omega^1 \ar[r] & \ldots
		}
	\eqnnd
is an equivalence of complexes of sheaves---that is, that the map induces an isomorphism of cohomology sheaves $\cH^k$. We already saw in Proposition~\ref{prop.poincare} that the $\cone(\omega)$ complex is indeed locally acyclic at $k \geq 2$. So one need only show an isomorphism at $\cH^0$ and $\cH^1$. The isomorphism at $\cH^0$ is obvious, as the inclusion naturally identifies the kernels of $d$---the locally constant functions. The isomorphism at $\cH^1$ is also obvious, as $\prim'$ is defined to locally be the kernel of the differential $\Omega^1 \oplus  \theta \Omega^0 \to \Omega^2 \oplus \theta \Omega^1$. 
\end{proof}

\begin{remark}
A similar proof shows the trivial result that $\cone(\omega^{p+1})$ is a soft replacement for the complex of sheaves
	\eqnn
		\Omega^0 \to \ldots \to \Omega^{2p} \to \ker(d|_{\Omega^{2p+1} \oplus \theta \Omega^0}) \to 0
	\eqnnd
which, of course, is soft away from the last non-zero entry.
\end{remark}

\subsection{Looking forward: Weinstein functoriality}
Each sequence of cdgas $\cone(\omega^{\bullet+1})$ depends only on the cohomology class of $\omega$. However, the equivalence $\cone(\omega) \simeq \cone(\omega')$ from Corollary~\ref{cor.cohomology-class} relied on a choice of $\eta$ such that $d\eta = \omega' - \omega$. At this point, we see two doorways into further structure:
\enum
	\item
		The first appears when we recall that maps which respect $\omega$ are only one kind of morphism between symplectic manifolds. As advocated by Weinstein, a more natural notion of morphism is given by submanifolds of $M \times M'$. In general, graphs of symplectomorphisms give rise to Lagrangian submanifolds of $M \times M'$, but the fact that $\cone(\omega^{\bullet+1})$ is functorial with respect to all maps respecting $\omega$ suggests the following: {\em isotropic} submanifolds of $M \times M'$ should also be included in the class of morphisms we consider. (Graphs of smooth maps respecting $\omega$ are, in general, isotropic.) In other words, we should look for a Weinstein functoriality with respect to isotropic correspondences between symplectic manifolds---not just smooth maps respecting $\omega$.
	\item
		The second is that {\em specifying} forms $\eta$ that realize the equality $[\omega] = [\omega']$ is a natural piece of data to include in whatever kind of functoriality we consider. This is a picture one has seen before: For instance, in the framework of~\cite{ptvv}, derived symplectic manifolds and their Lagrangians all carry with them {\em additional data} showing how symplectic forms are closed, and how their restrictions to Lagrangians are null. 
\enumd

Following through on the second doorway goes beyond the scope of our current paper, as we would need to seriously develop the machinery for derived smooth manifolds, for their cotangent complexes, and for writing down a stack classifying closed 2-forms. (Roughly, we would need a $C^\infty$ analogue of all the basic ingredients set up in~\cite{ptvv} in the context of derived {\em algebraic} geometry.) So here we merely open the first door.

Let $j: L \to M_1 \times M_2$ be an immersion. Also assume that $j^*\omega_1 = j^*\omega_2$, so that $L$ is an immersed isotropic submanifold of $(M_1 \times M_2, -\omega_1 \oplus \omega_2)$. 

\begin{defn}
We let $\cone_L(\omega^p)$ denote the mapping cone of the map
	\eqnn
		\Omega(L)[-2p] \xra{j^*\omega_2} \Omega(L).
	\eqnnd
\end{defn}

\begin{remark}
One should expect a construction of $\cone_L$ having the same flavor as the construction of the $\cF_p$. We do not know of any such construction at present.
\end{remark}

Because $j$ is an isotropic immersion, we see immediately:

\begin{prop}
Let $\cone_{M_i}(\omega_i^p)$ denote the cone algebra for $M_i$. The composite smooth maps $L \to M_1 \times M_2 \to M_i$ induce maps $\Omega(M_i) \to \Omega(L)$, and these in turn induce maps of cdgas
	\eqnn
		\cone_{M_1}(\omega_1^p) \to \cone_L(\omega^p) \leftarrow \cone_{M_2}(\omega_2^p).
	\eqnnd
In other words, $\cone_L(\omega^p)$ is a bimodule for each cone algebra.
\end{prop}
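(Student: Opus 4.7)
The plan is to verify that the pullback construction in (\ref{eqn.cone-pullback}) applies to each composite map $j_i := \pi_i \circ j : L \to M_i$, where $\pi_i : M_1 \times M_2 \to M_i$ is the projection. The key observation is that the isotropic hypothesis $j^*\omega_1 = j^*\omega_2$ is precisely what is needed to make both pullbacks land consistently in a single cone algebra on $L$.

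First, set $\omega_L := j^*\omega_2 = j^*\omega_1$. This is the 2-form used to define the differential on $\cone_L(\omega^{p+1})$, where the formal generator $\theta$ satisfies $d\theta = \omega_L^{p+1}$. For each $i \in \{1,2\}$, compute
$$j_i^*\omega_i = j^* \pi_i^* \omega_i = j^*\omega_i = \omega_L,$$
using the isotropic condition in the case $i = 1$. Thus each $j_i : L \to M_i$ is a smooth map respecting the symplectic forms, and Proposition~\ref{prop.functor} (or just formula (\ref{eqn.cone-pullback}) applied in this setting) yields a cdga map
$$(j_i)^*_{\cone} \,:\, \cone_{M_i}(\omega_i^{p+1}) \to \cone_L(\omega^{p+1}), \qquad \alpha \oplus \theta_i \beta \;\longmapsto\; j_i^*\alpha \oplus \theta \, j_i^*\beta,$$
where $\theta_i$ denotes the formal generator of the source cone. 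The chain map condition reduces to the identity $\omega_L^{p+1}\wedge j_i^*\beta = j_i^*(\omega_i^{p+1}\wedge \beta)$, which holds by the computation above and the fact that pullback commutes with wedge products; multiplicativity of $(j_i)^*_\cone$ is inherited from multiplicativity of the pullback of forms.

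Having produced the two cdga maps, the bimodule structure comes for free: any cdga $B$ receiving cdga maps $A_i \to B$ from a pair of cdgas $A_1, A_2$ is automatically an $(A_1, A_2)$-bimodule, with the left and right actions given by
$$A_1 \tensor B \xrightarrow{(j_1)^*_\cone \tensor \id} B \tensor B \xrightarrow{m} B, \qquad B \tensor A_2 \xrightarrow{\id \tensor (j_2)^*_\cone} B \tensor B \xrightarrow{m} B.$$
Associativity, unitality, and the commutation of the two actions follow from associativity and graded commutativity of the multiplication on $\cone_L(\omega^{p+1})$.

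The main (minor) obstacle is purely bookkeeping: one must keep the three distinct formal variables $\theta_1, \theta_2, \theta$ straight across the three cone algebras and verify that the isotropic condition is exactly what makes one target differential compatible with both source differentials. Once this identification is made, the statement is an immediate application of the functoriality developed in Proposition~\ref{prop.functor}, together with the tautological bimodule structure on any cdga sitting between two cdga maps.
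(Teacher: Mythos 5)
Your proof is correct and follows exactly the route the paper has in mind (the paper states the proposition as an immediate consequence of $j$ being isotropic and gives no further argument): the isotropic condition forces $j_1^*\omega_1 = j_2^*\omega_2 = \omega_L$, so the pullback formula (\ref{eqn.cone-pullback}) produces both cdga maps into the single cone algebra on $L$, and the bimodule structure is tautological. One small caution: Proposition~\ref{prop.functor} is stated for maps in $\sympdiscrete$, and $(L,\omega_L)$ is not a symplectic manifold, so you are right to fall back on the formula (\ref{eqn.cone-pullback}) itself, which only needs $\omega_L$ to be a closed even-degree form.
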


In particular, any Lagrangian immersion defines a bimodule between the algebras associated to each $M_i$. Note also that if $j:L \to M_1 \times M_2$ is not strictly an isotropic map, but if one can specify a form $\eta_L$ such that $d\eta_L = j^*\omega_2 - j^*\omega_1$, then this specifies an equivalence
	\eqnn
		\cone_L(\omega_1) \simeq \cone_L(\omega_2)
	\eqnnd
hence one still has a bimodule over the two cone algebras given by the $M_i$.
This is exactly the kind of data that we should witness when following through with door (2), which we leave for later work.

Finally, we also leave for later work the proof of Weinstein functoriality in (1), which likewise requires a development of some derived smooth geometry to incorporate non-transverse intersections of Lagrangian and isotropic correspondences. The expectation, of course, is that compositions of Lagrangian correspondences are taken to tensor products of bimodules under the functor $\cF_\bullet$ (or equivalently, the functor $\cone(\omega^{\bullet+1})$).

\section{Appendix: Proof that \texorpdfstring{$\cone(\omega)$}{Cone(omega)} is equivalent to differential forms on a sphere bundle}
Assume $[\omega]$ is an integral cohomology class on $M$. Then $[\omega]$ classifies a complex line bundle $L$ on $M$, and hence a circle bundle. More generally, the higher powers $[\omega^{p+1}]$ are the top Chern classes of the vector bundles $L^{\oplus p+1}$. There is a natural smooth sphere bundle $E_p$ associated to this vector bundle. 

In this appendix, we give a proof of the following: 

\begin{theorem}\label{theorem.appendix}
Let $M$ be an arbitrary symplectic manifold, and $\omega$ a symplectic form defining an integral cohomology class. Then there is an equivalence of cdgas
	\eqnn
		\Omega(E_p) \simeq \cone(\omega^{p+1}).
	\eqnnd
\end{theorem}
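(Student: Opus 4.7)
The plan is to construct an explicit cdga map $\Phi: \cone(\omega^{p+1}) \to \Omega(E_p)$ via a global angular form on the sphere bundle, and then prove $\Phi$ is a quasi-isomorphism by a Mayer-Vietoris / \v{C}ech-de Rham argument over a good cover of $M$.

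First, since $E_p$ is the unit sphere bundle of $L^{\oplus(p+1)}$, whose top Chern class is represented by $\omega^{p+1}$, Chern-Weil theory (cf.~Bott-Tu, \S 11) produces a global angular form $\psi \in \Omega^{2p+1}(E_p)$ satisfying $d\psi = -\pi^*(\omega^{p+1})$ and $\int_{\mathrm{fiber}} \psi = 1$. I would then define
\begin{align*}
\Phi: \cone(\omega^{p+1}) &\to \Omega(E_p), \\
\alpha \oplus \theta\beta &\mapsto \pi^*\alpha - \psi \wedge \pi^*\beta.
\end{align*}
Checking that $\Phi$ is a chain map is a short computation using $d\psi = -\pi^*\omega^{p+1}$, and checking multiplicativity reduces to the identity $\psi \wedge \psi = 0$ (automatic since $\psi$ has odd degree), which exactly mirrors the relation $\theta^2 = 0$ built into $\cone(\omega^{p+1})$.

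The main step is showing $\Phi$ is a quasi-isomorphism. Both $U \mapsto \Omega(\pi^{-1}(U))$ and $U \mapsto \cone(\omega^{p+1}|_U)$ are presheaves of cochain complexes on $M$ that satisfy Mayer-Vietoris, and $\Phi$ is a map of presheaves. So I would fix a good cover $\{U_\alpha\}$ of $M$ such that every finite intersection $U_{\alpha_0 \cdots \alpha_k}$ is diffeomorphic to $\mathbb{R}^{2n}$ and $E_p$ trivializes over it. On such a contractible $U$, the form $\omega^{p+1}|_U$ is exact, say $\omega^{p+1}|_U = d\eta$; hence $\wedge\, \omega^{p+1}|_U$ is null-homotopic and $\cone(\omega^{p+1}|_U)$ is quasi-isomorphic to $\Omega(U)\oplus\Omega(U)[-2p-1] \simeq \mathbb{R}\oplus \mathbb{R}[-2p-1]$. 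Likewise, $\pi^{-1}(U) \cong U \times S^{2p+1}$, so a K\"unneth-type deformation retraction gives $\Omega(\pi^{-1}(U)) \simeq \mathbb{R}\oplus\mathbb{R}[-2p-1]$. A direct comparison shows that $\Phi|_U$ sends the generator $1$ to $1$ and the generator $\theta$ (after modifying by the primitive $\eta$) to a form cohomologous to a fiber volume form, so it realizes this equivalence.

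With the base case established, the global statement follows by induction on the nerve of the cover via the five-lemma applied to the Mayer-Vietoris sequences on both sides. Equivalently, one could compare the \v{C}ech-de Rham spectral sequences associated to each presheaf: both have $E_2$-page identified with the kernel/cokernel decomposition of $\wedge \omega^{p+1}$ on $H^*(M)$ (i.e.\ the Gysin/triangled sequence \eqref{triangled}), and $\Phi$ manifestly induces the identity at $E_2$, so convergence forces $\Phi$ to be a quasi-isomorphism globally. The main obstacle I anticipate is the bookkeeping in the double complex---in particular, ensuring that the local null-homotopies of $\wedge \omega^{p+1}|_{U_\alpha}$ can be assembled compatibly with the \v{C}ech differential without spoiling the algebra map property. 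The cleanest resolution is to refrain from writing these homotopies down and instead invoke the abstract fact that a morphism of homotopy sheaves of cochain complexes which is a local equivalence (stalkwise, or over every element of a good cover) is a global equivalence, as used in the sheaf-theoretic arguments of Section~\ref{section.functoriality}.
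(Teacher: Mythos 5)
Your proposal is correct and follows essentially the same route as the paper's proof: construct the global cdga map out of $\cone(\omega^{p+1})$ using a global angular form with $d\psi = \mp\pi^*\omega^{p+1}$, verify it is a quasi-isomorphism over each contractible element of a good cover (where both sides reduce to $\RR\oplus\RR[-2p-1]$), and globalize via the \v{C}ech--de Rham double complex. The compatibility worry you raise at the end is already resolved by your own setup---since $\Phi$ is defined globally via the single form $\psi$, the local equivalences are just its restrictions and assemble automatically, which is exactly the point the paper makes about the global angular form.
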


\begin{remark}
This result is obvious if $M$ is simply-connected---by rational homotopy theory, the cone is a standard cdga model for forms on the total space of an odd-dimensional sphere bundle. In fact, so long as $M$ is simply-connected, the result also holds for $[\omega]$ an arbitrary {\em real} cohomology class, so long as $E_p$ is replaced by the fibration classified by the real cohomology class $[\omega^{p+1}]$. (Such an $E_p$ has fibers equivalent to the Eilenberg-MacLane spaces $K(2p+1;\RR)$.)
\end{remark}

\begin{remark}
Fix a real number $k\neq 0$. Note that if one scales $\omega$ to $k\omega$, one has an isomorphism of cdgas as follows:
	\eqnn
		K: \cone(\omega^{p+1}) \simeq \cone(k^{p+1}\omega^{p+1}),
		\qquad
		\eta \oplus \theta \zeta
		\mapsto
		\eta \oplus {\frac 1 {k^{p+1}}} \theta \zeta.
	\eqnnd
\end{remark}

The proof of the theorem uses the following easy lemma:

\begin{lemma}
Theorem~\ref{theorem.appendix} holds when $M$ is contractible.
\end{lemma}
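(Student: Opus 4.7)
Since $M$ is contractible, every vector bundle over $M$ is trivial; in particular, the rank-$(p+1)$ complex vector bundle classified by $[\omega^{p+1}]$ is trivial, so the associated sphere bundle splits smoothly as a product, $E_p \cong M \times S^{2p+1}$. The plan is to produce an explicit cdga equivalence between $\cone(\omega^{p+1})$ and $\Omega(E_p)$ by routing through a standard cdga model for $S^{2p+1}$, with the exactness of $\omega^{p+1}$ on contractible $M$ absorbing the nontrivial differential of the cone.

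First, one has the usual K\"unneth quasi-isomorphism of cdgas
	\eqnn
		\Omega(M) \tensor \Omega(S^{2p+1}) \xra{\sim} \Omega(M \times S^{2p+1}) = \Omega(E_p),
	\eqnnd
defined by $\eta \tensor \mu \mapsto \pi_1^*\eta \w \pi_2^*\mu$. Let $S$ denote the minimal Sullivan model of $\Omega(S^{2p+1})$, i.e.\ the cdga freely generated by a single closed element $\theta'$ in degree $2p+1$ (with $\theta'^2 = 0$ automatic from oddness). A choice of normalized volume form on $S^{2p+1}$ furnishes a cdga quasi-isomorphism $S \xra{\sim} \Omega(S^{2p+1})$. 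Tensoring on the left with $\Omega(M)$ produces a cdga quasi-isomorphism $\Omega(M) \tensor S \xra{\sim} \Omega(E_p)$.

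Next, I would construct an explicit cdga \emph{isomorphism} $\phi: \cone(\omega^{p+1}) \xra{\cong} \Omega(M) \tensor S$. Because $M$ is contractible, the Poincar\'e Lemma provides $\psi \in \Omega^{2p+1}(M)$ with $d\psi = \omega^{p+1}$. Define
	\eqnn
		\phi(\eta + \theta\, \xi) := (\eta + \psi\, \xi) + \theta' \xi.
	\eqnnd
Morally this is the substitution $\theta \mapsto \theta' + \psi$, which is invertible on the underlying graded vector space, so $\phi$ is bijective. A direct calculation shows $\phi$ is a chain map: the $\omega^{p+1}\xi$ term produced by $d\psi$ on the target matches the $\omega^{p+1}\xi$ term produced by $d\theta = \omega^{p+1}$ on the source, while the residual $\psi\, d\xi$ terms cancel using that $|\psi|$ is odd. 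Algebra compatibility follows from expanding $\phi(x_1)\phi(x_2)$; the key cancellations use $\psi^2 = 0 = \theta'^2$ (both odd-degree) and $\psi\theta' = -\theta'\psi$, so that the would-be spurious $\theta'\psi\, \xi_1 \xi_2$ contributions cancel pairwise.

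Composing $\phi$ with the K\"unneth quasi-isomorphism then yields the desired cdga equivalence $\cone(\omega^{p+1}) \simeq \Omega(E_p)$. The step requiring the most care is the sign bookkeeping in the chain-map and algebra-map checks for $\phi$; the remaining ingredients---triviality of bundles over a contractible base, K\"unneth, and the minimal model of an odd sphere---are standard and I do not anticipate any genuine obstacle beyond a careful accounting of odd-degree signs.
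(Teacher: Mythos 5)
Your proposal is correct, and the underlying map is ultimately the same one the paper uses---your composite sends $\eta + \theta\xi$ to $\pi_1^*(\eta + \psi\xi) + \pi_2^*(\mathrm{vol})\wedge\pi_1^*\xi$, which is exactly the paper's map $\eta \oplus \theta\zeta \mapsto \pi^*\eta + \underline{\theta}\wedge\pi^*\zeta$ for the particular global angular form $\underline{\theta} = \pi_1^*\psi + \pi_2^*(\mathrm{vol})$ (note this $\underline\theta$ restricts to a generator on each fiber and satisfies $d\underline{\theta} = \pi^*\omega^{p+1}$, as required). Where you genuinely diverge is in \emph{how} the quasi-isomorphism is certified. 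The paper argues directly on cohomology: surjectivity because the two generators of $H^*(S^{2p+1})$ are visibly hit, and injectivity from the long exact sequence of the mapping cone together with contractibility of $M$. You instead factor the map as an honest cdga \emph{isomorphism} $\phi: \cone(\omega^{p+1}) \to \Omega(M)\tensor S$ (the substitution $\theta \mapsto \theta' + \psi$, whose chain-map and algebra-map checks do go through with the signs as you indicate, using $\psi^2 = \theta'^2 = 0$ and $\psi\theta' = -\theta'\psi$), followed by the standard quasi-isomorphisms $S \to \Omega(S^{2p+1})$ and K\"unneth. Your route buys an explicit isomorphism at the first stage and isolates all the homotopy-theoretic content in textbook facts about odd spheres and products, at the cost of choosing a trivialization $E_p \cong M \times S^{2p+1}$ and a primitive $\psi$ of $\omega^{p+1}$; the paper's route avoids the trivialization and the K\"unneth input but only verifies the map on cohomology. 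Both are complete proofs of the lemma.
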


\begin{proof}[Proof of Lemma.]
When $M$ is contractible, $E_p$ is trivial, and homotopy equivalent to $S^{2p+1}$. Hence we can choose a global $2p+1$-form called $\underline{\theta}$ that realizes a generator for the cohomology of the fiber (and hence of $E_p$). Then we claim that the cdga map
	\eqn\label{eqn.cone-Omega-map}
		\cone(\omega^{p+1}) = \Omega^\bullet(M) \oplus \theta \Omega^{\bullet - 2p-1}(M)  \to \Omega(E_p),
		\qquad
		\eta \oplus \theta \zeta
		\mapsto
		\pi^*\eta + \underline{\theta}\wedge\pi^*\zeta
	\eqnd
exhibits the equivalence. Note it is obviously a surjection on cohomology, as the generators of $H^0(S^{2p+1})$ and $H^{2p+1}(S^{2p+1})$ are hit by
	\eqnn
	1 \oplus 0
	\qquad
	\text{and}
	\qquad
	0 \oplus \theta 1,
	\eqnnd
respectively. (Here, ``1'' is the constant 0-form with value 1.) It's further an injection on cohomology, because the long exact sequence in cohomology for a cone (along with the contractibility of $M$) shows that these are the only possible cohomology generators in $\cone(\omega)$. 
\end{proof}

\begin{proof}[Proof of Theorem~\ref{theorem.appendix}.]
$E_p$ is an oriented sphere bundle, being the sphere bundle associated to a complex vector bundle. We let $\underline{\theta} \in \Omega^{2p+1}(E_p)$ denote a global angular form on $E_p$. This $\theta$ can be chosen to satisfy two properties: (i) $\underline{\theta}$ restricts to a generator of $H^{2p+1}(S^{2p+1})$ on each fiber of $E_p$, and (ii) $d\underline{\theta} = \pi^*(\omega^{p+1})$. Note property (ii) follows because $\omega^{p+1}$ is a representative of the Euler class of $E_p$. These are classical facts one can find, for instance, in Bott and Tu~\cite{bott-tu}.

So the same map as in~(\ref{eqn.cone-Omega-map}) is a map of cdgas.

Let $\cV=\{V_\alpha\}$ be a good cover of $M$.
If $\pi: E_p \to M$ is the projection map of the fibration, the open sets $U_\alpha = \pi^{-1}(V_\alpha)$ form an open cover of $E_p$, where each non-empty intersection $U_{\alpha_0,\ldots,\alpha_r} := U_{\alpha_0} \cap \ldots \cap U_{\alpha_r}$ is homotopy equivalent to the fiber sphere.
Consider the \v{C}ech-de Rham double complex, which is the $E_0$ page of the spectral sequence given by the usual filtration on on $\Omega(E_p)$.
	\eqnn
	\xymatrix{
	\vdots
		& \vdots
		& \vdots
		&  \\
	\prod_\alpha \Omega^2(U_\alpha) \ar[u]^d \ar[r]^-\delta
		& \prod_{\alpha_0,\alpha_1} \Omega^2(U_{\alpha_0,\alpha_1}) \ar[u]^{-d} \ar[r]^-\delta
		& \prod_{\alpha_0,\alpha_1,\alpha_2} \ar[u]^d\Omega^2(U_{\alpha_0,\alpha_1,\alpha_2}) \ar[r]
		& \ldots \\
	\prod_\alpha \Omega^1(U_\alpha) \ar[u]^d \ar[r]^-\delta
		& \prod_{\alpha_0,\alpha_1} \Omega^1(U_{\alpha_0,\alpha_1}) \ar[u]^{-d} \ar[r]^-\delta
		& \prod_{\alpha_0,\alpha_1,\alpha_2} \ar[u]^d\Omega^1(U_{\alpha_0,\alpha_1,\alpha_2}) \ar[r]
		& \ldots \\
	\prod_\alpha \Omega^0(U_\alpha) \ar[u]^d \ar[r]^-\delta
		& \prod_{\alpha_0,\alpha_1} \Omega^0(U_{\alpha_0,\alpha_1}) \ar[u]^{-d} \ar[r]^-\delta
		& \prod_{\alpha_0,\alpha_1,\alpha_2} \Omega^0(U_{\alpha_0,\alpha_1,\alpha_2}) \ar[r]\ar[u]^d
		& \ldots
	}
	\eqnnd
The advantage of working in the smooth world (and hence using the de Rham model) is that this double complex has a total complex which is a cdga.\footnote{As opposed to having to keep track of $E_\infty$ structures on singular cochains.} Moreover, the usual augmentation from $\Omega^\bullet(E_p)$ gives a map of cdgas from $\Omega(E_p)$ to the total complex, and this is an equivalence of cdgas. (For instance, each row of this $E_0$ page is exact by the existence of partitions of unity. This is one classical way to see that the \v{C}ech-de Rham double complex computes the de Rham cohomology algebra of the total space.)

Now we note that the map of cdgas~(\ref{eqn.cone-Omega-map}) is local---it is compatible with restriction maps. Hence~(\ref{eqn.cone-Omega-map}) induces a map of double complexes. For the sake of having a name, we will call the domain double complex the ``$V$ double complex.'' At the $r$th column and $s$th row, the map from the $V$ double complex to the \v{C}ech-de Rham complex is given by
	\eqnn
		\Omega^s(V_{\alpha_0,\ldots,\alpha_r}) \oplus \theta \Omega^{s-2p-1}(V_{\alpha_0,\ldots,\alpha_r})
		\to \Omega^s(U_{\alpha_0,\ldots,\alpha_r}),
		\qquad
		\eta \oplus \theta \zeta \mapsto \pi^*\eta + \underline{\theta}\wedge\pi^*\zeta.
	\eqnnd
But this is the map used in the Lemma, hence an equivalence of cdgas. Thus the vertical differentials $d$ of the $V$ double complex compute the same $E_1$ page of the spectral sequence associated to the \v{C}ech-de Rham double complex. Importantly, the horizontal differentials of the $E_2$ page are also equal---this is where one uses the existence of the global angular form. Specifically, the horizontal \v{C}ech complexes have compatible differentials precisely because a single global form restricts to the generators along each open set.

The last (and only other) page at which one has a differential is the $E_{2p+2}$ page, where the differential is precisely given by the map called ``wedge with $\omega^{p+1}$.'' (This is by definition of the differential of $\cone(\omega^{p+1})$, which leads to this differential in the $V$ double complex, and by a well-known property of the Euler form for the \v{C}ech-de Rham double complex.)

This completes the proof.
\end{proof}

\printindex

\bibliographystyle{amsalpha}
\bibliography{bib}

\end{document}